\definecolor{mygreen}{RGB}{28,172,0} 
\definecolor{mylilas}{RGB}{170,55,241}
\theoremstyle{section}
\newtheorem{condition}{Condition}
\newtheorem{definition}{Definition}
\newtheorem{theorem}{Theorem}
\newtheorem{assumption}{Assumption}
\newtheorem{corollary}{Corollary}[theorem]
\newtheorem{lemma}[theorem]{Lemma}
\newtheorem{proposition}{Proposition}
\newtheorem{remark}{Remark}
\DeclarePairedDelimiter\abs{\lvert}{\rvert}%
\DeclarePairedDelimiter\norm{\lVert}{\rVert}%
\let\oldabs\abs
\def\abs{\@ifstar{\oldabs}{\oldabs*}}
\let\oldnorm\norm
\def\norm{\@ifstar{\oldnorm}{\oldnorm*}}
\newcommand{\mcI}{\mathcal{I}}
\newcommand{\mcG}{\mathcal{G}}
\newcommand{\mcR}{\mathcal{R}}
\newcommand{\mcD}{\mathcal{D}}
\newcommand{\mcM}{\mathcal{M}}
\newcommand{\HH}{\mathbb{H}}
\newcommand{\R}{\mathbb{R}}
\newcommand{\C}{\mathbb{C}}
\newcommand{\D}{\mathbb{D}}
\newcommand{\N}{\mathbb{N}}
\newcommand{\Z}{\mathbb{Z}}
\newcommand{\K}{\mathbb{K}}
\renewcommand{\P}{\mathbb{P}}
\newcommand{\E}{\mathbb{E}}
\newcommand{\rarrow}{\rightarrow}
\newcommand{\sig}{\sigma}
\newcommand{\eps}{\varepsilon}
\newcommand{\ind}{\mathds{1}}
\newcommand{\var}{\text{Var}}
\newcommand\floor[1]{\lfloor#1\rfloor}
\begin{document}
\title[Airy process at a thin rough region between frozen and smooth]{Airy process at a thin rough region between frozen and smooth}
\author{Kurt Johansson\textsuperscript{*} and Scott Mason\textsuperscript{$\dagger$}}
\thanks{\textsuperscript{*}Department of Mathematics, KTH Royal Institute of Technology, kurtj@kth.se}
\thanks{\textsuperscript{$\dagger$}Department of Mathematics, KTH Royal Institute of Technology, scottm@kth.se}
\thanks{Supported by the grant KAW 2015.0270 from the Knut and Alice Wallenberg Foundation.}
\maketitle
\begin{abstract}
We show there is a last path at the rough smooth boundary of the two-periodic Aztec diamond with parameter $a\in (0,1)$ that, suitably rescaled, converges to the Airy process, under the condition that $a$ tends to zero as the size of the Aztec diamond tends to infinity at a certain rate. This condition causes the rough region to have a thin, mesoscopic width. We also show that the dimers are described by a discrete Bessel kernel when the width is only of microscopic size.
\end{abstract}

\section{Introduction}We consider a dimer, or random domino tiling model, called the two-periodic Aztec diamond. This is a dimer model on the Aztec diamond graph of size $n$ with a two-periodic weight structure. This model was introduced in \cite{CY} and further studied in \cite{C/J} where a double contour integral formula for the inverse Kasteleyn matrix was given. The model has
two parameters $a$ and $b$ which describe the two-periodicity, see below for the precise definitions. The model is interesting since we can see all three types of possible phases, frozen, rough and smooth simultaneously in different regions. 

We consider a case where the parameters depend on $n$ the size of the Aztec diamond. Specifically, we take $a=n^{-1+\gamma}$, where $\gamma\in(0,1/2)$, and $b=1$. In this case, the width $an=n^{\gamma}$ of the rough region separating the frozen and smooth regions is mesoscopic in size. In the macroscopic limit, when we take the mesh size to be of order $1/n$, we see a frozen region meeting a smooth region. In the surface picture, we have two facets with different slopes meeting together. It is not obvious how to define the microscopic interface between the rough region and the smooth region in the two-periodic Aztec diamond, a problem that is addressed in \cite{BCJ2}. In that paper a path is defined that consists of a sequence of dimers of weight $a$, which we call $a$-\emph{dimers}. This path, the last path, starts at the bottom boundary of the Aztec diamond and ends along the left boundary. If one crosses this path, an associated
height function changes between $n-1$ and $n$. Heuristically, this path is a certain long (of typical length proportional to $n$) connected subset of a level set of the height function. Similarly there are other long paths of $a$-dimers describing height changes. In \cite{BCJ2} a so-called \emph{corridor} height function was defined and these long paths describe where this
height function changes. 

The main result of \cite{BCJ2}  was that the height changes of the corridor height function close to the rough-smooth boundary converges to the counting function in
the Airy kernel point process in a certain weak sense. We prove that when $a=n^{-1+\gamma}$ the positions of the $a$-dimers themselves close to the rough-smooth boundary
converge to the Airy kernel point process. Moreover, it is conjectured in \cite{BCJ2} that the last path converges to the Airy process for any fixed value $a\in (0,1)$. In this paper we prove this conjecture
for $a=n^{-1+\gamma}$, $\gamma\in(0,1/2)$. Although all possible dimer configurations are still possible with this choice of parameters we have a simpler geometric situation with virtually no "loops" in the smooth region (with high probability), i.e. very few $a$-dimers are seen in the smooth region. The smooth region still has some randomness though. In the case $\gamma=0$, where the width of the rough region $an\to 4\nu>0$ remains finite (i.e. has microscopic width in the limit), we see that the point process defined by the positions of the
$a$-dimers is described by the discrete Bessel point process.

The structure of this article is as follows. In section \ref{Defresult}, we recall the definition of the model, the Airy process, the $a$-height function and specify the last path which converges to the Airy process. We then state the main theorem of the paper, Theorem \ref{propmaxpathconvfdd} i.e. convergence of the last path to the Airy process. We then define and give results on several events of high probability which allow us to gain control over the whereabouts of the path. In section \ref{formresults}, we recall formulas from \cite{C/J}, including a double contour integral formula for the correlation kernel of the dimer point process. We then state the convergence of this kernel to the extended-Airy kernel. In section \ref{asympBeps1eps2}, we perform an asymptotic analysis on one of the double contour integrals which appear in the formula for the correlation kernel, showing that it converges to the Airy part of the extended-Airy kernel. At the end of the section we have a remark where we prove convergence to the discrete Bessel kernel mentioned above as a byproduct of the analysis. In section \ref{asympk11inv}, we perform an asymptotic analysis on a single integral which appears in the formula for the correlation kernel, showing that it limits to the Gaussian part of the extended-Airy kernel. In section \ref{backtracksection}, we show that the probability of seeing a backtracking dimer along certain lines goes to zero. In section \ref{aheightsection}, we show that with probability tending to one, the $a$-height function value at a finite collection of points at the top of the scaling window the last path is equal to $n$. We show this via Chebyshev's/Markov's inequality, that is, we compute the limit of the expectation and variance of the height at these points. In section \ref{Misclemmas}, we include some miscellaneous lemmas regarding formula simplification and simple bounds.
\section{Definitions and results}

 \label{Defresult}
 
\subsection{Definition of the model} Consider the subset $V\subset \Z^2$  consisting of black vertices $B$ and white vertices $W$, with $V = W\cup  B$, and
\begin{align*}
W=\{(i,j): i \ \text{mod } 2=1, \ j \ \text{mod } 2=0, 1
\leq i\leq2n-1, 0\leq j\leq 2n\}
\end{align*}
and
\begin{align*}
B=\{(i,j): i \ \text{mod } 2=0, \ j \ \text{mod } 2=1, 1
\leq i\leq2n, 0\leq j \leq 2n-1\}.
\end{align*}
We define the vertex set $V$ as the vertex set of the \emph{Aztec Diamond graph} $AD$ of size $n$ with corresponding edge set given by all $(b,w)$ such that $b-w=\pm \vec{e}_1, \pm\vec{e}_2$ for all $b\in B, w\in W$, where $\vec{e}_1=(1,1)$, $\vec{e}_2=(-1,1)$. We will also sometimes consider edges as the lines they represent in the plane, i.e. an edge is the straight line with endpoints given by its corresponding vertices. For an Aztec Diamond of size $n=4m, m\in \N_{>0}$ define the weight as a function $w$ from the edge set into $\R_{> 0}$ such that the edges contained in the smallest cycle surrounding the point $(i,j)$ where $(i+j)$ mod $4=2$, have weight $a\in(0,\infty)$ and the edges contained in the smallest cycle surrounding the point $(i,j)$ where $(i+j)$ mod $4=0$ have weight $b\in (0,\infty)$. Each of these cycles is the boundary of a face of $AD$ and we call each of these faces an $a$ face ($b$ face) if the edges on its boundary each have weight $a$ ($b$).  The graph $AD$ comes with an orientation on edges which we define as being oriented from white to black vertices.
We divide the white and black vertices into two different types. For $i\in\{0,1\}$,
\begin{align*}
&B_i=\{(x_1,x_2)\in B : x_1+x_2 \text{ mod } 4 = 2i+1\},
\end{align*}
and
\begin{align*}
&W_i= \{(x_1,x_2)\in W : x_1+x_2 \text{ mod } 4=2i+1\}.
\end{align*}\\
\begin{figure}[h]
\centering
\includegraphics[width = 0.5\textwidth]{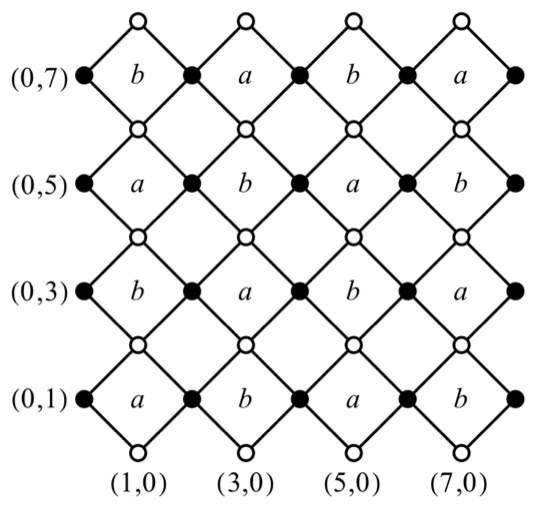}
\caption[Aztec Diamond graph]
	{The two-periodic Aztec Diamond graph for $n=4$ with the $a$ and $b$ faces labelled.}
	\label{AztecDiamondgraphn=4}
\end{figure}

Recall that a dimer configuration is a subset of edges such that every vertex belongs to \emph{exactly} one edge.
Define a probability measure $\P_{Az}$ on the finite set of all dimer configurations $\mcM(AD)$ of $AD$. For a dimer configuration $\omega\in\mcM(AD)$,
\begin{align*}
\P_{Az} (\omega) = \frac{1}{Z}\prod_{e\in \omega}w(e), \qquad \text{where}  \ \ Z=\sum_{\omega\in \mcM(AD)}\prod_{e\in\omega } w(e)
\end{align*}
is the partition function and the product is over all edges in $\omega$.
We call the probability space corresponding to $\P_{Az}$ and $\mcM(AD)$ the \emph{two-periodic Aztec Diamond}, and note that the setup here is the same as in \cite{C/J}.\\
\subsection{Kasteleyn's approach and dimer statistics} The classical approach to analyse the statistical behaviour of random dimer configurations of large bipartite graphs $G$ is to follow an idea introduced by Kasteleyn. In this approach, one puts signs $+1,-1$ (called a Kasteleyn orientation) into a submatrix of the weighted adjacency matrix indexed by $B'\times W'$ for the black and white vertices, $B'$ and $W'$, of $G$. The resulting matrix $K$ is called the \emph{Kasteleyn matrix} and has the property that the partition function of the dimer model is equal to the absolute value of the determinant of $K$. There are many good introductions to the utility of the Kasteleyn matrix, see for example \cite{Tonin}. In general, the Kasteleyn orientation is not unique, and its values need not be restricted to $1,-1$. Here we introduce the Kasteleyn matrix $K_{a,b}$ that we use for the two-periodic Aztec Diamond model of size $n=4m$. Define
\begin{align}
K_{a,b}(x,y)=\begin{cases} a(1-j)+bj & \text{if } y=x+\vec{e}_1, x\in B_{j}\\
i(aj+b(1-j)) & \text{if } y=x+\vec{e}_2, x\in B_{j}\\
aj+b(1-j) & \text{if } y=x-\vec{e}_1, x\in B_{j}\\
i(a(1-j)+bj) & \text{if }  y=x-\vec{e}_2,x\in B_{j}\\
0 & \text{otherwise}
\end{cases}
\end{align}
 where $i=\sqrt{-1}$. For dimers $e_1,...,e_n$, define the $n$-point correlation function
 \begin{align}
 \rho_n(e_1,...,e_n)=\P_{Az}(\omega\in \mcM(AD): e_1,...,e_n\in \omega).
  \end{align}
One can use the determinantal expression of the partition function to show that collections of dimers form a determinantal point process. Indeed, a theorem from Kenyon \cite{K} gives that for $e_i=(b_i,w_i), \ i=1,...,n$, the $n$-point correlation functions are 
\begin{align}
\rho_n(e_1,...,e_n)=\det(L(e_i,e_j))_{i,j=1}^n\label{corrfuncs}
\end{align}
with correlation kernel
\begin{align}
L(e_i,e_j)=K_{a,b}(b_i,w_i)K_{a,b}^{-1}(w_j,b_i).
\label{corrkernel}
\end{align}
In the above, $K_{a,b}^{-1}(w_j,b_i)$ is the inverse of the Kasteleyn matrix $K_{a,b}$ evaluated at $(w_j,b_i)$.
\subsection{The Airy Process}
Let $\Gamma_+=\{te^{i\pi /6}: t\geq 0\}\cup\{te^{5i\pi /6}:t\geq 0\}$ be a curve oriented from right to left and define $\Gamma_-$ to be its reflection in the real line and oriented from right to left.
Define the \emph{extended-Airy kernel} for $\tau,\tau',\xi,\xi'\in \R$ by
\begin{align}
A(\tau,\xi;\tau',\xi')=\tilde{A}(\tau,\xi;\tau',\xi')-\Psi(\tau,\xi;\tau',\xi')\ind_{\tau<\tau'}
\end{align}
where
\begin{align}
\tilde{A}(\tau,\xi;\tau',\xi')=\frac{e^{\frac{1}{3}(\tau'^3-\tau^3)+\tau \xi-\tau'\xi'}}{i(2\pi i)^2}\int_{\Gamma_-}dw\int_{\Gamma_+}dz\frac{e^{\frac{i}{3}z^3-\tau'z^2+iz(\xi'-\tau'^2)}}{e^{\frac{i}{3}w^3-\tau w^2+iw(\xi-\tau^2)}}\frac{1}{z-w}\label{airypart}
\end{align}
and
\begin{align}
\Psi(\tau,\xi;\tau',\xi')=\frac{1}{\sqrt{4\pi(\tau'-\tau)}}\exp\Big( -\frac{(\xi-\xi')^2}{4(\tau'-\tau)}-\frac{1}{2}(\tau'-\tau)(\xi+\xi')+\frac{1}{12}(\tau'-\tau)^3\Big).\label{gaussianpart}
\end{align}
The Airy process $t\rarrow A(t)$ is a stationary process on $\R$ which can be specified by its finite dimensional distributions. For $\xi_i\in \R$, $1\leq i\leq m$ and distinct times $\tau_1,...,\tau_m$ in $\R$, define $f$ on $\{\tau_1,...,\tau_m\}\times \R$ by $f(\tau_j,x)=-\ind_{(\xi_j,\infty)}(x)$ and then
\begin{align}
\P[A(\tau_1)\leq \xi_1,...,A(\tau_m)\leq \xi_m)]=\text{det}(\ind +f^{1/2}Af^{1/2})_{L^2(\{\tau_1,...,\tau_m\}\times \R)}\label{temp12}
\end{align}
where ${L^2(\{\tau_1,...,\tau_m\}\times \R)}$ has reference measure $\lambda \otimes \mu$, where $\lambda$ is the counting measure on $\{\tau_1,...,\tau_m\}$ and $\mu$ is the Lebesgue measure on $\R$. Since $f^{1/2}Af^{1/2}$ is a trace class operator, \eqref{temp12} is a Fredholm determinant of a trace class operator.
\subsection{Geometric definitions}
The definition of the path we consider was put forward in \cite{BCJ2}, we give a summary of relevant definitions in that article.
In \cite{BCJ2}, a squishing procedure is outlined on the two-periodic Aztec Diamond graph where one contracts (or squishes) the $b$ faces whilst expanding the $a$ faces. The resulting graph, which we label $\widetilde{AD}$, consists only of $a$ edges. Due the dimer constraint, \cite{BCJ2} show that $a$-dimers in $\widetilde{AD}$ are either part of double edges, or oriented \emph{loops} or \emph{paths} which we will recall the definitions of.
Dimer models have an associated \emph{height function} interpretation. In our context, a height function is defined on faces of the graph $AD$ and the height function corresponding to a dimer configuration is specified by letting its value to be one at the face (0,0) (the outside face of $AD$) and then defining the following height changes between adjacent faces in $AD$;\\\\
a height change of $+ 3$ $ (-3)$ when traversing across an edge covered by a dimer with the white vertex on the right (left),\\\\
a height change of $+ 1$ $ (-1)$ when traversing across an edge \emph{not} covered by a dimer with the white vertex on the left (right).\\\\

One typically thinks of the presence of a dimer in a configuration as giving a steeper height change (modulus $3$ instead of $1$) of the height function between the two faces incident to the dimer, from this one then sees that the map that sends dimer configurations to their height functions is injective. Next, define the $a$-\emph{height function} to be the restriction of the height function to the $a$ faces. One can check that the height differences of the $a$-height function are always a multiple of $4$, see figure 5 in \cite{BCJ2}.

We define double edges as those edges in the squished graph which are the result of two $a$-dimers contracting to the same edge. \\We define a \emph{loop} of length $k\geq 4$ to be a sequence of distinct $a$-dimers $e_{2j-1}, 1\leq j\leq k$ which are not part of a double edge, and with the following properties:
\begin{enumerate}
\item There are distinct $b$ edges (not covered by dimers) $e_{2i}, 1\leq i \leq k$ such that the $b$ edge $e_{2i}$ shares one endpoint with the $a$-dimer $e_{2i-1}$ and the other endpoint with the $a$-dimer $e_{2i+1}$, $1\leq i \leq k$.
\item The sequence forms a loop in the sense that $e_{-1}=e_{2k-1}$, $e_{2k+1}=e_1$. 
\end{enumerate}
The collection of $e_{j}, 1\leq j\leq 2k$ form a sequence of adjacent edges and alternate between being $a$-dimers and $b$ edges, and they visually form a loop. After the squishing procedure the $b$ edges are contracted and one obtains a loop of $k$ $a$-dimers. The orientation along any $a$-dimer in a loop is given by taking the orientation of the dimer as from its white vertex to its black vertex in the pre-squished graph.
A path is defined the same as a loop but instead of the loop condition $(2)$, the paths start and end somewhere along the boundary of $AD$. Specifically, define a \emph{path} of length $k\geq 1$ to be a sequence of distinct $a$-dimers $e_{2j-1}, 1\leq j\leq k$ which are not part of a double edge, and with the following properties:
\begin{enumerate}
\item There are distinct $b$ edges (not covered by dimers) $e_{2i}, 1\leq i \leq k$ such that the $b$ edge $e_{2i}$ shares one endpoint with the $a$-dimer $e_{2i-1}$ and the other endpoint with the $a$-dimer $e_{2i+1}$, $1\leq i \leq k$.
\item $e_1$ and $e_{2k-1}$ are incident to the outside face of $AD$. 
\end{enumerate}
 
 Note there is an ambiguity in the definition of distinct loops and paths when two or more loops or paths intersect, this is resolved by picking a "mirror" at an arbitrary intersection which distinguishes the loops, see \cite{BCJ2}. We assume the same convention, however this issue will not concern the analysis here. The orientation on paths is given by the same as that on loops (from white to black in the pre-squished graph). When stepping into a counterclockwise loop the a-height function decreases by $4$, when stepping into a clockwise loop the a-height function decreases by $4$. From this it is reasonable to define the contribution of the loops to the $a$-height function $h^{\ell}(f)$ at a face $f$ as 4 times the number of clockwise loops surrounding $f$ minus 4 times the number of counterclockwise loops surrounding $f$. Similarly, when stepping over a path the $a$-height function changes by $\pm4$ depending on the orientation of the path. 

Define $\Gamma_{i,m}$ to be the oriented paths which separate the $a$-height $4i$ and $4i+4$, $0\leq i \leq 2m-1$, this depends on the dimer configuration. Since the height function is monotonic along the boundary faces of $AD$ (decreases from left to right on the top boundary, increases from left to right on the bottom boundary etc.), the paths $\Gamma_{i,m}$ partition the faces of the squished graph $\widetilde{AD}$ into collections of faces called \emph{corridors}. The $i^{th}$ corridor $C_i$ is defined as all $a$ faces in $\widetilde{AD}$ that are bounded between $\Gamma_{i-1,m}$ and $\Gamma_{i,m}$, $0<i\leq 2m-1$, $C_0$ is all faces in $\widetilde{AD}$ bounded between $\Gamma_{0,m}$ and the boundary of $\widetilde{AD}$ and  $C_{2m}$ is all faces in $\widetilde{AD}$ bounded between $\Gamma_{2m-1,m}$ and the boundary of $\widetilde{AD}$.
For a face  $f$ in $C_i$, define the contribution of the paths (the "corridor height" in \cite{BCJ2}) to the $a$-height function $h^c(f)$ as equal to $4i$. Since every $a$-dimer is a double edge, loop or path, one can argue that the $a$-height function, $h^a$, is just the sum of the contribution from the loops and paths i.e. $h^a(f)=h^\ell(f)+h^c(f)$. 

The paths $\Gamma_{i,m}$ split into two parts $\Gamma_{i,m}^t, \Gamma_{i,m}^b$ which start at the top and bottom boundaries respectively, the path $\Gamma_{i,m}^b$ is the path mentioned in the introduction, and is the path we consider in this article. In \cite{BCJ2}, the authors expect that for all fixed $a\in(0,1)$ there is an $i$ close to $m$ such that $\Gamma_{i,m}^b$ converges to an Airy process in a suitable rescaling. In this article, we will show that precisely the $i=m-1$ (suitably rescaled) path $\Gamma_{m-1,m}^b$ converges in fdd to the Airy process when we take the parameter $a=a(n)$ to zero with $n$ at certain rates.
\subsection{Dimer coordinates and a piecewise continuous function constructed out of $\Gamma_{m,m-1}^b$} From here we assume $n$ is large enough that a number of coordinate and parameter definitions below are well-defined.
Fix $\gamma\in(0,1/3)$ and, unless otherwise stated, we let 
\begin{align}
a\sim n^{-1+\gamma},&& b=1\label{definitionofa}
\end{align}
for the remainder of the article. 
 To make use of previous results in \cite{C/J} we also use the notation $c=a/(1+a^2)\in(0,1/2)$. This particular weighting on $a$ and $b$ has the effect of shrinking the rough region in the mesh limit, see (Figure). We will investigate the asymptotics of the correlation kernel restricted to the $a$-dimers in the following region. As in \cite{C/J} we consider the bottom left quadrant of $AD$. Let $-1<\xi<0$ so that $(n(1+\xi),n(1+\xi))$ is the coordinate varying over the diagonal of the bottom left quadrant of the Aztec Diamond. 
As in \cite{C/J}, we will use $(n(1+\xi_c), n(1+\xi_c))$ as a reference point at the rough-smooth boundary. We want it to have integer coordinates, i.e. we assume that 
 \begin{align*}
n(1+\xi_c) =\max\{2\N_{\geq 0}\cap [n(1-\frac{1}{2}\sqrt{1+2c},n(1-\frac{1}{2}\sqrt{1-2c})]\}.
\end{align*}
From \cite{C/J}, the rough-frozen boundary has as a reference point $n(1+\xi_f)$  where $\xi_f=-\frac{1}{2}\sqrt{1+2c}$, see the discussion following Theorem 2.6 in \cite{C/J}. Hence we see the width of the rough region is $n(1+\xi_c)-n(1+\xi_f)\sim an$.

We will see that since we consider the weight $a$ tending to zero, the Airy-type fluctuations \cite{C/J} of $n^{1/3}$, $n^{2/3}$ in the vertical and horizontal directions respectively are altered, in particular, the fluctuations become of the order
\begin{align}
 p_n=(an)^{1/3} , && q_n=(n^2/a)^{1/3}.\label{definitionofpnqn}
 \end{align}
 Observe that for fixed $a$, $p_n \sim n^{1/3}, q_n\sim n^{2/3}$.
 
 We note that for $a\sim n^{-1}$, we see that the a-dimers have \emph{Bessel}-type fluctuations, see remark \ref{BesselKernel}.
 \begin{definition} Let $S$ be the interior of the box with corners placed at the four points
\begin{align}
&(n(1+\xi_c)+1)\vec{e}_1\pm\alpha_{\pm}  p_n\vec{e}_1\pm\beta q_n\vec{e}_2,\quad(n(1+\xi_c)+1)\vec{e}_1\mp\alpha_{\mp}  p_n\vec{e}_1\pm\beta q_n\vec{e}_2\nonumber\\
&\text{where }(\alpha_{\pm} p_n,\beta q_n)\in (2\mathbb{Z})^2,\quad \text{$\alpha_-=\beta>>1$ is large but fixed, $\alpha_+ =\beta\log(n)$} \nonumber.
\end{align}
\end{definition}
Set $\alpha:=\alpha_+=\beta\log(n)$. One sees that the corners are placed at the centre of $a$ faces hence $S$ contains the same $a$ edges of $AD$ before and after the squishing procedure. We take the condition  $\alpha=\beta\log(n)$, since we will prove and use the fact that the variance of the height function along the top boundary of the box $S$ (where the "top" is furthest boundary in the $\vec{e}_1$ direction) goes to zero. This would not happen if we took $\alpha $ fixed. We also single out the "top" boundary line of $S$, define $\partial S^T$ to be the straight line with endpoints 
\begin{align}
(n(1+\xi_c)+1)\vec{e}_1+\alpha  p_n\vec{e}_1\pm\beta q_n\vec{e}_2, \nonumber
\end{align}
the line $\partial S^T$ intersects $a$ faces in the squished graph, it does not intersect $b$ faces.
 \begin{condition}
Let $j$ lie in a finite set $J$. For any $a$-dimer $e_j=(x,y)\in W_{\eps_1}\times B_{\eps_2}$ in the pre-squished graph $AD$, with $x=(x_1,x_2)$ and $y=(y_1,y_2)$ let
 \begin{align}
 x=(n(1+\xi_c)+1+\alpha_jp_n)\vec{e}_1+\beta_j q_n\vec{e}_2+(0,2\eps_1-1)\label{xlocs},\\
 y=(n(1+\xi_c)+1+\alpha_jp_n)\vec{e}_1+\beta_j q_n\vec{e}_2+(2\eps_2-1,0)\label{ylocs}
 \end{align}
 where 
\begin{align}
(\alpha_j p_n,\beta_j q_n)\in (2\Z)^2, &&\text{and } \quad\quad|\beta_j|\leq \beta,\quad -\beta\leq \alpha_j\leq \beta\log(n).
\label{alphabetarestric}
\end{align}
\label{cond-a-dimers}
 \end{condition}
 We think of $\Gamma_{m-1,m}^b$ as a path or a sequence of $a$-dimers in the squished graph. 
 Consider $\Gamma_{m-1,m}^b$ in the squished graph intersected with a line $t\vec{e}_2+\R \vec{e}_1$, $t\in \Z$. 
  If we look at a configuration such that $\Gamma_{m-1,m}^b$, viewed as a path, leaves a point along this line and moves back around to the same line we see it may intersect a line many times. If we want to think of a statistic of $\Gamma_{m-1,m}^b$ which is a random function converging to the Airy process, a reasonable definition would be to just look at the points furtherest along given lines of the form $t\vec{e}_2+\R \vec{e}_1$ and then make it piecewise constant. However, first we try to look at the furtherest points along $t\vec{e}_2+\R \vec{e}_1$ that are still \emph{below} $\partial S^T$. We use this as a statistic and then show that in fact these points are really the only points of $\Gamma_{m-1,m}^b$ on the lines $t\vec{e}_2+\R \vec{e}_1$ (up to an event with probability going to zero). 
This all motivates the following definition of a piecewise continuous function constructed out of each realisation of $\Gamma_{m,m-1}^b$.
Observe that for $x\in \R\setminus(2\Z+1)$,
\begin{align} 2\floor{\frac{x-1}{2}}+2\in 2\Z\label{templbiald}
\end{align} is the nearest even number to $x$. For $x\in 2\Z+1$, \eqref{templbiald} is the nearest even number greater than $x$.
\begin{definition}
For $t$ such that $tq_n\in[-\beta q_n,\beta q_n]\cap 2\Z$, let $\Gamma(t)$ be the largest $i\in (-\infty,\alpha ]$ such that $\Gamma_{m-1,m}^b$ intersects $(n(1+\xi_c)+ip_n) \vec{e}_1+t q_n\vec{e}_2$ in the \emph{squished} graph $\widetilde{AD}$. If there is no such $i$, define $\Gamma(t)$ to be the largest $i\in 2\Z/p_n$ such that $\Gamma_{m-1,m}^b$ intersects $(n(1+\xi_c)+ip_n) \vec{e}_1+t q_n\vec{e}_2$ in the \emph{squished} graph $\widetilde{AD}$. Extend $\Gamma(t)$ to the rest of the values of $t\in [-\beta ,\beta]\setminus 2\Z/q_n$ by making it piecewise constant on each interval $tq_n\in[-\beta q_n,\beta q_n]\cap [j-1,j+1)$, $j\in 2\Z$ i.e. 
\begin{align}
\Gamma(t)=\Gamma((2\floor{\frac{tq_n-1}{2}}+2)/q_n)
\end{align} for $-\beta <t<\beta $.
\end{definition} 
We now state the main theorem of this article.
\begin{theorem}
For a collection of fixed coordinates $(t_1,\xi_1),...,(t_j,\xi_j)\in \R^2$ such that $t_1<...<t_j$, then
\begin{align}
\lim_n \P(\Gamma(t_1)\leq \xi_1,...,\Gamma(t_j)\leq \xi_j)= \P(A(t_1)\leq \xi_1+t_1^2,...,A(t_j)\leq \xi_j+t_j^2).
\label{maxpathconvfdd}
\end{align}
where $A(t)$ is the Airy process. 
\label{propmaxpathconvfdd}
\end{theorem}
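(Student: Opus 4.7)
The plan is to reformulate the event $\{\Gamma(t_1)\leq \xi_1,\ldots,\Gamma(t_j)\leq \xi_j\}$ as a gap probability for the $a$-dimer determinantal point process inside the box $S$, and then pass to the limit using the correlation kernel convergence to the extended-Airy kernel established in Sections \ref{formresults}--\ref{asympk11inv}. First I would work on a high-probability event $\mcH_n$ consisting of (i) the $a$-height function attaining the value $n$ at each of the top-boundary points $(n(1+\xi_c)+1+\alpha p_n)\vec{e}_1+t_k q_n\vec{e}_2$ (from Section \ref{aheightsection}), and (ii) no backtracking $a$-dimer occurring along the vertical lines $\{t_k q_n\vec{e}_2+s\vec{e}_1:s\in\R\}\cap S$ (from Section \ref{backtracksection}). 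On $\mcH_n$, since the $a$-height function must increase from $4(m-1)$ just below $\Gamma_{m-1,m}^b$ to $n=4m$ at the top of the box, and since any loops or stray $a$-dimers capable of producing an alternative height increase are ruled out by the backtracking control together with the ambient scarcity of $a$-dimers in the smooth region, the event $\{\Gamma(t_k)\leq \xi_k\}$ becomes equivalent to the purely local statement that no $a$-dimer occupies any of the discrete positions $(n(1+\xi_c)+1+ip_n)\vec{e}_1+t_k q_n\vec{e}_2$ with $\xi_k<i\leq \alpha$ and $ip_n\in 2\Z$.

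Once we have the gap-event formulation, by the determinantal formulas \eqref{corrfuncs}--\eqref{corrkernel} and inclusion--exclusion, the probability of the intersection over $k$ of these gap events equals a Fredholm determinant $\det(\ind - L\chi_{U_n})$ over the finite discrete set
\[
U_n=\bigcup_{k=1}^{j}\{(t_k,i):\xi_k<i\leq \alpha,\ ip_n\in 2\Z\},
\]
with the dimer types $(\eps_1,\eps_2)$ chosen to match $a$-edges as in Condition \ref{cond-a-dimers}. The pointwise convergence of the rescaled kernel $p_n L$ to the extended-Airy kernel $A(\tau,\xi;\tau',\xi')$ -- the Airy part $\tilde{A}$ coming from the double integral analysed in Section \ref{asympBeps1eps2}, the Gaussian part $\Psi$ from the single integral analysed in Section \ref{asympk11inv} -- combined with uniform Gaussian-type upper bounds on $L$ in the rescaled vertical variable, turns the discrete Fredholm determinant into a Riemann sum converging to the Fredholm determinant of $A$ on $\bigcup_k \{t_k\}\times(\xi_k+t_k^2,\infty)$. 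By \eqref{temp12} this equals the right-hand side of \eqref{maxpathconvfdd}; the shift $\xi_k+t_k^2$ reflects the fact that the rough--smooth arctic curve is locally parabolic at the reference point $(n(1+\xi_c),n(1+\xi_c))$, so that the natural reparametrisation $\xi\mapsto\xi+t^2$ of the vertical coordinate is what brings the limiting kernel into the canonical extended-Airy form.

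The hard part is the geometric reduction in the first step: the last path $\Gamma_{m-1,m}^b$ is not, a priori, a local function of the $a$-dimers near a single vertical column, since its identity depends on the global loop and path structure. Both ingredients of $\mcH_n$ are essential in tandem: the height-function constraint from Section \ref{aheightsection} pins down the vertical level of the separating path at each selected $t_k$, while the no-backtracking event from Section \ref{backtracksection} ensures that any $a$-dimer above $\xi_k p_n$ on column $t_k$ belongs to the last path rather than to an extraneous short loop or to a neighbouring path $\Gamma_{i,m}^b$ with $i\neq m-1$. A closely related technical obstacle is the tail truncation as $\alpha=\beta\log(n)\to\infty$: the number of lattice sites in $U_n$ grows like $\log n$, so the uniform decay of the limiting kernel in the upper tail must beat this logarithmic factor. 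This is precisely why the vertical extent of $S$ is chosen as $\beta\log(n)p_n$ rather than fixed -- so that the variance estimate in Section \ref{aheightsection} vanishes, closing the Chebyshev argument for part (i) of $\mcH_n$, while the Airy-type super-exponential decay of $\tilde A$ keeps the Fredholm tail sum under control.
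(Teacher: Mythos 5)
Your overall architecture matches the paper's: restrict attention to a high--probability event on which the last--path cylinder set reduces to a local gap event for $a$-dimers, then pass to the limit via convergence of the rescaled correlation kernel to the extended-Airy kernel and continuity of the Fredholm determinant. The Fredholm/kernel half of the argument is correctly sketched, including the role of $\alpha = \beta\log n$ and the Gaussian tail control needed to beat the $\log n$ lattice growth. However, the geometric reduction — which you yourself flag as ``the hard part'' — is stated as a conclusion rather than proved, and as stated it has gaps.

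First, your high-probability event $\mcH_n$ has only two ingredients, but a third is required: the absence of loops intersecting the relevant columns. You fold this into ``ambient scarcity of $a$-dimers in the smooth region,'' but loop suppression is a separate \emph{probabilistic} input, proved by a Peierls-type estimate (Lemma~\ref{loopsbound}); it is not a deterministic consequence of the backtracking control, and for any fixed $n$ loops do occur with positive probability. Second, and more importantly, your backtracking control is taken only along the segments of the vertical lines inside the box $S$. The paper needs the no-backtracking event along the full lines $\gamma_{t_i}'$ down to the boundary of $AD$ \emph{and} along the anti-diagonal $\gamma=\gamma_1^-\cup\gamma_1^+$. This stronger control is what powers the topological argument in Lemma~\ref{controloverGamma}: the set $\gamma_{t_1}'\cup\gamma$ partitions the Aztec diamond into components, and the orientation constraint from ``no backtracking'' forbids $\Gamma_{m-1,m}^t$ (the portion of the $4(m-1)/4m$ level curve emanating from the top boundary) from re-entering the component containing the tested column. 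Without it you cannot conclude that a detected height change on a column is caused by $\Gamma_{m-1,m}^b$ rather than $\Gamma_{m-1,m}^t$. Your explanation (``a neighbouring path $\Gamma_{i,m}^b$ with $i\neq m-1$'') misidentifies the obstruction: those are already excluded by fixing the $a$-height to $4m$ at the top of $S$; the issue is the same level curve's top piece, not a different level curve. Finally, a minor point: the paper then restricts to $W_0\times B_0$ dimers (Corollary~\ref{pathsprobablitygapsw0b0}) since $W_1\times B_1$ $a$-dimers on those columns are themselves backtracking; your proposal does not mention the type restriction, though it is what keeps Condition~\ref{cond-a-dimers} and the kernel asymptotics clean.
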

\subsection{Gaps of $a$-dimers and events of high probability}
In order to prove Theorem \ref{propmaxpathconvfdd}  we control the cylinder sets appearing on the left hand side of \eqref{maxpathconvfdd} by rewriting them in terms of gap events of $a$-dimers and other events whose complements have probability zero in the limit $n\rarrow \infty$.
In particular, for a given $(t_i,\xi_i)$, $1\leq i \leq j$, define the gap event of seeing no $a$-dimers along specific lines,
\begin{align}
&\{\text{no $a$-dimer above $(t_i,\xi_i)$}\}\nonumber\\&:=\{\text{no $a$-dimer in the squished graph intersects the lines}
\nonumber\\&\qquad\qquad \{(t_iq_n \vec{e}_2+(n(1+\xi_c)+(\xi_ip_n,\infty)\vec{e}_1))\cap S, 1\leq i \leq j\}\}.\nonumber
\end{align}
For a set of $j$ points $(t_1,...,t_j)\in \R^j$, define $\partial S^T_{\{t\}_j}$ to be the subset of faces along $\partial S^T$ given by 
\begin{align}
\{(n(1+\xi_c)+1+\alpha p_n)\vec{e}_1+t_iq_n\vec{e}_2: i\in \{1,...,j\}\}.
\end{align}
We also define the event
\begin{align}
&\{\text{$a$-height along $\partial S^T_{\{t\}_j}$ $=4m$}\}\nonumber\\&:=\{\text{the $a$-height function values at faces $\partial S^T_{\{t\}_j}$ are all equal to $4m$}\}.\nonumber
\end{align}

Heuristically, since $\Gamma_{m-1,m}^b$ is related to the level curves of the $a$-height function, on configurations where the above event holds, we expect the path $\Gamma_{m-1,m}^b$ will be encountered when travelling from $\partial S^T$ to the bottom left corner of the Aztec diamond (we will need to discount the possibility of encountering $\Gamma_{m-1,m}^t$). Finally, we give two events regarding the appearance of loops and double edges in the squished graph. Consider the set of loops $\mcD_\ell$ in $AD$ corresponding to a given dimer configuration, a loop intersects a set of $a$ edges $S'$ if it has an $a$-dimer in common with $S'$. Denote $\ell(\gamma)$ for the length of a loop $\gamma\in \mcD_\ell$. We have the following lemma from \cite{BCJ2},
\begin{lemma}\label{loopsbound}
Let $S'$ be a set of $a$-edges in $AD$, assume $a\in(0,1/3)$. Then
\begin{align}
\P_{Az}(\exists \gamma\in \mcD_\ell \text{ that intersects $S'$ and has length $\ell(\gamma)\geq d$})\leq \frac{|S'|(3a)^d}{(1-3a)}
\end{align}
where $|S'|$ is the cardinality of $S'$.\end{lemma}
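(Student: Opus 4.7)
The plan is to combine a union bound with a weight-flipping argument that bounds the probability of any specific potential loop by $a^{\ell(\gamma)}$, together with a combinatorial count of the number of potential loops of given length through a fixed edge. Since any loop intersecting $S'$ contains at least one $a$-dimer in $S'$, a union bound gives
\[
\P_{Az}\bigl(\exists\, \gamma \in \mcD_\ell : \gamma \cap S' \neq \emptyset,\; \ell(\gamma) \geq d\bigr) \leq \sum_{e \in S'} \sum_{k \geq d} N_k(e) \cdot \max_{\substack{\gamma \ni e \\ \ell(\gamma) = k}} \P_{Az}(\gamma \text{ appears as a loop}),
\]
where $N_k(e)$ denotes the number of distinct potential loops of length $k$ through $e$.

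For the single-loop estimate I would use a weight-flipping bijection. For a fixed potential loop $\gamma$ of length $k$ with $a$-dimers $e_1, e_3, \ldots, e_{2k-1}$ and alternating uncovered $b$-edges $e_2, e_4, \ldots, e_{2k}$, the map
\[
\omega \longmapsto \omega' := (\omega \setminus \{e_1, e_3, \ldots, e_{2k-1}\}) \cup \{e_2, e_4, \ldots, e_{2k}\}
\]
is an injection from $\{\omega \in \mcM(AD) : \gamma \text{ appears as a loop in } \omega\}$ into $\mcM(AD)$, because flipping covered and uncovered edges along the closed alternating cycle $\gamma$ produces a valid dimer cover. The weight ratio is $w(\omega)/w(\omega') = (a/b)^k = a^k$ (using $b=1$), and summing over $\omega$ then dividing by $Z$ gives $\P_{Az}(\gamma \text{ appears as a loop}) \leq a^k$.

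To count $N_k(e)$, I would walk around a potential loop starting from $e$: the alternating $a, b, a, b, \ldots$ constraint, combined with the local geometry of the two-periodic Aztec diamond (each interior vertex has two $a$-edges and two $b$-edges), restricts the number of admissible continuations, and a careful analysis yields $N_k(e) \leq 3^k$. Assembling the pieces,
\[
\P_{Az}(\ldots) \leq |S'| \sum_{k \geq d} 3^k a^k = \frac{|S'|(3a)^d}{1-3a},
\]
the geometric series being finite precisely when $a \in (0, 1/3)$.

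The main obstacle is the combinatorial bound $N_k(e) \leq 3^k$: a naive bookkeeping (two $b$-edges at one endpoint followed by two $a$-edges at the next vertex) only yields $\leq 4^k$, so shaving one factor requires exploiting either the distinctness (self-avoidance) condition in the loop definition, or the specific structure of the two-periodic weight pattern that forbids certain continuations after a given step. The flipping step and the union bound are routine; everything depends on getting the right constant in front of $a$.
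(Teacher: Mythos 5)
Your overall strategy---union bound over a base edge and loop lengths, a weight-flipping injection to bound $\P_{Az}(\gamma\text{ appears})\le a^{\ell(\gamma)}$, and a geometric series---is exactly the Peierls-type argument that the paper attributes to \cite{BCJ2} (the paper itself states the lemma without proof). The flipping step is sound: along a closed alternating cycle of covered $a$-edges and uncovered $b$-edges, swapping covered and uncovered produces a valid matching, the map is injective, and with $b=1$ the weight ratio is $a^{k}$. The union bound is also fine.

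The one real gap you flag---why the base of the geometric count is $3$ rather than $4$---is resolved by the ``not part of a double edge'' clause in the definition of a loop, not by self-avoidance alone. Here is the mechanism. Fix $e_{2i-1}$ with outgoing endpoint $v$. The potential continuations are: pick one of the two $b$-edges $e_{2i}=(v,w)$ at $v$, then one of the two $a$-edges at $w$, so $2\times 2=4$ in total. Now recall the squishing: each vertex lies on exactly one $b$-face, and contracting that face merges its four vertices; each $a$-edge has a unique \emph{parallel} $a$-edge which contracts to the same edge of $\widetilde{AD}$ (these are precisely the pairs that form double edges when both are dimers). The parallel edge of $e_{2i-1}$ has its $V$-side endpoint at one of the two $b$-neighbours of $v$ on the $b$-face of $v$---i.e.\ at one of the two possible $w$'s---and is one of the two $a$-edges there. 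So exactly one of the four naive continuations is the parallel of $e_{2i-1}$. Choosing it would make $e_{2i-1}$ and $e_{2i+1}$ a double-edge pair with both edges dimers, which the loop definition forbids. Hence there are at most $3$ admissible continuations per step, giving $N_k(e)\le 2\cdot 3^{k-1}$ (the factor $2$ for the choice of starting direction, which actually yields the slightly better constant $\tfrac{2}{3}\cdot\frac{(3a)^d}{1-3a}\le\frac{(3a)^d}{1-3a}$). With this in hand your argument is complete and matches the intended proof.
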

A similar lemma holds for double edges, which we now recall.
Let $\mcD_e$ be the set of all sequences of distinct edges $\gamma=(e_1,...,e_{2k})$ such that
\begin{enumerate}
\item $e_i$ shares endpoints with $e_{i-1}$ and $e_{i+1}$ for $0\leq i\leq 2k$ with $e_0=e_{2k}$ and $e_{2k+1}=e_1$,
\item $e_{2i+1}$ are $a$ edges while $e_{2i+2}$ are $b$ edges for $0\leq i \leq k-1$,
\item the pairs $(e_{2i+1},e_{2k-(2i+1)})$ form double edges after the squishing procedure for all $0\leq i \leq k-1$,
\item $\gamma$ is not incident to any other double edges.
\end{enumerate}
Let $\ell_e(\gamma)$ be the number of $a$-dimers in a given $\gamma\in \mcD_e$.
\begin{lemma}
Let $S'$ be a set of $a$-edges in $AD$, assume $a\in(0,1/3)$. Then
\begin{align}
\P_{Az}(\exists \gamma\in \mcD_e \text{ that intersects $S'$ and has length $\ell(\gamma)\geq d$})\leq \frac{|S'|(3a)^d}{(1-3a)}
\end{align}
where $|S'|$ is the cardinality of $S'$.
\end{lemma}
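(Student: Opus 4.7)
The plan is to mirror the proof of Lemma \ref{loopsbound}, combining a cycle-flipping estimate of $\P_{Az}(\gamma\subset\omega)$ with a combinatorial count of admissible $\gamma\in\mcD_e$ through a fixed $a$-edge of $S'$.

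For a fixed $\gamma=(e_1,\dots,e_{2k})\in\mcD_e$ with $\ell_e(\gamma)=k$, I would first show $\P_{Az}(\gamma\subset\omega)\leq a^k$. Note that if two $a$-edges of $\gamma$ shared a vertex in $AD$, then the dimer constraint would force $\P_{Az}(\gamma\subset\omega)=0$, so I may assume $\gamma$ is a simple $2k$-cycle in $AD$. Given any $\omega$ containing the $a$-dimers $e_1,e_3,\dots,e_{2k-1}$, define $\omega'$ by removing these $k$ $a$-dimers and inserting in their place the $k$ $b$-edges $e_2,e_4,\dots,e_{2k}$; the cyclic adjacency condition (1) of $\mcD_e$ ensures $\omega'$ is again a perfect matching of $AD$, and $w(\omega')/w(\omega)=(b/a)^k=a^{-k}$ since $b=1$. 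The map $\omega\mapsto\omega'$ is injective, so summing against the partition function yields $\P_{Az}(\gamma\subset\omega)\leq a^k$, exactly as in the loops case.

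Next, I would bound the number of $\gamma\in\mcD_e$ of length $k$ that intersect $S'$ by $|S'|\cdot 3^{k-1}$, using the same step-by-step walk count as in the proof of Lemma \ref{loopsbound}: fix an $a$-edge $e\in S'$ that lies on $\gamma$, and walk around the cyclic alternating sequence starting from $e$; each new edge admits at most three choices, namely the edges incident to the current vertex other than the one just traversed. Conditions (3) and (4) of the definition of $\mcD_e$ impose additional restrictions relative to the loops case, so the same bound holds a fortiori.

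Combining these two ingredients via a union bound over $k\geq d$ and summing the geometric series (valid because $3a<1$) then gives
\begin{align*}
\P_{Az}\bigl(\exists \gamma\in\mcD_e\text{ that intersects }S'\text{ and has length }\ell_e(\gamma)\geq d\bigr)\leq \sum_{k\geq d}|S'|\cdot 3^{k-1}\cdot a^k\leq \frac{|S'|(3a)^d}{1-3a}.
\end{align*}
The main potential obstacle is ensuring the cycle-flip is a valid local modification for double-edge patterns rather than genuine loops; but this is exactly what conditions (1) and (2) supply, since they are imposed in $AD$ itself. The squishing identifications in (3) do not create coincidences among the $2k$ distinct edges of $\gamma$ in $AD$, so removing the $k$ $a$-dimers and adding the $k$ $b$-edges of $\gamma$ yields a bona fide perfect matching of $AD$ with the stated weight ratio.
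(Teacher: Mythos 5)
Your approach matches what the paper does: the paper gives no proof of this lemma beyond noting that it (and the preceding loop lemma) follows by a Peierls-type argument, deferring to \cite{BCJ2}, and your proposal reconstructs that Peierls argument. The cycle-flip estimate is correct and well explained: the observation that if all the $a$-edges of $\gamma$ lie in $\omega$ then $\gamma$ must in fact be a vertex-simple alternating $2k$-cycle (otherwise some vertex of $\gamma$ would carry two $a$-dimers, violating the matching constraint) is exactly the point needed to make the flip well defined, and together with $b=1$ and the injectivity of the flip it yields $\P_{Az}(\gamma\subset\omega)\leq a^k$.

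The weak spot is the combinatorial count. As you describe it — "walk around the cyclic alternating sequence starting from $e$; each new edge admits at most three choices" — you are walking over all $2k$ edges in $AD$ with $3$ choices per step, which gives $|S'|\cdot 3^{2k-1}$ structures of length $k$. That count is too large: $\sum_{k\geq d} 3^{2k-1}a^k$ only converges for $a<1/9$ and does not yield the stated $(3a)^d/(1-3a)$. To recover the claimed $3^{k-1}$ one must count at the level of the $a$-edges, i.e.\ in the squished graph $\widetilde{AD}$: fixing the starting $a$-edge in $S'$, each transition from one $a$-edge of $\gamma$ to the next (through a contracted $b$-face, i.e.\ choosing the connecting $b$-edge and the outgoing $a$-edge together) admits at most $3$ valid continuations. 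That is what the loop argument in \cite{BCJ2} actually counts, and your invocation of conditions (3) and (4) of $\mcD_e$ "a fortiori" is fine once the base count is stated correctly; indeed (3) determines the "return" half of the walk from the "outgoing" half, so the true count for $\mcD_e$ is even smaller. The rest of the proposal (union bound, geometric series) is then correct.
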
 The previous two lemmas are proved via a Peierls-type argument. We will use them to control the appearance of double edges and loops along a finite collection of straight lines in the Aztec diamond. 
From now on, let $S'$ be the collection of $a$ edges intersecting the lines $(-\infty,n(1+\xi_c)+\alpha p_n] \vec{e}_1+t_i q_n\vec{e}_2$, $1\leq i \leq j$.
Define
\begin{align}
\{\text{no loops intersect $S'$}\}=\{\exists \gamma\in \mcD_\ell \text{ that intersects $S'$ and has length $\ell(\gamma)\geq 4$}\}^c.
\end{align}
and
\begin{definition}
A backtracking dimer $e\in B\times W$ is an $\emph{a}$-dimer which has both endpoints in either:\\
 The lower left quadrant ($[0,n]\times [0,n]$) and which is an element of $B_1\times W_1$, or\\
 the upper left quadrant ($[0,n]\times [n,2n]$) and which is an element of $B_1\times W_0$, or\\
 the upper right quadrant ($[n,2n]\times [n,2n]$)  and which is an element of $B_1\times W_1$, or\\
 the bottom right quadrant ($[n,2n]\times [0,n]$)  and which is an element of $B_0\times W_1$,
 or in the box with sides of length $\log(n)q_n$ centred at the point $(n,n)$.
 \end{definition}
Define two paths 
\begin{align}
\gamma_1^-&=(2n-2,0)+t\vec{e}_2, t\in[0,n-1], \label{antidiagpaths}\\
\gamma_1^+&=(2n-2,0)+t\vec{e}_2, t\in[n-1,2n-2]
\end{align} so that $\gamma_1^-\cup \gamma_1^+$ is a straight line across the anti-diagonal of the Aztec diamond (shifted by $-\vec{e}_1$ so that it passes through $a$-faces). Observe that for large $n$ enough that $\log(n)q_n>4$,
\begin{align}
\{\text{$\nexists$ backtracking dimers along }\gamma_1^-\}=\{\text{a-height is monotonically decreasing along }\gamma_1^-\},\\
\{\text{$\nexists$ backtracking dimers along }\gamma_1^+\}=\{\text{a-height is monotonically increasing along }\gamma_1^+\}.
\end{align}

Note that in the above statements, we used the condition that backtracking dimers are all $a$-dimers in the box with sides of length 4 centred at the point $(n,n)$. This is because the anti-diagonal paths cross into the lower left quadrant. 
Define $\gamma$ to be the union of $\gamma_1^-$ and $\gamma_1^+$.

For fixed $t\in \R$, we also define the path $\gamma_t'$ to be the path $\{s\vec{e}_1+t'q_n\vec{e}_2: s\}$ starting at a boundary face in the lower left quadrant and ending at the line $\gamma$, where $t'=(2\floor{\frac{tq_n-1}{2}}+2)/q_n$. 
\begin{lemma}\label{controloverGamma}
Fix $(t_1,..,t_j)\in \R^j$, $(\xi_1,...,\xi_j)\in \R^j$. For $n$ large, 
\begin{align}
&\{\text{no $a$-dimer above $(t_i,\xi_i)$}\}\cap \{\text{$a$-height along $\partial S^T_{\{t\}_j}=4m$}\}\\&\cap \{\text{no loops intersect $S'$}\}\cap\{\text{$\nexists$ backtracking dimers along }\gamma\cup_i\gamma_{t_i}'\}\nonumber\\&\subset \{\Gamma(t_1)\leq \xi_1,...,\Gamma(t_j)\leq \xi_j\}.\nonumber
\end{align}
\begin{proof}
Consider the case $j=1$. Since there is no $a$-dimer intersecting the line $(n(1+\xi_c)+(\xi_i,\alpha ]p_n)\vec{e}_1+t_iq_n\vec{e}_2$, the $a$ height change is zero along this line. Hence the $a$ height is equal to $4m$ at the face $(n(1+\xi_c)+1+\xi_ip_n)\vec{e}_1+t_iq_n\vec{e}_2$. Recall the boundary values of the height function are fixed. The line $X\subset S'$ extending from this face down to the boundary of $AD$ has height $\sim t_iq_n<<m$, hence there must be a height change from $4m-4$ to $4m$ along $X$. The height changes can only come from paths since no loops intersect $X$, and since the height change is from $4m-4$ to $4m$, either  $\Gamma_{m-1,m}^b$ or $\Gamma_{m-1,m}^t$ passes through $X$.  We now need to discount the latter possibility.  

Recall paths have an orientation, whereby travelling from one vertex to another in the squished graph, the lower height is on the left and the higher height is on the right.
Recall the path $\Gamma_{m-1,m}^t$ starts at the \emph{top} boundary (and ends at either the left or right boundary).

Assume that an $a$-dimer $e$ of $\Gamma_{m-1,m}^t$ intersects $X(\subset \gamma_{t_1}')$. The orientation of the path whilst crossing $X$ must be in the direction $\vec{e}_2$ (i.e. travelling from white to black vertices), as there are no backtracking dimers on $X$. The white vertex of $e$ is then the endpoint of the subsection $\Gamma_e$ of $\Gamma_{m-1,m}^t$ starting at the top boundary. The path $\gamma_{t_1}'\cup\gamma$ partitions $AD$ into  three components, label the component containing the white vertex of $e$ by $R$. By connectedness, the path $\Gamma_e$ must pass through $\gamma_{t_1}'$ or $\gamma$, as the path begins in $R^c$. Moreover, it must pass \emph{into} $R$. This is a contradiction, as it easy to check that the no backtracking condition only allows paths to leave $R$ (when $\log(n)> |t_1|$). Hence $\Gamma_{m-1,m}^b$ passes through $X$. The result follows by the definition of $\Gamma(t_1)$. The extension to arbitrary $j$ follows similarly.
\end{proof}
\end{lemma}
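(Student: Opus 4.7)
The plan is to reduce each individual event $\{\Gamma(t_i)\leq \xi_i\}$ to a statement about which level path of the $a$-height function crosses a prescribed vertical line segment, and then use the no-backtracking and no-loops events together with a topological argument to identify that path as $\Gamma_{m-1,m}^b$. I would first handle $j=1$ and then observe that the argument extends to finite collections.

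For $j=1$, I would fix the horizontal segment $L$ inside $S$ running from the face at $(n(1+\xi_c)+1+\xi_1 p_n)\vec{e}_1+t_1 q_n\vec{e}_2$ out to $\partial S^T_{\{t\}_1}$. By the gap event, no $a$-dimer of the configuration crosses $L$, so the $a$-height function is constant along $L$. Combined with the event $\{\text{$a$-height along }\partial S^T_{\{t\}_1}=4m\}$, this gives that the $a$-height equals $4m$ at the inner endpoint of $L$. Next I would follow $\gamma_{t_1}'$ from that endpoint down to the anti-diagonal $\gamma$; the boundary values of the height function force the $a$-height to be much smaller than $4m$ on the opposite side, so along $\gamma_{t_1}'$ the $a$-height must drop from $4m$ to a value $\ll 4m$. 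By the no-loops event, all such height changes along $\gamma_{t_1}'$ are contributed by the corridor paths $\Gamma_{k,m}$, and the specific change from $4m$ to $4m-4$ must come from either $\Gamma_{m-1,m}^b$ or $\Gamma_{m-1,m}^t$.

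The main obstacle is to rule out $\Gamma_{m-1,m}^t$. I would argue topologically as follows. The union $\gamma \cup \gamma_{t_1}'$ cuts the Aztec diamond into connected components; let $R$ be the component bounded above by the portion of $\gamma_{t_1}'$ just traversed and on the side containing the point where $\Gamma_{m-1,m}^t$ would need to cross. The path $\Gamma_{m-1,m}^t$ starts at the top boundary, which lies outside $R$, so in order to deposit a height change at the relevant crossing of $\gamma_{t_1}'$ it must cross into $R$ through either $\gamma$ or $\gamma_{t_1}'$. The absence of backtracking dimers along $\gamma_1^-\cup\gamma_1^+$ forces the $a$-height to be monotone along each half of $\gamma$, so any crossing of $\gamma$ by $\Gamma_{m-1,m}^t$ is pinned in direction; similarly, the no-backtracking event along $\gamma_{t_1}'$ forces any crossing of $\gamma_{t_1}'$ by an oriented $a$-dimer to have a prescribed direction (white vertex on one specified side). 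The path orientation convention (lower $a$-height on the left, higher on the right) is incompatible with a path that \emph{enters} $R$ from the top boundary under these monotonicity constraints; this produces a contradiction, so the crossing must be made by $\Gamma_{m-1,m}^b$ instead. This yields $\Gamma(t_1)\leq \xi_1$ by the definition of $\Gamma(t_1)$ as the largest index at which $\Gamma_{m-1,m}^b$ meets the horizontal line $t_1q_n\vec{e}_2+\R\vec{e}_1$.

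For general $j$, the same argument applies to each $i$ separately: each event in the intersection involves only $\gamma$ and the single line $\gamma_{t_i}'$, and the topological obstruction to $\Gamma_{m-1,m}^t$ crossing $\gamma_{t_i}'$ is local to that line. I would briefly note monotonicity of the reasoning in $i$ and that the same $4m$-height certificate on $\partial S^T_{\{t\}_j}$ supplies what is needed for every $i$ simultaneously. The delicate step throughout is the orientation-plus-connectedness argument in the previous paragraph, which is where the backtracking hypothesis (and the bound $\log(n) > |t_i|$ forcing the central log-scale box to contain all the $\gamma_{t_i}'$ endpoints near $(n,n)$) is consumed; everything else is bookkeeping of height changes.
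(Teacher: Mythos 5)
Your proposal follows the paper's proof closely: you establish the $4m$-height value at the inner endpoint of the horizontal segment via the gap event, use the disparity with boundary heights to force a level change along $\gamma_{t_1}'$, invoke the no-loops event to attribute that change to corridor paths, and then rule out $\Gamma_{m-1,m}^t$ with a topological argument combining the partition of $AD$ by $\gamma\cup\gamma_{t_1}'$, the path-orientation convention, and the directional constraints imposed by the no-backtracking events. The paper is somewhat more explicit about which of the three components of $AD\setminus(\gamma\cup\gamma_{t_1}')$ is taken to be $R$ (it picks the one containing the white vertex of the crossing dimer and observes that the subsection of $\Gamma_{m-1,m}^t$ ending there must enter $R$), but the substance of the argument is the same.
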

\begin{proposition}\label{expectheightprop}
\begin{align}
\P_{Az}(\text{$a$-height along $\partial S^T_{\{t\}_j}=4m$})\rarrow 1
\end{align} 
in the limit $n\rarrow \infty$.
\end{proposition}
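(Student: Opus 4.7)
The plan is to apply Chebyshev's inequality to $h^a(f)$ at each face $f\in \partial S^T_{\{t\}_j}$ and then take a union bound over the finitely many such $f$. Since $h^a$ takes values in $4\Z$, once I show
\begin{align*}
\lim_n \E[h^a(f)]=4m \quad \text{and} \quad \lim_n \var[h^a(f)]=0
\end{align*}
for each fixed $f\in \partial S^T_{\{t\}_j}$, the estimate $\P_{Az}(|h^a(f)-\E[h^a(f)]|\geq 2)\leq \var[h^a(f)]/4$ forces $h^a(f)=4m$ with probability tending to $1$, and a union bound over the $j$ points concludes.

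To handle the expectation, I would express $h^a(f)$ as an affine function of dimer indicators along a fixed discrete path $P$ from a boundary face $f_0$ of $AD$ (where $h$ is prescribed) to $f$. By the rules defining the height function, each edge $e$ crossed by $P$ contributes a $\pm 3$ or $\pm 1$ height change depending on $\ind_{e\in\omega}$, so
\begin{align*}
h^a(f)=c_0+\sum_{e\in P}c_e\,\ind_{e\in\omega}
\end{align*}
for explicit integer constants $c_0,c_e$. I would take $P$ to run from the top boundary of $AD$ down through the smooth phase to $f$, so that the bulk of $P$ lies in the smooth region. Then $\E[h^a(f)]=c_0+\sum_{e\in P}c_e L(e,e)$ by \eqref{corrkernel}. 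Splitting the sum into a smooth part and a short part near the rough-smooth boundary, and evaluating $L(e,e)$ asymptotically using the double contour integral formula recalled in Section \ref{formresults}, identifies the limit as $4m$ (the maximal $a$-height), with the rough-region contribution bounded crudely by an $o(1)$ quantity for $\gamma\in(0,1/3)$.

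For the variance, the determinantal formula \eqref{corrfuncs} yields $\mathrm{Cov}(\ind_{e_1\in\omega},\ind_{e_2\in\omega})=-L(e_1,e_2)L(e_2,e_1)$ for $e_1\neq e_2$, whence
\begin{align*}
\var[h^a(f)]=\sum_{e\in P}c_e^2\, L(e,e)\bigl(1-L(e,e)\bigr)-\sum_{\substack{e_1,e_2\in P\\ e_1\neq e_2}}c_{e_1}c_{e_2}\,L(e_1,e_2)L(e_2,e_1).
\end{align*}
The diagonal sum is of order $|P|\cdot a$, since the one-point density of $a$-dimers in the smooth phase is of order $a$. The off-diagonal sum is controlled using the decay of $L(e_1,e_2)L(e_2,e_1)$ in the separation $|e_1-e_2|$, which follows from the saddle-point analysis in Sections \ref{asympBeps1eps2} and \ref{asympk11inv}; in the smooth phase this product decays fast enough that the resulting double sum is $o(1)$.

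The main obstacle is the variance bound, and it is precisely the reason $\alpha$ is chosen to be $\beta\log(n)$ rather than a fixed constant: the off-diagonal decay of $L$ along $P$ must beat the length $\alpha p_n$ of $P$, and a fixed $\alpha$ would leave $\var[h^a(f)]$ bounded away from zero, making Chebyshev useless. Once the quantitative smooth-phase kernel asymptotics are in place, the expectation and variance limits follow by straightforward bookkeeping, and the union bound over $\partial S^T_{\{t\}_j}$ yields the proposition.
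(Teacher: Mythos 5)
Your overall strategy matches the paper's: the proof of Proposition \ref{expectheightprop} at the end of Section \ref{aheightsection} is exactly a Chebyshev/Markov estimate combined with a union bound over the finitely many faces of $\partial S^T_{\{t\}_j}$, with the mean and variance delegated to Propositions \ref{ExpectationProp} and \ref{vargoestozero}. The real work, however, lies in establishing those two limits, and your sketch of that work has concrete gaps. Any lattice path from a boundary face $f_0$ to a face $f\in\partial S^T$ must cross the rough annulus, whose width is $\sim an=n^\gamma$ and inside which the one-point density of $a$-dimers is of order $1$; the resulting contribution to $\E[h^a(f)]$ is therefore $O(n^\gamma)$, not $o(1)$, so it cannot be discarded by a crude bound and must be computed to $o(1)$ precision. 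You also never determine the prescribed boundary value $c_0=h(f_0)$ at your chosen face on the top boundary. The target value $4m$ only emerges after an exact cancellation between $c_0$ and the height change accumulated through the rough and frozen segments, so without computing $c_0$ the claim "identifies the limit as $4m$" is unsupported. The paper circumvents both problems by choosing the path $\gamma_{t'}$ from the lower-left boundary along $\vec{e}_1$: there $c_0=2|t'q_n|+1$ is explicit and of order $n^{1-\gamma/3}$, and the sum over the entire path (through the frozen, rough, and near-smooth segments together) is evaluated in closed form via a geometric-sum identity and residues --- the terms $A_n',B_n',R_n',X_n$ in Proposition \ref{ExpectationProp} --- with the endpoint correction $X_n$ shown to be $o(1)$ by the decay of the Airy kernel (this, not the length of $P$, is where $\alpha=\beta\log n$ enters).

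The variance sketch has a parallel gap. The one-point density of $a$-dimers deep in the smooth phase is $O(a^2)$, not $O(a)$ (Lemma \ref{boundK^-1}), so the smooth-phase diagonal sum is $O(na^2)=o(1)$ --- fine --- but the diagonal contribution from the rough crossing alone is $O(n^\gamma)$, so you cannot conclude by bounding the diagonal and off-diagonal sums separately in absolute value. The determinantal cancellation between them is essential, and obtaining it requires carrying out the full computation with both sums simultaneously; this is exactly what the quadruple-contour-integral analysis in the proof of Proposition \ref{vargoestozero} (residues, parity symmetries under $w_2'\mapsto -w_2'$, the $X_n$-type Airy decay bound) delivers, and it is not recoverable from a pointwise decay estimate on $L(e_1,e_2)L(e_2,e_1)$ alone.
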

The proof of the above Proposition can be found at the end of section \ref{aheightsection}. 
\begin{proposition} \label{backtrackpropmain} Fix $(t_1,..,t_j)\in \R^j$,
\begin{align}
\P_{Az}(\text{$\exists$ backtracking dimers along }\gamma\cup_i\gamma_{t_i}')\rarrow 0
\end{align}
as $n\rarrow \infty$.
\end{proposition}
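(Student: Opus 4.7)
The plan is to apply a union bound and then bound each individual single-dimer probability via the Kasteleyn correlation formula together with asymptotics for $K_{a,b}^{-1}$.

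First, I would enumerate the candidates. Along $\gamma=\gamma_1^-\cup\gamma_1^+$ there are $O(n)$ candidate $a$-dimer positions of the specified backtracking $B_{\varepsilon_1}\times W_{\varepsilon_2}$ type, and along each $\gamma_{t_i}'$ there are also $O(n)$ candidate positions of type $B_1\times W_1$. As $j$ is fixed the total count is $O(n)$, and by the union bound,
\begin{align*}
\P_{Az}(\exists\text{ backtracking along }\gamma\cup_i\gamma_{t_i}')\leq\sum_e\P_{Az}(e\in\omega).
\end{align*}
Using the single-dimer identity $\P_{Az}(e\in\omega)=|K_{a,b}(b,w)|\,|K_{a,b}^{-1}(w,b)|$ and the fact that $|K_{a,b}(b,w)|=a\sim n^{-1+\gamma}$ for any $a$-dimer, a final bound of order $O(na^2)=O(n^{-1+2\gamma})$ would suffice since $\gamma<1/2$. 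It is therefore enough to prove $|K_{a,b}^{-1}(w,b)|=O(a)$ uniformly over the candidate pairs $(w,b)$.

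To establish this bound I would split by region and use the double contour integral formula for $K_{a,b}^{-1}$ from \cite{C/J}. In the interior of the frozen regions intersected by $\gamma$, a standard steepest descent analysis yields exponential decay of $|K_{a,b}^{-1}|$, far stronger than what is needed. In the smooth phase intersected by each $\gamma_{t_i}'$, and in the portion of $\gamma$ inside the central box of side $\log(n)q_n$ around $(n,n)$, the backtracking $(\varepsilon_1,\varepsilon_2)$ pairing is precisely the one off the dominant smooth-phase pattern, so the leading contribution to the double contour integral for $K_{a,b}^{-1}(w,b)$ picks up an explicit prefactor of $a$; this is the same kind of residue calculation as in Section \ref{asympk11inv}. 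In the thin rough strip of width $O(an)=O(n^\gamma)$ between the frozen and smooth phases, only a crude $|K_{a,b}^{-1}|=O(1)$ bound is required, and because there are only $O(n^\gamma)$ candidates there the resulting contribution is $O(n^\gamma\cdot a)=O(n^{-1+2\gamma})\rarrow 0$.

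The main obstacle is verifying the smooth-phase estimate, namely that for the specified backtracking $(\varepsilon_1,\varepsilon_2)$ pairings the leading asymptotics of the double contour integral for $K_{a,b}^{-1}(w,b)$ acquire the additional factor $a$ beyond the generic $O(1)$ smooth-phase behaviour; the frozen and transition regions are straightforward once this is in place. Once established, summing the three regimes over the finite collection of lines gives the claim.
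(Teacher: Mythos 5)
Your proposal shares the paper's starting point --- union bound, the Kasteleyn identity $\P(e\in\omega)=aiK_{a,1}^{-1}(w,b)$ with $|K_{a,1}(b,w)|=a$, and the decomposition $K_{a,1}^{-1}=\K_{1,1}^{-1}-B_{\eps_1,\eps_2}+B^*_{\eps_1,\eps_2}$ --- and targets the same final bound $O(na^2)=O(n^{-1+2\gamma})\to0$, but it replaces the paper's central step with a different strategy. You propose a \emph{pointwise} estimate $|K_{a,1}^{-1}(w,b)|=O(a)$ on the candidate pairs, split by region, with a crude $O(1)$ bound on the thin rough strip. The paper (Proposition~\ref{backtrackingforheightfn}) instead estimates the \emph{sum} $\sum_e B_{\eps_1,\eps_2}(w,b)$ over all backtracking positions along the line: the positions are indexed linearly, the $H$-ratios in the double contour formula become a geometric progression in that index, the geometric sum collapses to a ratio, and after deforming the $w_1$-contour one picks up a second-order pole at $w_1=w_2$ and a simple pole at $w_1=-w_2$; the resulting residues, together with $|V_{\eps_1,\eps_2}|\leq Ca$, yield $\sum_e B = O(na)$, hence $a\cdot O(na)=O(na^2)$.

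The summation route that the paper takes sidesteps the one real gap in your sketch, namely the crossover layer around the rough-smooth boundary. There the kernel is governed by the Airy scaling of Theorem~\ref{extendedAiryKernelLimit}: $K_{a,1}^{-1}\sim \mathrm{g}_{\eps_1,\eps_2}(an)^{-1/3}\sim a(an)^{-1/3}$, which is much larger than $O(a)$ but decays only at the Airy $3/2$-exponent rate on the $(an)^{1/3}$-lattice scale. Declaring a strip of width $O(n^\gamma)$ and applying the $O(1)$ bound there does not by itself control the tail of this Airy regime; you would need explicit decay estimates quantifying how far past the strip one must go before $|B|$ drops to $O(a)$, plus a check that this extra neighborhood is still negligibly wide. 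The paper never has to face this crossover because it bounds the whole sum at once; the residue calculus absorbs the transition region into a global $O(na)$ estimate. You correctly identify the smooth-phase estimate as the crux --- note that the deep-smooth exponential decay is essentially the second half of Lemma~\ref{B*smallmaindiag}, and the frozen region is handled the same way --- but as written the argument in the intermediate regime is a genuine missing step rather than a routine verification.
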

The proof of the above Proposition can be found at the end of section \ref{backtracksection}.
\begin{proposition}Fix $(t_1,\xi_1),...,(t_j,\xi_j)\in \R^{2}$,
\begin{align}
\P_{Az}(\Gamma(t_1)\leq \xi_1,...,\Gamma(t_j)\leq \xi_j)=\P_{Az}(\text{no $a$-dimer above $(t_i,\xi_i)$})+o(1).
\end{align}
in the limit $n\rarrow \infty$.\label{pathsprobablitygaps}
\begin{proof}
Let 
\begin{align}
B_n=& \ \{\text{$a$-height along $\partial S^T_{\{t\}_j}=4m$}\}\cap \{\text{no loops intersect $S'$}\}\\ &\cap\{\text{$\nexists$ backtracking dimers along }\gamma\cup_i\gamma_{t_i}'\}.\nonumber
\end{align}
By Lemma \ref{controloverGamma},
\begin{align}
\{\text{no $a$-dimer above $(t_i,\xi_i)$ }\}\cap B_n\subset \{\Gamma(t_1)\leq \xi_1,...,\Gamma(t_j)\leq \xi_j\}\cap B_n.
\end{align}
So if we show that
\begin{align}
\{\Gamma(t_1)\leq \xi_1,...,\Gamma(t_j)\leq \xi_j\}\cap B_n\subset \{\text{no $a$-dimer above $(t_i,\xi_i)$ }\}\label{temp23ggdfgd}
\end{align}
then we are done, since $\P(B_n)\rarrow 1$ by Lemma \ref{loopsbound} and Propositions \ref{expectheightprop} and \ref{backtrackpropmain}. We show \eqref{temp23ggdfgd} for $j=1$. 

If $\Gamma(t_1)\leq \xi_1$, then $a$-dimers intersecting the line
$X'=\{(t_1q_n \vec{e}_2+(n(1+\xi_c)+(\xi_1p_n,\infty)\vec{e}_1))\cap S$ can be part of either; a loop, double edge, or a path other than $\Gamma_{m-1,m}^b$. If the $a$-height at $\partial S^T_{\{t\}_1}$ is $n$ and there are no backtracking dimers along $\gamma_{t_1}'\supset X'$, then the $a$-height is equal to $n$ along $X'$. This means no loops or paths intersect the line $X'$ (otherwise the $a$-height would change). Moreover, a double edge intersecting $X'$ must contain a backtracking edge, hence double edges can not intersect $X'$. Therefore the only possibility is that there are no $a$-dimers intersecting the line $X'$. This proves \eqref{temp23ggdfgd} for $j=1$, the extension to arbitrary $j$ is the same argument applied to each line.
\end{proof}
\end{proposition}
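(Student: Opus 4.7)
The plan is to show that on a suitable high-probability event $B_n$ the two events
\[
\{\Gamma(t_1)\leq \xi_1,\ldots,\Gamma(t_j)\leq \xi_j\} \quad\text{and}\quad \{\text{no } a\text{-dimer above }(t_i,\xi_i)\}
\]
coincide, so that the proposition follows by bounding their symmetric difference by $\P_{Az}(B_n^c)=o(1)$.

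Concretely, I would take $B_n$ to be the intersection of the three events already appearing in Lemma \ref{controloverGamma}: (i) the $a$-height on $\partial S^T_{\{t\}_j}$ equals $4m$, (ii) no loops of $\mcD_\ell$ intersect $S'$, and (iii) no backtracking dimers appear along $\gamma\cup_i \gamma_{t_i}'$. With $a\sim n^{-1+\gamma}$, Lemma \ref{loopsbound} gives that the complement of (ii) has probability at most $|S'|(3a)^4/(1-3a)=o(1)$, since $|S'|$ grows only polynomially in $n$; meanwhile Propositions \ref{expectheightprop} and \ref{backtrackpropmain} handle (i) and (iii) respectively, so $\P_{Az}(B_n)\to 1$.

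The forward inclusion $\{\text{no } a\text{-dimer above }(t_i,\xi_i)\}\cap B_n\subset \{\Gamma(t_1)\leq \xi_1,\ldots,\Gamma(t_j)\leq \xi_j\}$ is exactly Lemma \ref{controloverGamma}. For the reverse inclusion I would fix $i$ and let $X_i'$ denote the portion of $\gamma_{t_i}'$ lying above the face $(n(1+\xi_c)+1+\xi_i p_n)\vec{e}_1+t_i q_n\vec{e}_2$. On $B_n$ the $a$-height is monotonic along $\gamma_{t_i}'$ by (iii) and equals $4m$ at the top endpoint of $X_i'$ by (i); the only mechanisms for it to drop below $4m$ on $X_i'$ would be a loop (excluded by (ii)), a crossing of $\Gamma_{m-1,m}^b$ (excluded by $\Gamma(t_i)\leq \xi_i$), or a crossing of $\Gamma_{m-1,m}^t$. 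The last is ruled out by the same topological/orientation argument used inside Lemma \ref{controloverGamma}: the union $\gamma\cup\gamma_{t_i}'$ partitions $AD$ into components, and the no-backtracking condition forces any candidate subpath of $\Gamma_{m-1,m}^t$ starting at the top boundary to exit rather than enter the relevant region. Hence the $a$-height is identically $4m$ along $X_i'$, so no path or loop crosses $X_i'$. Any remaining $a$-dimer on $X_i'$ would belong to a double edge, but such a double edge must contain a backtracking dimer along $\gamma_{t_i}'$, contradicting (iii). Extending to arbitrary $j$ is a line-by-line repetition of the single-line argument.

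The main obstacle is really the topological argument ruling out $\Gamma_{m-1,m}^t$, which is inherited from Lemma \ref{controloverGamma}; once that is in place, everything else is just combining Lemma \ref{loopsbound} with the two preceding propositions and chasing the two inclusions on $B_n$.
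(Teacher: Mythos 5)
Your argument matches the paper's proof: same choice of $B_n$ (the intersection of the $a$-height, no-loops, and no-backtracking events), same appeal to Lemma \ref{loopsbound} and Propositions \ref{expectheightprop}, \ref{backtrackpropmain} to get $\P_{Az}(B_n)\to 1$, forward inclusion via Lemma \ref{controloverGamma}, and reverse inclusion via monotonicity of the $a$-height together with elimination of loops, paths and double edges on $X'$. The only point where you are a bit more explicit than the paper is in the reverse inclusion: the paper passes directly from ``$a$-height $=n$ at $\partial S^T_{\{t\}_1}$ and no backtracking'' to ``$a$-height $\equiv n$ on $X'$,'' leaving implicit that one must also exclude $\Gamma_{m-1,m}^t$ crossing $X'$ (the hypotheses alone give only $h\le 4m$ on $X'$, and the last $+4$ jump before $\partial S^T$ could a priori be a crossing of $\Gamma_{m-1,m}^t$). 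You spell out that this possibility is ruled out by the same topological/orientation argument already used in the proof of Lemma \ref{controloverGamma}, which is the right way to close that gap. Other than this extra explicitness the two proofs are identical in structure.
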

Since all type $W_1\times B_1$ $a$-dimers in the gap event $ \{\text{no $a$-dimer above $(t_i,\xi_i)$}\}$ are backtracking dimers, they are easily discountable. We define: for a given $(t_i,\xi_i)$, $1\leq i \leq j$,
\begin{align}
&\{\text{no $a$-dimer in $W_0\times B_0$ above $(t_i,\xi_i)$}\}\nonumber\\&:=\{\text{no $a$-dimer of type $W_0\times B_0$ in the squished graph intersects the lines}
\nonumber\\&\qquad\qquad \{(t_iq_n \vec{e}_2+(n(1+\xi_c)+(\xi_ip_n,\infty)\vec{e}_1))\cap S, 1\leq i \leq j\}\}.\nonumber
\end{align}
Hence, the inclusions
\begin{align}
 &\{\text{no $a$-dimer above $(t_i,\xi_i)$}\}\subset  \{\text{no $a$-dimer in $W_0\times B_0$ above $(t_i,\xi_i)$}\},\nonumber\\
 &\{\text{no $a$-dimer in $W_0\times B_0$ above $(t_i,\xi_i)$}\}\cap \{\text{$\nexists$ backtracking dimers along }\cup_i\gamma_{t_i}'\}\nonumber\\&\quad \quad\subset  \{\text{no $a$-dimer above $(t_i,\xi_i)$}\}\nonumber
\end{align}
give us
\begin{corollary}
Fix $(t_1,\xi_1),...,(t_j,\xi_j)\in \R^{2}$,
\begin{align}
\P_{Az}(\Gamma(t_1)\leq \xi_1,...,\Gamma(t_j)\leq \xi_j)=\P_{Az}(\text{no $a$-dimer in $W_0\times B_0$ above $(t_i,\xi_i)$})+o(1).
\end{align}
in the limit $n\rarrow \infty$.\label{pathsprobablitygapsw0b0}
\end{corollary}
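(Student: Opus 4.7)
The plan is to derive this Corollary directly from Proposition~\ref{pathsprobablitygaps} by replacing the gap event over all $a$-dimers with the more restrictive gap event over only $W_0 \times B_0$ dimers. The two events displayed inclusions stated just before the Corollary already do most of the work; the task is to turn these set inclusions into asymptotic probability equalities using Proposition~\ref{backtrackpropmain}.

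First I would use the first inclusion, which is tautological since every $W_0\times B_0$-dimer is an $a$-dimer, to obtain
\begin{align*}
\P_{Az}(\text{no $a$-dimer above $(t_i,\xi_i)$}) \leq \P_{Az}(\text{no $a$-dimer in $W_0\times B_0$ above $(t_i,\xi_i)$}).
\end{align*}
For the reverse direction I would apply the second inclusion together with a union bound, which gives
\begin{align*}
\P_{Az}(\text{no $a$-dimer in $W_0\times B_0$ above $(t_i,\xi_i)$}) \leq \P_{Az}(\text{no $a$-dimer above $(t_i,\xi_i)$}) + \P_{Az}(\exists \text{ backtracking along }\cup_i\gamma_{t_i}').
\end{align*}
By Proposition~\ref{backtrackpropmain} the last probability is $o(1)$, so the two gap-event probabilities agree up to $o(1)$.

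Combining this equality with the conclusion of Proposition~\ref{pathsprobablitygaps} yields
\begin{align*}
\P_{Az}(\Gamma(t_1)\leq \xi_1,\dots,\Gamma(t_j)\leq \xi_j) &= \P_{Az}(\text{no $a$-dimer above $(t_i,\xi_i)$}) + o(1) \\
&= \P_{Az}(\text{no $a$-dimer in $W_0\times B_0$ above $(t_i,\xi_i)$}) + o(1),
\end{align*}
which is exactly the statement. There is no substantive obstacle here: all the genuine probabilistic content is already packaged in Propositions~\ref{pathsprobablitygaps} and~\ref{backtrackpropmain}, and the Corollary is a short bookkeeping step whose only purpose is to switch to the $W_0\times B_0$ subclass of dimers -- a switch that will presumably simplify the later asymptotic analysis of the correlation kernel, since the $W_1\times B_1$ $a$-dimers are all backtracking and would contribute only negligible terms anyway.
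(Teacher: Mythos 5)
Your proof is correct and matches the paper's intended argument: the paper states the two set inclusions just before the Corollary and declares that they "give us" the result, which is precisely the union-bound bookkeeping you carry out (with Proposition~\ref{backtrackpropmain} supplying the $o(1)$ control on the backtracking event, noting $\cup_i\gamma_{t_i}'\subset\gamma\cup_i\gamma_{t_i}'$). No gap.
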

We now write a determinantal formula for the gap process. Define $S_i$ as the collection of $W_0\times B_0$ $a$ edges in $S$ that intersect the line $t_iq_n\vec{e}_2+(n(1+\xi_c)+(\xi_ip_n,\infty))\vec{e}_1$ and $\tilde{g}$ as a function on $a$ edges in $S$ such that $g(e)=-1$ if $e\in \cup_iS_i$ and zero otherwise. We have
\begin{align}
\label{gapprob}&\P_{Az}(\text{no $a$-dimer in $W_0\times B_0$ above $(t_i,\xi_i)$})\\&\nonumber=\E_{Az}[\ind_{(\text{no $a$-dimer  in $W_0\times B_0$ above $(t_i,\xi_i)$}}]\\
&= \E_{Az}[\prod_{i=1}^j\prod_{e\in S_i}(1+\tilde{g}(e))]\nonumber\\
&=1+\sum_{k=1}^\infty \frac{1}{k!}\sum_{(e_1,...,e_k )}\tilde{g}(e_1)...\tilde{g}(e_k)\det(L(e_i,e_j))_{i,j=1}^k\nonumber\\
&=\det(1+\tilde{g}L)_{\cup_iS_i\times \cup_iS_i}\nonumber
\end{align}
where the second sum is over $(e_1,...,e_k )\in\{a \text{ edges in $\cup_iS_i$}\}^k$ and the matrices in the determinant are indexed by $a$ edges in $\cup_iS_i$. We can rewrite this determinant as a Fredholm determinant of an operator on a particular $L_2$ space, similar to \eqref{temp12}, by viewing the appearance of edges along lines as the appearance of particles in a point process on $\{t_1,...,t_j\}\times 2\Z/p_n\subset \{t_1,...,t_j\}\times\R$. That is, an edge $e\in \cup_iS_i$ intersects the line $t_iq_n\vec{e}_2+(n(1+\xi_c)+(\xi_ip_n,\infty))\vec{e}_1)\cap S$ at a point $t_iq_n\vec{e}_2+(n(1+\xi_c)+x p_n)\vec{e}_1$ where $x\in 2\Z/p_n$, so we identify $e$ and $x,t_i$, we denote this identification $e=\Phi(t_i,x)$. Now the Fredholm expansion in \eqref{gapprob} becomes
\begin{align}
&1+\sum_{k=1}^\infty\frac{1}{k!} \int_{(\{t_1,...,t_j\}\times \R)^k}\prod_{i=1}^k\tilde{g}(\Phi(t_i,x_i))\det(L(\Phi(t_i,x_i),\Phi(t_j,x_j)))_{i,j}^kd(\lambda\otimes \mu_{p_n})^k(t,x)\nonumber\\
&=\det(1+\tilde{g} L\circ \Phi)_{\ell^2(\{t_1,...,t_j\}\times 2\Z/p_n)}\nonumber
\end{align}
where $(t,x):=((t_1,x_1),...,(t_k,x_k))$, $\mu_{p_n}$ is the counting measure on $2\Z/p_n$ and the reference measure for $\ell^2(\{t_1,...,t_j\}\times 2\Z/p_n)$ is $\lambda\otimes \mu_{p_n}$. Observe that $\tilde{g}(\Phi(t_i,x))=-\ind_{(\xi_i,\alpha ]}(x)$.
Finally, we can rewrite this as a Fredholm determinant on $L^2(\{t_1,...,t_n\}\times\R)$ by setting
\begin{align}
\tilde{\Phi}(t,x)=\Phi(t,(2\floor{\frac{p_nx-1}{2}}+2)/p_n),
\end{align}
so then \eqref{gapprob} becomes
\begin{align}
&1+\sum_{k=1}^\infty\frac{1}{k!} \int_{(t_1,...,t_j\}\times\R)^k}\prod_{i=1}^k\tilde{g}(\tilde{\Phi}(t_i,x_i))\det(L(\tilde{\Phi}(t_i,x_i),\tilde{\Phi}(t_j,x_j)))_{i,j}^k\Big(\frac{p_n}{2}\Big)^kd(\lambda \otimes\mu)^k(t,x)\label{fredbigell2}\\
&=\det(1+\frac{p_n}{2}\tilde{g} L\circ \tilde{\Phi})_{L^2(\{t_1,...,t_j\}\times \R)}.\nonumber
\end{align}
\begin{proposition}
Fix $(t_1,\xi_1),...,(t_j,\xi_j)\in \R^2$, where $t_1,...,t_j$ are distinct, then
\begin{align}
\P_{Az}(\text{no $a$-dimer in $W_0\times B_0$ above $(t_i,\xi_i)$})= \P(A(t_1)\leq \xi_1+t_1^2,...,A(t_j)\leq \xi_j+t_j^2)+o(1)
\end{align}
in the limit $n\rarrow \infty$, where $A(t)$ is the Airy process. \label{agapsairy}
\begin{proof}
See the end of section \ref{formresults}.
\end{proof}
\end{proposition}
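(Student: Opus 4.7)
The plan is to prove the claimed convergence by identifying the Fredholm determinant in (\ref{fredbigell2}) with that in (\ref{temp12}) in the limit $n\to\infty$. This proceeds in three steps: pointwise convergence of the rescaled correlation kernel to the extended Airy kernel (up to a gauge), uniform tail bounds that allow us to pass the limit inside the Fredholm expansion, and interpretation of the $t_i^2$ shift on the right-hand side.

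For pointwise convergence, I would use the factorisation $L(e_i,e_j)=K_{a,b}(b_i,w_i)K_{a,b}^{-1}(w_j,b_i)$ from (\ref{corrkernel}) together with the decomposition of the inverse Kasteleyn matrix from \cite{C/J}, which splits $K_{a,b}^{-1}(w_j,b_i)$ into a double contour integral piece plus a single-integral piece $K_{1,1}^{-1}$. The double integral's asymptotics are carried out in Section \ref{asympBeps1eps2}, which under the scaling $p_n=(an)^{1/3}$, $q_n=(n^2/a)^{1/3}$ from (\ref{definitionofpnqn}) produces the Airy part $\tilde{A}(\tau,\xi;\tau',\xi')$ defined in (\ref{airypart}). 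The single-integral piece $K_{1,1}^{-1}$ is analysed in Section \ref{asympk11inv}, producing the Gaussian part $\Psi(\tau,\xi;\tau',\xi')\ind_{\tau<\tau'}$ of (\ref{gaussianpart}). After multiplying by the Jacobian $p_n/2$ (from counting measure on $2\Z/p_n$ to Lebesgue measure) and by the explicit Kasteleyn prefactor $K_{a,b}(b_i,w_i)$, the two contributions assemble into $A(\tau,\xi;\tau',\xi')$ up to a factorised gauge of the form $f(t_i,x_i)/f(t_j,x_j)$, which is invisible to the Fredholm determinant. The $\tau^2$-shift already built into the integrands of (\ref{airypart}) and (\ref{gaussianpart}) is exactly what produces $\xi_i+t_i^2$ on the right-hand side: the coordinates $(\alpha_j,\beta_j)$ from Condition \ref{cond-a-dimers} match the Airy variables $(\xi,\tau)$ in this convention, and completing the square in the exponents of the Gaussian prefactor in $\tilde{A}$ confirms the identity.

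The third step is to upgrade pointwise convergence to convergence of Fredholm determinants via dominated convergence applied termwise to the series expansion. This requires uniform Airy-type decay estimates of the form $\tfrac{p_n}{2}\bigl|L(\tilde{\Phi}(t_i,x_i),\tilde{\Phi}(t_j,x_j))\bigr|\leq C\exp\bigl(-c(x_i^{3/2}+x_j^{3/2})\bigr)$ for $x_i,x_j$ large and $n$ large, obtained by pushing the saddle-point contours past the critical point and controlling the resulting error uniformly. These bounds are integrable against the indicator $\ind_{(\xi_i,\alpha]}(x)$ uniformly in $n$, and the upper cut-off $\alpha=\beta\log n$ contributes a tail that is super-polynomially small. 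The main obstacle is precisely obtaining these uniform tail bounds over the whole admissible range $x\in[\xi_i,\alpha]$ with $\alpha$ growing in $n$, rather than only in a compact neighbourhood of the critical point; this requires a careful global contour deformation that I expect to be the technical heart of the argument. Once these estimates and the pointwise limit are in hand, termwise convergence of the Hadamard expansion gives the claimed limit.
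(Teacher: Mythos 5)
Your proposal follows essentially the same route as the paper: you split $K_{a,1}^{-1}$ via Theorem \ref{ThmInvKast}, let the double-integral $B_{\eps_1,\eps_2}$ produce the Airy part (Section \ref{asympBeps1eps2}, Proposition \ref{Btildeairy}) and $\K_{1,1}^{-1}$ the Gaussian part (Section \ref{asympk11inv}, Proposition \ref{propgasker}) — which together constitute Theorem \ref{extendedAiryKernelLimit} — absorb the factorised gauge into the determinant, and then pass to the Fredholm-determinant limit by Hadamard's inequality plus dominated convergence. The only cosmetic slip is the form of the uniform tail bound: the paper's steepest-descent estimates (see \eqref{temp607oy}, together with the stated uniformity over $-\beta\le\alpha_j\le\beta\log n$) yield linear exponential decay $Ce^{-c\alpha}$ rather than the super-exponential $e^{-cx^{3/2}}$ you suggest, which is more than is needed and more than what is actually proved.
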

Together Corollary  \ref{pathsprobablitygapsw0b0} and Proposition \ref{agapsairy} prove Theorem \ref{propmaxpathconvfdd}.
\section{Formulas and Results}\label{formresults}
In this section we present results and formulas required to prove Propositions \ref{expectheightprop} through \ref{agapsairy}.
We will use the formula for $K_{a,1}^{-1}$ derived in \cite{C/J}. Before we can give the formula we first need to define the objects that come into it.
For $\eps_1,\eps_2\in\{0,1\}$, we write
\begin{equation}
h(\eps_1,\eps_2)=\eps_1(1-\eps_2)+\eps_2(1-\eps_1).\label{HHH}
\end{equation}
In many cases the coordinates of the vertices giving the position of the dimers will enter into formulas via the following expressions. Let $(x_1,x_2)\in W_{\eps_1}$, $(y_1,y_2)\in B_{\eps_2}$ and define
\begin{align}
&k_1=\frac{x_2-y_2-1}{2}+h(\eps_1,\eps_2), && \ell_1=\frac{y_1-x_1-1}{2},\label{klh1}\\
& k_2=k_1+1-2h(\eps_1,\eps_2) , && \ell_2=\ell_1+1.
\label{klh2}
\end{align}
Let $\D^*\subset \C$ denote the punctured open unit disc centred at the origin. A basic role is played by the analytic function
\begin{align}\label{Gfunction}
G: \C \setminus i[-\sqrt{2c},\sqrt{2c}] \rarrow \D^* ;  \ w\mapsto \frac{1}{\sqrt{2c}}(w-\sqrt{w^2+2c})
\end{align}
with $\sqrt{w^2 +2c}=\exp(\frac{1}{2}\log(w+i\sqrt{2c})+\frac{1}{2}\log(w-i\sqrt{2c}))$ and where the logarithm takes its argument in $(-\pi/2,3\pi/2)$. Let $\sqrt{1/w^2 +2c}$  denote the previous square root evaluated at $1/w$. Note that $G$ is the inverse of the analytic bijection $\D^*\rarrow \C \setminus i[-\sqrt{2c},\sqrt{2c}] ; \ u\mapsto \sqrt{c/2}(u-1/u)$, which is related to the Joukovski map, see chapter 6 in \cite{Mar}. We also remark that $|G|=1$ along its cut, and $G(w)\rarrow0$ as $|w|\rarrow \infty$. 
From the definition, we have the symmetries
\begin{align}
\overline{\sqrt{w^2+2c}}=\sqrt{\overline{w}^2+2c}, && -\sqrt{w^2+2c}=\sqrt{(-w)^2+2c}
\label{squarerootsymmetries}
\end{align}
which give
\begin{align}
&\overline{G(w)}=G(\overline{w}),&& -G(w)=G(-w).
\label{Gsymmetries}
\end{align}

Let $k,\ell$ be non-zero integers. For $k,\ell\geq  0$ define
\begin{align}
E_{k,\ell}=\frac{i^{-k-\ell}}{2(1+a^2)2\pi i}\int_{\Gamma_1} \frac{dw}{w}\frac{G(w)^\ell G(1/w)^k}{\sqrt{w^2+2c}\sqrt{1/w^2+2c}},
\label{EKL1}
\end{align}
where $\Gamma_r$ is the unit circle of radius $r$, and then for all $k,\ell$ define
\begin{align}
E_{k,\ell}&=E_{|k|,|\ell|}.\label{ADPAD}
\end{align}
For vertices $(x_1,x_2)\in W_{\eps_1}$, $(y_1,y_2)\in B_{\eps_2}$, we define
\begin{align}
\K_{1,1}^{-1}(x_1,x_2,y_1,y_2)=-i^{1+h(\eps_1,\eps_2)}(a^{\eps_2}E_{k_1,\ell_1}+a^{1-\eps_2}E_{k_2,\ell_2}),
\label{K_11}
\end{align}
where we used (\ref{klh1}) and (\ref{klh2}). Define for $0\leq x_1,x_2\leq 2n$
\begin{align}
H_{x_1,x_2}(w)=\frac{w^{n/2}G(w)^{\frac{n-x_1}{2}}}{G(w^{-1})^{\frac{n-x_2}{2}}}.
\label{Hfunc}
\end{align}
Next we define a function $V_{\eps_1,\eps_2}(w_1,w_2)$ which is analytic in the set $\big(\C\setminus (i(-\infty,-1/\sqrt{2c}]\cup i[-\sqrt{2c},\sqrt{2c}]\cup i[1/\sqrt{2c},\infty))\big)^2$.
Define 
\begin{align}
V_{\eps_1,\eps_2}(w_1,w_2)=\sum_{\gamma_1,\gamma_2}^1 Q_{\gamma_1,\gamma_2}^{\eps_1,\eps_2}(w_1,w_2)\label{Vours}
\end{align}
where we define $Q$ as follows,
\begin{align}\label{Qours}
Q_{\gamma_1,\gamma_2}^{\eps_1,\eps_2}(w_1,w_2)=& \ (-1)^{\eps_1+\eps_2+\eps_1\eps_2}\frac{G(w_1)^{3\eps_1-1}G(w_2^{-1})^{3\eps_2-1}}{4(1+a^2)^2\prod_{j=1}^2\sqrt{w_j^2+2c}\sqrt{w_j^{-2}+2c}}\\
& \ (-1)^{\gamma_1(1+\eps_2)+\gamma_2(1+\eps_1)}s(w_1)^{\gamma_1}s(w_2^{-1})^{\gamma_2}\tilde{y}_{\gamma_1,\gamma_2}^{\eps_1,\eps_2}(G(w_1),G(w_2^{-1})),\nonumber
\end{align}
where \begin{align}
s(w)=w\sqrt{w^{-2}+2c},
\end{align}
and the terms 
\begin{align}
\tilde{y}_{\gamma_1,\gamma_2}^{\eps_1,\eps_2}(u,v):=\tilde{y}_{\gamma_1,\gamma_2}^{\eps_1,\eps_2}(a,1,u,v)\label{ytildes}
\end{align} are specified by the following \emph{Laurent polynomials}, for $a,b>0$,
\begin{align}
&\tilde{y}_{0,0}^{0,0}(a,b,u,v)=\frac{a}{4(a^2+b^2)^2}(2a^6u^2v^2-a^4b^2(1+u^4+u^2v^2-u^4v^2+v^4-u^2v^4)\nonumber\\
&\quad\qquad-a^2b^4(1+3u^2+3v^2+2u^2v^2+u^4v^2+u^2v^4-u^4v^4)-b^6(1+v^2+u^2+3u^2v^2)),\nonumber\\
&\tilde{y}_{0,1}^{0,0}(a,b,u,v)=\frac{a}{4(a^2+b^2)}(b^2+a^2u^2)(2a^2v^2+b^2(1+v^2-u^2+u^2v^2)),\nonumber\\
&\tilde{y}_{1,0}^{0,0}(a,b,u,v)=\frac{a}{4(a^2+b^2)}(b^2+a^2v^2)(2a^2u^2+b^2(1-v^2+u^2+u^2v^2)),\nonumber\\
&\tilde{y}_{1,1}^{0,0}(a,b,u,v)=\frac{a}{4}(2a^2u^2v^2+b^2(-1+v^2+u^2+u^2v^2)),\nonumber
\end{align}
and
\begin{align}
\tilde{y}_{i,j}^{0,1}(a,b,u,v)&:=\tilde{y}_{i,j}^{0,0}(b,a,u,v^{-1}),\nonumber\\
\tilde{y}_{i,j}^{1,0}(a,b,u,v)&:=\tilde{y}_{i,j}^{0,0}(b,a,u^{-1},v),\nonumber\\
\tilde{y}_{i,j}^{1,1}(a,b,u,v)&:=\tilde{y}_{i,j}^{0,0}(a,b,u^{-1},v^{-1})\nonumber.
\end{align}
The formula we have introduced for $V_{\eps_1,\eps_2}(w_1,w_2)$ is equal to the one given in \cite{C/J}, this fact is proved in Section \ref{Misclemmas}, in particular see Lemma \ref{tempVequivlemma}.
We also have equation $3.11$ in \cite{C/J} given by
\begin{align}
B_{\eps_1,\eps_2}(a,x_1,x_2,y_1,y_2)=\frac{i^{(x_2-x_1+y_1-y_2)/2}}{(2\pi i)^2}\int_{\Gamma_r}\frac{dw_1'}{w_1'}\int_{\Gamma_{1/r}}dw_2'\frac{V_{\eps_1,\eps_2}(w_1',w_2')}{w_2'-w_1'}\frac{H_{x_1+1,x_2}(w_1')}{H_{y_1,y_2+1}(w_2')}
\label{Beps1eps2}
\end{align}
where $\sqrt{2c}<r<1$, $(x_1,x_2)\in W_{\eps_1}, (y_1,y_2)\in B_{\eps_2}$. 
 A formula for the elements of the inverse Kasteleyn matrix was given in \cite{C/J} which we bring forward in the following theorem.
\begin{theorem}\label{ThmInvKast}
For $n=4m$, $m\in\N_{>0}$, $(x_1,x_2)\in W_{\eps_1}$, $(y_1,y_2)\in B_{\eps_2}$ with $\eps_1,\eps_2\in \{0,1\}$ then 
\begin{align}
K_{a,1}^{-1}((x_1,x_2),(y_1,y_2))&=\K_{1,1}^{-1}((x_1,x_2),(y_1,y_2))- B_{\eps_1,\eps_2}(a,(x_1,x_2),(y_1,y_2))\label{inverseKastelements2}\\
&\qquad\qquad+B^*_{\eps_1,\eps_2}(a,(x_1,x_2),(y_1,y_2)).\nonumber
\end{align}
\end{theorem}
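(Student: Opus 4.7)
The statement is a verbatim restatement of the inverse Kasteleyn formula derived in \cite{C/J}, so the proof is largely a matter of verification rather than original derivation; nevertheless, the plan breaks naturally into three steps.

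First, I would invoke the main inversion theorem of \cite{C/J} directly, which yields a decomposition of $K_{a,1}^{-1}$ of exactly the form $\K_{1,1}^{-1} - B_{\eps_1,\eps_2} + B^*_{\eps_1,\eps_2}$. Recalling the structure of that derivation: the Kasteleyn matrix is a block Toeplitz-type operator with $2\times 2$ blocks reflecting the two-periodicity of the weights; one passes to a discrete Fourier representation on the torus, diagonalises the symbol, and then corrects for the finite-size Aztec boundary by subtracting the double-integral terms $B, B^*$, which represent the difference between the bulk infinite-volume inverse and the true finite-$n$ inverse. The single-integral piece $\K_{1,1}^{-1}$ is the bulk contribution, extracted by a residue computation that collapses one of the two torus integrations.

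Second, I would confirm that the single-integral expression \eqref{EKL1}--\eqref{K_11} defining $\K_{1,1}^{-1}$ matches the bulk inverse in \cite{C/J} by a direct translation of indices, using $c=a/(1+a^2)$, the Joukowski-type map $G$ from \eqref{Gfunction}, and the parity conventions encoded in $h(\eps_1,\eps_2)$, $k_1,\ell_1,k_2,\ell_2$ of \eqref{klh1}--\eqref{klh2}. Similarly, the factor $H_{x_1,x_2}(w)$ and the contour radii $\sqrt{2c}<r<1$ in \eqref{Beps1eps2} agree on inspection with the setup in \cite{C/J}.

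Third, and this is where the real work lies, I would verify the equivalence of the formula \eqref{Vours}--\eqref{Qours} with the form of $V_{\eps_1,\eps_2}$ used in \cite{C/J}. As noted in the text following \eqref{ytildes}, this equivalence is deferred to Lemma \ref{tempVequivlemma}. The check proceeds by expanding both expressions as Laurent polynomials in $u=G(w_1)$, $v=G(w_2^{-1})$ and the square-root factors $s(w_1), s(w_2^{-1})$, then matching coefficients case-by-case over $(\eps_1,\eps_2,\gamma_1,\gamma_2)\in\{0,1\}^4$, invoking the symmetries \eqref{Gsymmetries} to reduce to the base case $(0,0)$. The main obstacle is precisely this algebraic bookkeeping: $\tilde{y}_{0,0}^{0,0}$ is a polynomial in $a,b,u,v$ of fairly high total degree with many terms and delicate sign conventions, and the intertwining of the $(-1)^{\eps_1+\eps_2+\eps_1\eps_2}$ and $(-1)^{\gamma_1(1+\eps_2)+\gamma_2(1+\eps_1)}$ factors must be tracked carefully. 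Fortunately it is a finite computation, so no analytic subtleties intervene once the correspondence of variables is fixed.
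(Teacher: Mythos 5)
Your plan correctly identifies the structure: Theorem~\ref{ThmInvKast} is not rederived in the paper but carried over verbatim from \cite{C/J}, so the only genuine work in establishing it with the notation introduced here is verifying that the expression \eqref{Vours}--\eqref{Qours} for $V_{\eps_1,\eps_2}$ agrees with the form used in \cite{C/J}, which the paper isolates as Lemma~\ref{tempVequivlemma}. That match of strategy is right.

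Where you diverge from the paper is in how Lemma~\ref{tempVequivlemma} is actually established. You propose a brute-force coefficient match: expand both sides as Laurent polynomials in $u=G(w_1)$, $v=G(w_2^{-1})$ and $s(w_1),s(w_2^{-1})$, and compare term by term over all sixteen $(\eps_1,\eps_2,\gamma_1,\gamma_2)$. The paper instead observes the structural identity $\tilde{y}_{i,j}^{\eps_1,\eps_2}=f_{a,b}\cdot y_{i,j}^{\eps_1,\eps_2}$ (and its analogues for $\eps_1,\eps_2\ne 0$ via the $a\leftrightarrow b$ symmetry of $f_{a,b}$), then uses Lemma~\ref{fabsimp} to evaluate $f_{a,1}(G(w_1)^{1-2\eps_1},G(w_2)^{1-2\eps_2})=4(1+a^2)^2(1-w_1^2w_2^2)G(w_1)^{2-4\eps_1}G(w_2)^{2-4\eps_2}$, which cancels the $(1-w_1^2w_2^2)$ and $G$-power factors in $x_{\gamma_1,\gamma_2}^{\eps_1,\eps_2}$ exactly. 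Finally, the parity relation \eqref{relQ}, $Q_{\gamma_1,\gamma_2}^{\eps_1,\eps_2}(w_1,-w_2)=(-1)^{-\eps_2-1}Q_{\gamma_1,\gamma_2}^{\eps_1,\eps_2}(w_1,w_2)$, shows that the symmetrised definition $V=\tfrac12(Z+(-1)^{\eps_2+1}Z(w_1,-w_2))$ in \cite{C/J} collapses to the unsymmetrised sum \eqref{Vours}. Your coefficient-matching route would in principle succeed, since it is a finite check, but it forgoes the factorisation that makes the identity transparent and would be substantially more laborious; it also does not by itself explain why the factor of $\tfrac12$ and the $(-1)^{\eps_2+1}$ in \eqref{VinCJ} disappear, which is exactly what \eqref{relQ} handles. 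If you go the brute-force route, be sure to separately establish this parity step, otherwise you are comparing against the wrong target expression.
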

The expression for $B^*_{\eps_1,\eps_2}$ consists of three integrals related to $B_{\eps_1,\eps_2}((x_1,x_2),(y_1,y_2))$ by symmetry, these are given in section \eqref{Misclemmas}. In Lemma \ref{B*smallmaindiag}, we show that $B^*$ is negligible in the setting of condition \ref{cond-a-dimers}.

In order to analyse the asymptotics of the correlation kernel \eqref{corrkernel} we wil take two $a$-dimers $e_i=(w_i,b_i)$, $e_j=(w_j,b_j)$ as in condition \ref{cond-a-dimers}, we will then perform steepest descent analysis on the correlation kernel afforded by Theorem \ref{ThmInvKast}.
Define
\begin{align}
\mathcal{G}=\sqrt{\frac{a}{2}}\exp(a/2)\label{Gcurly}
\end{align}
and 
\begin{align}
g_{\eps_1,\eps_2}=-2iV_{\eps_1,\eps_2}(i,i).
\end{align}
\begin{theorem}\label{extendedAiryKernelLimit}
Let $e_i=(\tilde{x}_i,\tilde{y}_i)$, $e_j=(\tilde{x}_j,\tilde{y}_j)$ be two $a$-dimers as in condition \ref{cond-a-dimers} and let $(x_1,x_2)=\tilde{x}_j\in W_{\eps_1}$, $(y_1,y_2)=\tilde{y}_i\in B_{\eps_2}$, then
\begin{align}
i^{x_1-y_1-1}&\mathcal{G}^{\frac{2+x_1-x_2-y_1+y_2}{2}}g_{\eps_1,\eps_2}^{-1}(an)^{1/3}K_{a,1}^{-1}(\tilde{x}_j,\tilde{y}_i)\\&\rarrow -e^{\beta_j\alpha_j-\beta_i\alpha_i+\frac{2}{3}(\beta_j^3-\beta_i^3)} A(-\beta_j,\alpha_j+\beta_j^2;-\beta_i,\alpha_i+\beta_i^2)\nonumber
\end{align}
as $n$ tends to infinity, uniformly in $\alpha_i,\alpha_j,\beta_i,\beta_j$.
\end{theorem}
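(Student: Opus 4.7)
The plan is to leverage the exact decomposition provided by Theorem \ref{ThmInvKast}, which writes the inverse Kasteleyn matrix as
\begin{align*}
K_{a,1}^{-1} = \mathcal{K}_{1,1}^{-1} - B_{\eps_1,\eps_2} + B^*_{\eps_1,\eps_2},
\end{align*}
and then handle the three terms separately after applying the rescaling prefactor $i^{x_1-y_1-1}\mathcal{G}^{(2+x_1-x_2-y_1+y_2)/2}g_{\eps_1,\eps_2}^{-1}(an)^{1/3}$. This prefactor is a combination of a gauge transform (absorbing the phase $i^{x_1-y_1-1}$ and the growth factor $\mathcal{G}^{\cdot}$ coming from the exponential value of the integrand at the saddle point), a Jacobian factor $p_n=(an)^{1/3}$ to convert the discrete point process on $2\Z/p_n$ into a continuous correlation kernel, and the normalisation $g_{\eps_1,\eps_2}^{-1}=(-2iV_{\eps_1,\eps_2}(i,i))^{-1}$ that strips off the $V$-prefactor at the critical point $w=i$.

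The $B^*_{\eps_1,\eps_2}$ term is negligible under Condition \ref{cond-a-dimers} by Lemma \ref{B*smallmaindiag}, so it suffices to analyse the two remaining contributions. For $B_{\eps_1,\eps_2}$, I would carry out a steepest descent analysis on the double contour integral \eqref{Beps1eps2}: the phase function is $n\log(H_{x_1+1,x_2}(w_1')/H_{y_1,y_2+1}(w_2'))$, and under the coordinate choice \eqref{xlocs}--\eqref{ylocs} with reference point at the rough-smooth boundary it has a coincident (double) saddle at $w=i$. Rescaling $w_j' = i + z_j/p_n$ around this saddle unfolds the double critical point: the cubic term in $z_j$ produces the $e^{iz^3/3}$ factor of the Airy integrand, the coefficients of $\alpha_j,\beta_j$ in the Taylor expansion give the linear and quadratic terms $-\tau' z^2 + iz(\xi'-\tau'^2)$ in \eqref{airypart}, and the rescaled contours are deformed onto $\Gamma_\pm$. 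This is the content of section \ref{asympBeps1eps2} and reproduces $\tilde{A}(\tau,\xi;\tau',\xi')$ with $\tau=-\beta_j$, $\tau'=-\beta_i$, $\xi=\alpha_j+\beta_j^2$, $\xi'=\alpha_i+\beta_i^2$ after matching prefactors.

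For $\mathcal{K}_{1,1}^{-1}$, which is translation invariant and depends only on $k_1,\ell_1,k_2,\ell_2$ from \eqref{klh1}--\eqref{klh2}, I would perform steepest descent on the single contour integrals $E_{k,\ell}$ of \eqref{EKL1}. The saddle lies at $w=i$, and because $|k|\sim |\beta_j-\beta_i|q_n$ and $|\ell|\sim|\alpha_j-\alpha_i|p_n$, a quadratic expansion of the phase at the saddle yields a Gaussian integral whose value is exactly $\Psi(\tau,\xi;\tau',\xi')$ of \eqref{gaussianpart}. The indicator $\ind_{\tau<\tau'}$ emerges from the requirement that $|k|=k$, i.e. that $\beta_i<\beta_j$, in order for the steepest descent contour to be deformable through the saddle without picking up an extra residue; in the opposite case the leading contribution of $\mathcal{K}_{1,1}^{-1}$ is exponentially smaller than $p_n^{-1}$. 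This is section \ref{asympk11inv}.

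The main obstacle will be the steepest descent on $B_{\eps_1,\eps_2}$. Three technical points need care: first, because the saddle is degenerate and one is unfolding it, the descent contours through $w=i$ must be chosen so that after rescaling they tend to $\Gamma_\pm$; second, as $a=n^{-1+\gamma}\to 0$ the branch cut of $G$ at $i[-\sqrt{2c},\sqrt{2c}]$ collapses to the origin, so the expansions of $G(w)$, $G(1/w)$ and $\sqrt{w^2+2c}$ near $i$ must be carried out uniformly in a neighbourhood of $i$ of radius adapted to $p_n$; third, uniformity of the limit in $-\beta\leq\alpha_j\leq\beta\log n$ requires tail estimates on the contour integrals outside a shrinking neighbourhood of $i$, where the real part of the phase should remain negative with margin growing in $n$. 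Once the three asymptotics are collated, the gauge and normalisation factors collapse to give precisely the claimed limit, including the exponential prefactor $-e^{\beta_j\alpha_j-\beta_i\alpha_i+\frac{2}{3}(\beta_j^3-\beta_i^3)}$.
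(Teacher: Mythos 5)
Your overall strategy matches the paper's: decompose $K_{a,1}^{-1}$ via Theorem~\ref{ThmInvKast}, dispose of $B^*$ via Lemma~\ref{B*smallmaindiag}, run a steepest descent on $B_{\eps_1,\eps_2}$ around the degenerate saddle at $w=i$ at the $p_n$-scale to produce $\tilde A$, run a saddle-point analysis on the $E_{k,\ell}$ integrals to produce $\Psi$, and match the prefactors. This is essentially sections~\ref{asympBeps1eps2} and~\ref{asympk11inv}, so I will only flag the points where your sketch misdescribes the $\K_{1,1}^{-1}$ part.

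First, your claim that ``$|k|\sim|\beta_j-\beta_i|q_n$ and $|\ell|\sim|\alpha_j-\alpha_i|p_n$'' is wrong: from \eqref{klh1}--\eqref{klh2} one gets $k_1=\tfrac12(p_n(\alpha_j-\alpha_i)+q_n(\beta_j-\beta_i))+O(1)$ and $\ell_1=\tfrac12(p_n(\alpha_i-\alpha_j)+q_n(\beta_j-\beta_i))+O(1)$, so when $\beta_j\neq\beta_i$ both $|k_1|$ and $|\ell_1|$ are $\sim q_n|\beta_j-\beta_i|/2$, with the $\alpha$-dependence only entering at the lower $p_n$-scale through the \emph{difference} $|k_1|-|\ell_1|$. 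This matters: in Proposition~\ref{propeklasymp} the Gaussian $e^{-\frac{m}{8a}(a_m'-a_m)^2}$ is driven precisely by this small difference ($m(a'_m-a_m)\sim\pm p_n(\alpha_j-\alpha_i)$) sitting on the $\sqrt{am}$-scale of the saddle, and putting $k$ and $\ell$ on incommensurate scales would lead you to the wrong limit.

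Second, the mechanism you propose for the indicator $\ind_{\tau<\tau'}$ --- a residue picked up (or not) when deforming the steepest-descent contour --- is not what happens here. The contour in \eqref{EKL1} is the unit circle and the saddle analysis (Proposition~\ref{propeklasymp}) is carried out identically for all signs of $k,\ell$, because $E_{k,\ell}$ is \emph{defined} by $E_{k,\ell}=E_{|k|,|\ell|}$ in \eqref{ADPAD}. The indicator instead emerges from the mismatch between the power $\mathcal{G}^{q_n|\beta_j-\beta_i|}$ coming out of the saddle value $e^{h(i)}=i\mathcal{G}$ and the signed power $\mathcal{G}^{-q_n(\beta_j-\beta_i)}$ sitting in the conjugating prefactor $\mathcal{G}^{(2+x_1-x_2-y_1+y_2)/2}$ (see \eqref{temp3901} and the paragraph after \eqref{temp0940}): their product is $1$ when $\beta_j>\beta_i$ and tends to $0$ exponentially when $\beta_j<\beta_i$. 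Apart from these two points, the plan is the one the paper executes, including the assembling of the Airy and Gaussian parts and the prefactor computation via Lemmas~\ref{labelH(i)simple} and~\ref{Veps1eps2formula}.
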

This is proved at the end of section \ref{asympk11inv}
\begin{proof}[proof of Proposition \ref{agapsairy}]
First we establish that
\begin{align}
\det\Big(\frac{p_n}{2}L(\tilde{\Phi}(t_i,x_i),\tilde{\Phi}(t_j,x_j))\Big)_{i,j=1}^k\rarrow \det\Big(A(-t_i,x_i+t_i^2,-t_j,x_j+t_j^2)\Big)_{i,j=1}^k
\end{align}
in the limit $n\rarrow \infty$, pointwise $a.e.$ for $t_i,t_j\in \R$, $x_i,x_j>-\beta$. We have $\tilde{\Phi}(t_j,x_j)=e_j=(\tilde{x}_j,\tilde{y}_j)$ where 
\begin{align}
\tilde{x}_j=(n(1+\xi_c)+\alpha_jp_n)\vec{e}_1+t_jq_n\vec{e}_2+(1,0)\\
\tilde{y}_j=(n(1+\xi_c)+\alpha_jp_n)\vec{e}_1+t_jq_n\vec{e}_2+(0,1)
\end{align}
are from condition  \ref{cond-a-dimers} with $|t_j-\beta_j|\leq2/q_n$ and $|\alpha_j-x_j|\leq 2/p_n$.
The correlation kernel is given by
\begin{align}
\frac{p_n}{2}L(e_i,e_j)=\frac{p_n}{2}aiK^{-1}_{a,1}(\tilde{x}_j,\tilde{y}_i).
\end{align}
By substitution,
\begin{align}
i^{x_1-y_1-1}&=i^{(\alpha_j-\alpha_i)p_n+(\beta_i-\beta_j)q_n}\nonumber\\
\mcG^{\frac{2+x_1-x_2-y_1+y_2}{2}}&=\mcG^{2+\beta_iq_n-\beta_jq_n}\nonumber\\
g^{-1}_{0,0}&=-i(1+O(a))\nonumber
\end{align}
where in the last equality we used lemma \ref{Veps1eps2formula}. Hence,
\begin{align}
\det\Big(\frac{p_n}{2}L(e_i,e_j)\Big)_{i,j}&=e^{-a/k}\det\Big(\mcG^2(-i)p_nK_{a,1}^{-1}(\tilde{x}_j,\tilde{y}_i)\Big)_{i,j}\label{temprfvh}\\
&=e^{-a/k}\det\Big(-i^{x_1-y_1-1}\mcG^{\frac{2+x_1-x_2-y_1+y_2}{2}}g_{0,0}^{-1}(an)^{1/3}K_{a,1}^{-1}(\tilde{x}_j,\tilde{y}_i)\Big)_{i,j}\nonumber\\
&\rarrow \det\Big(A(-t_i,x_i+t_i^2,-t_j,x_j+t_j^2)\Big)_{i,j}\nonumber
\end{align}
where we note the conjugate factors and in the last line we used $\alpha_j\rarrow x_j, \beta_j\rarrow t_j$. The last line in \eqref{temprfvh}, we see the time reversal $t_j\mapsto -t_j$ of the Airy process. This does matter, since the Airy process itself is reversible. All that remains is to justify switching the limits in \eqref{fredbigell2}, which holds by Hadamard's inequality and estimates on the kernel of the type used to prove \eqref{temp607oy}, see Lemma 12.30 in \cite{BDS}.
\end{proof}
\section{Asymptotics of $B_{\eps_1,\eps_2}$}\label{asympBeps1eps2}
In this section, we investigate the asymptotics of $B_{\eps_1,\eps_2}$.
 First we define the saddle-point function of $B_{\eps_1,\eps_2}$ relevant for our regime of the weight space.
Observe the following basic facts: $\eta\rarrow 1-1/\eta^2$ is analytic and non-zero on $\C\setminus [-1,1]$, and maps $\C\setminus[-1,1]$ to $\C\setminus (-\infty,0]$. Hence
 \begin{align}
 \eta\rarrow \sqrt{1-1/\eta^2}
 \end{align}
 is analytic on $\C\setminus [-1,1]$ (with principal branch square root). Now note that for $x>1$, $\sqrt{x-1}\sqrt{x+1}=x\sqrt{1-x^2}$, so by uniqueness of analytic functions
 \begin{align}
 \sqrt{\eta-1}\sqrt{\eta+1}=\eta\sqrt{1-1/\eta^2}.
 \end{align}
 We set $\eta=-iw/\sqrt{2c}$ in the above identity to get 
 \begin{align}
 \sqrt{w^2+2c}=w\sqrt{1+\frac{2c}{w^2}}\label{squarerootrewrite}
 \end{align}
 where the square root on the right-hand side is the principal branch square root. We can now rewrite $G$ as
 \begin{align}
 G(w)=\frac{w}{\sqrt{2c}}(1-\sqrt{1+\frac{2c}{w^2}})
 \label{temp24dzb}
 \end{align}
 
  Observe that $w\rarrow G(1/w)$ is analytic on $\C\setminus(\{0\}\cup i[1/\sqrt{2c},\infty)\cup i(-\infty,-1/\sqrt{2c}])$ and has an analytic extension to $\C\setminus i[1/\sqrt{2c},\infty)\cup i(-\infty,-1/\sqrt{2c}]$ by defining its value to be zero at $w=0$. Fix $R>1$, let $s\in (|w|,1/\sqrt{2c})$, and define a function $R(w,a)$ for $|w|\leq R$ and $a>0$ small enough that $1/(2\sqrt{2c})>R$, by
  \begin{align}
  (\sqrt{c}w)^5R(w,a):&=\frac{(\sqrt{2c}w)^5}{2\pi i}\int_{\Gamma_{\sqrt{2c}s}}\frac{\frac{1}{\zeta}(1-\sqrt{1+\zeta^2})}{\zeta^5(\zeta-\sqrt{2c}w)}d\zeta\quad
  \Big(= \frac{w^5}{2\pi i}\int_{\Gamma_s}\frac{G(1/\zeta)}{\zeta^5(\zeta-w)}d\zeta\Big).
  \end{align}
  From the definition of $R(w,a)$ we see that picking $s=3/(4\sqrt{2c})$, the function $w\rarrow R(w,a)$ is bounded by some constant $C>0$ independent of $a$ for $|w|\leq R$. Observe also that $w\rarrow R(w,a)$ is analytic for $|w|\leq 1/(2\sqrt{2c})$.
Hence, we have
 \begin{align}\label{Gexpans}
 G(1/w)=-\sqrt{\frac{c}{2}}w+\frac{c^{3/2}w^3}{2\sqrt{2}}+(\sqrt{c}w)^5R(w,a).
 \end{align}

An application of Taylor's theorem yields a bounded function $\tilde{R}(w,a)$
\begin{align}
\log(G(1/w))=\log(-\sqrt{\frac{a}{2}}w)-\frac{aw^2}{2}+a^2\tilde{R}(w,a)\label{logGtaylor}
\end{align}
where $\tilde{R}$ is bounded for all $|w|\leq R$, $a>0$ sufficiently small (depending on $R$).
\begin{assumption}\label{assumptionairynotationcoords}
For the remainder of this section, we let $e_i=(\tilde{x}_i,\tilde{y}_i)$, $e_j=(\tilde{x}_j,\tilde{y}_j)$ be two $a$-dimers as in condition \ref{cond-a-dimers} and let $(x_1,x_2)=\tilde{x}_j\in W_{\eps_1}$, $(y_1,y_2)=\tilde{y}_i\in B_{\eps_2}$.
\end{assumption}

We have

\begin{align}
H_{x_1+1,x_2}(w_1)&=\frac{w_1^{n/2}G(w_1)^{(n-x_1-1)/2}}{G(1/w_1)^{(n-x_2)/2}}\\
\nonumber&=\frac{w_1^{n/2}G(w_1)^{(-n\xi_c+\beta_jq_n-\alpha_jp_n-2)/2}}{G(1/w_1)^{(-n\xi_c-\beta_jq_n-\alpha_jp_n-2\eps_1)/2}}\\&=\exp\frac{n}{2}\big(\log(w_1)-\xi_1^{\tilde{x}_j}\log G(w_1)+\xi_2^{\tilde{x}_j}\log G(1/w_1)\big)\nonumber
\end{align}
and
\begin{align}
H_{y_1,y_2+1}(w_2)=\exp\frac{n}{2}\big(\log(w_2)-\xi_1^{\tilde{y}_i}\log G(w_2)+\xi_2^{\tilde{y}_i}\log G(1/w_2)\big)
\end{align}
where
\begin{align}
\xi_1^{\tilde{x}_j}=\xi_c-\beta_jq_n/n+\alpha_jp_n/n+2/n,\quad \xi_2^{\tilde{x}_j}=\xi_c+\beta_jq_n/n+\alpha_jp_n/n+2\eps_1/n,\\
\xi_1^{\tilde{y}_i}=\xi_c-\beta_iq_n/n+\alpha_ip_n/n+2\eps_2/n,\quad
\xi_2^{\tilde{y}_i}=\xi_c+\beta_iq_n/n+\alpha_ip_n/n+2/n.
\end{align} 

We note that $\beta_jq_n/n\rarrow 0$, that
\begin{align}
\frac{q_n}{n}>>\frac{a}{2}>>\frac{p_n}{n}>>\frac{1}{n}>>a^2
\end{align}
and that $a>>\alpha_jp_n/n$ uniformly in $\alpha_j$. 

Hence by \eqref{logGtaylor}, we have a bounded function $R(w_1):=R(w_1,\beta_j,\alpha_j,a,\eps_1)$ (for $1/R\leq |w_1|\leq R$) such that
\begin{align}
H_{x_1+1,x_2}(w_1)&=\exp\Big(\frac{n}{2}\big(\log(w_1)-\xi_1^{\tilde{x}_j}(\log(-\sqrt{\frac{a}{2}}\frac{1}{w_1})-a/(2w_1^2))+\xi_2^{\tilde{x}_j}(\log(-\sqrt{\frac{a}{2}}w_1)-aw_1^2/2)\big)+a^2nR(w_1)\Big)\\
&=w_1^{n/2}\big(-\sqrt{\frac{a}{2}}\frac{1}{w_1}\big)^{-n\xi_1^{\tilde{x}_j}/2}\big(-\sqrt{\frac{a}{2}}w_1\big)^{n\xi_2^{\tilde{x}_j}/2}\exp\Big(\frac{na}{4}\xi_1^{\tilde{x}_j}\frac{1}{w_1^2}-\frac{na}{4}\xi_2^{\tilde{x}_j}w_1^2+na^2R(w_1)\Big)\nonumber\\
&=\Big(-\sqrt{\frac{2}{a}}\Big)^{-\beta_jq_n+1-\eps_1}w_1^{n(\xi_c+1/2)+\alpha_jp_n+\eps_1+1}\exp\Big(\frac{na}{4}\xi_1^{\tilde{x}_j}\frac{1}{w_1^2}-\frac{na}{4}\xi_2^{\tilde{x}_j}w_1^2+na^2R(w_1)\Big).\nonumber
\end{align}
Observe that
\begin{align}
\frac{na}{4}\xi_1^{\tilde{x}_j}=-\frac{an}{8}-\beta_j\frac{aq_n}{4}+O(a^2n),&&\frac{na}{4}\xi_2^{\tilde{x}_j}=-\frac{an}{8}+\beta_j\frac{aq_n}{4}+O(a^2n),\nonumber
\end{align}
where the $O(a^2n)$ is uniform in $\alpha_j$, and
\begin{align}
n(\xi_c+1/2)=\frac{an}{2}+O(a^2n).
\end{align}
Finally, we arrive at
\begin{align}
H_{x_1+1,x_2}(w_1)=&\ \Big(-\sqrt{\frac{2}{a}}\Big)^{-\beta_jq_n+1-\eps_1}w_1^{\eps_1+1}\\&\times\exp\Big(\frac{an}{2}f(w_1)-\beta_j\frac{aq_n}{4}(w_1^2+\frac{1}{w_1^2})+\alpha_jp_n\log(w_1)+a^2nR_1(w_1)\Big)\nonumber
\end{align}
with a saddle-point function
\begin{align}
f(w)=\log(w)+\frac{1}{4}(w^2-\frac{1}{w^2}).
\end{align}
We call $f$ a saddle-point function because
\begin{align}
aq_n=(an)^{2/3}, && p_n=(an)^{1/3}
\end{align}
are lower order than $an$ as $n\rarrow \infty$.
Similarly,
\begin{align}
H_{y_1,y_2+1}(w_2)=&\ \Big(-\sqrt{\frac{2}{a}}\Big)^{-\beta_iq_n+\eps_2-1}w_2^{\eps_2+1}\\&\times\exp\Big(\frac{an}{2}f(w_2)-\beta_i\frac{aq_n}{4}(w_2^2+\frac{1}{w_2^2})+\alpha_ip_n\log(w_2)+a^2nR_2(w_2)\Big)\nonumber.
\end{align}
Clearly, the remainder functions $R_1,R_2$ are bounded for $1/R<|w_1|,|w_2|<R$, $-\beta<\beta_i,\beta_j<\beta$, $-\beta<\alpha_i,\alpha_j<\beta\log(n)$ for fixed $\beta,R>>1$ and all $a>0$ small (depending on $\beta,R$).
We compute
\begin{align}
f'(w)=\frac{(1+w^2)^2}{2w^3},&&f''(w)=\frac{(w^2-3)(w^2+1)}{2w^4}, && f'''(w)=\frac{6+2w^2}{w^5}.
\end{align}
We see $f'(i)=f'(-i)=f''(i)=f''(-i)=0$, so $f$ has double critical points at $i$ and $-i$. The paths of steepest ascent and descent satisfy
\begin{align}
\mcI[f(w)]=\mcI[f(i)]=\pi/2.\label{ascentdescentcondition}
\end{align} Observe that
\begin{align}
\mcR[f(w)]=\mcR[f(-w)]=\mcR[f(\overline{w})],
\end{align}
so we only need to consider the upper-right quadrant $\HH^+$ in finding the paths of steepest ascent and descent. Let $w=re^{i\theta}$ for $\theta\in(0,\pi/2)$, $r>0$. The condition \eqref{ascentdescentcondition} is equivalent to
\begin{align}
\theta-\pi/2+\frac{1}{4}(r^2+1/r^2)\sin(2\theta)=0,
\end{align}
this has two solutions for $r^2$,
\begin{align}
r^2_{\pm}(\theta)=\frac{2(\pi/2-\theta)}{\sin(2\theta)}\pm \sqrt{\Big(\frac{2(\pi/2-\theta)}{\sin(2\theta)}\Big)^2-1}.
\end{align}
Observe the following basic facts; $\theta\mapsto 2(\pi/2-\theta)/\sin(2\theta)$ is a decreasing function on $\theta\in (0,\pi/2)$ and
\begin{align}
\lim_{\theta\rarrow0^+}2\frac{\pi/2-\theta}{\sin(2\theta)}=\infty&&\lim_{\theta\rarrow\pi/2^-}2\frac{\pi/2-\theta}{\sin(2\theta)}=1.
\end{align}
Also, for $z\in (1,\infty)$, $z+\sqrt{z^2-1}$ is increasing from $1$ to infinity, and $z-\sqrt{z^2-1}$ is decreasing from $1$ to $0$.
Hence we see that  as $\theta$ goes from $0$ to $\pi/2$, $r_+(\theta)$ decreases from $\infty$ to $1$, and $r_-(\theta)$ increases from $0$ to $1$. 

Label the contour which is parametrised by $r_+(\theta)e^{i\theta}$, and its reflections in the real and imaginary axes, as $asc$. Label the contour which is parametrised by $r_-(\theta)e^{i\theta}$, and its reflections in the real and imaginary axes, by $desc$. Since $asc$ goes to infinity in $\HH^+$, it intersects the ball of radius $R>>1$, $B_R(0)$ at a point $z_R^*$. Consider the contour which is the union of $asc\cap\HH^+$ and the section of $\partial B_R(0)$ starting at $z_R^*$ and travelling down to the real line. Label this contour (and its reflections in real and imaginary axes) $asc_R$. Similarly, $desc$ travels down the origin in $\HH^+$ and intersects and ball of radius $1/R<<1$ at $z_{1/R}^{**}$. Label the contour $desc_{1/R}$ which is the union of $desc\cap\HH^+$ and the section of $\partial B_{1/R}(0)$ starting at $z_{1/R}^{**}$ and travelling down to the real line (and its reflections).

Because 
\begin{align}
f(w)-f(i)=-\frac{2i}{3}(w-i)^3+O(w-i)^4
\end{align}
we see that $desc$ corresponds to the path of steepest descent, and $asc$ is the path of steepest ascent.

For convenience, introduce
\begin{align}
\tilde{g}_{\alpha_j,\beta_j}(w)=f(w)-\beta_j\frac{q_n}{2n}(w^2+\frac{1}{w^2})+\alpha_j\frac{2p_n}{an}\log(w)
\end{align}
so that
\begin{align}
H_{x_1+1,x_2}(w_1)=&\ \Big(-\sqrt{\frac{2}{a}}\Big)^{-\beta_jq_n+1-\eps_1}w_1^{\eps_1+1}\exp\Big(\frac{an}{2}\tilde{g}_{\alpha_j,\beta_j}(w_1)+a^2nR_1(w_1)\Big),\label{Hsaddlessimpler}\\
H_{y_1,y_2+1}(w_2)=&\ \Big(-\sqrt{\frac{2}{a}}\Big)^{-\beta_iq_n+\eps_2-1}w_2^{\eps_2+1}\exp\Big(\frac{an}{2}\tilde{g}_{\alpha_i,\beta_i}(w_2)+a^2nR_2(w_2)\Big).
\end{align}

\begin{lemma}\label{saddlepointexpprop1}
Fix an $\eps>0$ small enough and consider $w_1',w_2'\in \overline{B(i,\eps)}$. Set $w_1'=i+w_1(na)^{-1/3}$, $w_2'=i+w_2(na)^{-1/3}$, then
\begin{align}
&\frac{H_{x_1+1,x_2}(w_1')}{H_{x_1+1,x_2}(i)}=(-iw_1')^{\eps_1+1}\exp\big(-iw_1\alpha_j-w_1^2\beta_j-\frac{i}{3}w_1^3+\frac{(|w_1|+|w_1|^4) }{(an)^{1/3}}R_3(w_1,a)+a^2nR_4(w_1)\big)\label{tewmprfvn45}
\end{align}
and
\begin{align}
&\frac{H_{y_1,y_2+1}(i)}{H_{y_1,y_2+1}(w_2')}=(-iw_2')^{\eps_2+1}\exp\big(iw_2\alpha_i+w_2^2\beta_i+\frac{i}{3}w_2^3+\frac{(|w_2|+|w_2|^4) }{(an)^{1/3}}R_5(w_2,a)+a^2nR_6(w_1)\big)
\end{align}
where there is a $C>0$ such that
\begin{align}
|R_k(w,a)|\leq C\label{Rem146}
\end{align} 
 uniformly in $\alpha_j,\beta_j,\alpha_i,\beta_i$ and $a$ small enough.   
 \begin{proof}
We focus on \eqref{tewmprfvn45}. There are complex numbers $\chi_1,\chi_2\in L(i,w_1')$ such that
\begin{align}
&\tilde{g}_{\alpha_j,\beta_j}(w_1')-\tilde{g}_{\alpha_j,\beta_j}(i)\label{gexp123}\\&=\tilde{g}_{\alpha_j,\beta_j}'(i)(w_1'-i)+\tilde{g}_{\alpha_j,\beta_j}''(i)(w_1'-i)^2/2+\tilde{g}_{\alpha_j,\beta_j}'''(i)(w_1'-i)^3/3!\nonumber\\
&\quad+\mcR[(w_1'-i)^4\tilde{g}_{\alpha_j,\beta_j}^{iv}(\chi_1)/4!]+\mcI[(w_1'-i)^4\tilde{g}_{\alpha_j,\beta_j}^{iv}(\chi_2)/4!]\nonumber
\end{align}
where $L(i,w)$ is the straight line connecting $i$ and $w$. We have
\begin{align}
\label{temp3gatis}\tilde{g}_{\alpha_j,\beta_j}'(i)&=\alpha_j\frac{2p_n}{an}(-i)\\\tilde{g}_{\alpha_j,\beta_j}''(i)&=-\beta_j\frac{q_n}{2n}8+\alpha_j\frac{2p_n}{an}\\\tilde{g}_{\alpha_j,\beta_j}'''(i)&=-4i-\beta_j\frac{q_n}{2n}24i+\alpha_j\frac{2p_n}{an}2i.
\end{align}
We can find a constant $C'>0$ depending only on $\eps$ such that
\begin{align}
\sup_{w\in B(i,\eps)}|\tilde{g}_{\alpha_j,\beta_j}^{iv}(w)|\leq C'.
\end{align}
We use this bound and substitute \eqref{temp3gatis} into \eqref{gexp123}. Collecting the remainders, we see they are all $O((an)^{-4/3})$, uniformly for $\beta_j,\alpha_j$. Now substitute $w_1'=i+w_1(an)^{-1/3}$, and simplify the sum of the remainders with
\begin{align}
c_1|w_1|+c_2|w_1|^2+c_3|w_1|^3+c_4|w_1|^4\leq 2 \max_i\{c_i\}(|w_1|+|w_1|^4)
\end{align}
where $c_i>0$.
 \end{proof}
\end{lemma}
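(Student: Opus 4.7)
The plan is to expand the saddle-point function $\tilde g_{\alpha_j,\beta_j}$ in a Taylor series around the double critical point $i$ of $f$, then substitute the rescaling $w_1'-i=w_1(an)^{-1/3}$ and carefully separate the surviving order-$1$ contributions from everything that goes into the remainder. Starting from \eqref{Hsaddlessimpler}, dividing $H_{x_1+1,x_2}(w_1')$ by $H_{x_1+1,x_2}(i)$ cancels the $(-\sqrt{2/a})^{-\beta_j q_n+1-\eps_1}$ prefactor and the $\tilde g(i)$ term, leaving
\[
\frac{H_{x_1+1,x_2}(w_1')}{H_{x_1+1,x_2}(i)}=(w_1'/i)^{\eps_1+1}\exp\!\Big(\tfrac{an}{2}\big[\tilde g_{\alpha_j,\beta_j}(w_1')-\tilde g_{\alpha_j,\beta_j}(i)\big]+a^2n\big(R_1(w_1')-R_1(i)\big)\Big),
\]
which reproduces the desired prefactor $(-iw_1')^{\eps_1+1}$ since $1/i=-i$.

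For the expansion of $\tilde g_{\alpha_j,\beta_j}$ the relevant inputs are: (i) $f'(i)=f''(i)=0$ and $f'''(i)=-4i$ (double critical point); (ii) $\tfrac{d}{dw}(w^2+w^{-2})|_i=0$ and $\tfrac{d^2}{dw^2}(w^2+w^{-2})|_i=8$; and (iii) the first three derivatives of $\log$ at $i$, namely $-i,1,2i$. These give
\[
\tilde g'(i)=-\tfrac{2i\alpha_j p_n}{an},\qquad \tilde g''(i)=-\tfrac{4\beta_j q_n}{n}+\tfrac{2\alpha_j p_n}{an},\qquad \tilde g'''(i)=-4i-\tfrac{12i\beta_j q_n}{n}+\tfrac{4i\alpha_j p_n}{an}.
\]
Multiplying the $k$-th term by $\tfrac{an}{2}(w_1'-i)^k/k!$ with $w_1'-i=w_1(an)^{-1/3}$, and using the scaling identities $p_n=(an)^{1/3}$ and $q_n/n=(an)^{-1/3}$, the main contribution collapses to $-i\alpha_j w_1-\beta_j w_1^2-\tfrac{i}{3}w_1^3$; every other term produced by the first three derivatives carries a spare factor of $(an)^{-1/3}$ and is therefore subleading.

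The fourth-order Taylor remainder is handled via the uniform bound $\sup_{w\in\overline{B(i,\eps)}}|\tilde g^{iv}_{\alpha_j,\beta_j}(w)|\le C'$ (for sufficiently small $a$, uniformly in $\alpha_j,\beta_j$ in the allowed ranges), applied to the real and imaginary parts separately along the segment $L(i,w_1')$ (since Taylor's theorem with Lagrange remainder is a real-variable statement). After multiplication by $\tfrac{an}{2}$ this produces a contribution bounded by $C|w_1|^4(an)^{-1/3}$, which combines with the subleading first-, second-, and third-order pieces (all of order at most $|w_1|(an)^{-1/3}$) to give the remainder of the stated form $\tfrac{|w_1|+|w_1|^4}{(an)^{1/3}}R_3(w_1,a)$. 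The leftover $a^2n\big(R_1(w_1')-R_1(i)\big)$ is absorbed directly into $R_4(w_1)$ using the boundedness of $R_1$ on $\overline{B(i,\eps)}$.

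The analogous expansion for $H_{y_1,y_2+1}(i)/H_{y_1,y_2+1}(w_2')$ follows by the same argument with $(\alpha_j,\beta_j,\eps_1)\mapsto (\alpha_i,\beta_i,\eps_2)$, and the sign flip on the linear, quadratic, and cubic main terms comes from taking the reciprocal. The main obstacle is the bookkeeping needed to verify that all subleading contributions really are $O((an)^{-1/3})$ uniformly in $\alpha_j\in[-\beta,\beta\log n]$ and $\beta_j\in[-\beta,\beta]$; the logarithmic growth of $\alpha_j$ is the tightest constraint, but is harmless because its dominant effect has already been extracted as the $-i\alpha_j w_1$ term, and in each subleading appearance it is paired with a factor of $p_n/(an)=(an)^{-2/3}$ that kills the logarithm.
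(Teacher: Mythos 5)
Your proposal is correct and follows essentially the same route as the paper's proof: Taylor-expand $\tilde g_{\alpha_j,\beta_j}$ around the double critical point $i$ (treating the real and imaginary parts separately so that the Lagrange remainder applies), extract $-i\alpha_jw_1-\beta_jw_1^2-\tfrac{i}{3}w_1^3$ using $p_n=(an)^{1/3}$ and $q_n/n=(an)^{-1/3}$, and collect the fourth-order remainder together with the sub-leading pieces of the first three derivatives into the $\tfrac{|w_1|+|w_1|^4}{(an)^{1/3}}R_3$ term via the elementary bound $c_1|w_1|+\dots+c_4|w_1|^4\le 2\max_ic_i(|w_1|+|w_1|^4)$. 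One small imprecision: you attribute the harmlessness of the $\alpha_j\sim\log n$ growth to a paired factor of $p_n/(an)=(an)^{-2/3}$, but after multiplication by $\tfrac{an}{2}(w_1'-i)^2/2$ the $k=2$ sub-leading piece is $\tfrac{\alpha_jw_1^2}{2(an)^{1/3}}$, so $\alpha_j$ actually sits against $(an)^{-1/3}$ (same bookkeeping as in the paper's proof); the argument is otherwise identical.
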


We have a formula that simplifies the leading order factor of the asymptotics of the kernel.
\begin{lemma}\label{Veps1eps2formula}We have
\begin{align}
-2iV_{\eps_1,\eps_2}(i,i)=\mathrm{g}_{\eps_1,\eps_2}\label{lemma3.16CJ}
\end{align}
where 
\begin{align}
\mathrm{g}_{\eps_1,\eps_2}=i^{1+\eps_1(1-\eps_2)-\eps_2(1-\eps_1)}(a/2)^{(\eps_1+\eps_2)/2}(1+O(a)). \label{temp23fd}
\end{align}
We also have
\begin{align}
V_{0,0}(w_1,w_2)=-\frac{w_1}{2w_2}+a\frac{(w_1^2+w_2^2)(1+w_1^2w_2^2)}{4w_1w_2^3}+a^2R(w_1,w_2,a).\label{V00expans}
\end{align}
For a fixed $R>1$ the function defined by
\begin{align}
\tilde{V}_{\eps_1,\eps_2}(w_1,w_2)=c^{-(\eps_1+\eps_2)/2}V_{\eps_1,\eps_2}(w_1,w_2)
\label{Vuniformbdd}
\end{align}
and the function $R(w_1,w_2,a)$ above are uniformly bounded for all $a>0$ small enough, $w_1,w_2\in B_{R}(0)\setminus B_{1/R}(0)$. Furthermore, the functions
\begin{align}
\frac{d}{dw_1}\tilde{V}_{\eps_1,\eps_2}(w_1,w_2), && \frac{d}{dw_1}R(w_1,w_2,a)\label{derivsVtildeR}
\end{align}
are also uniformly bounded for all $a>0$ small enough, $w_1,w_2\in B_{R}(0)\setminus B_{1/R}(0)$.
\begin{proof}
The formula in \eqref{lemma3.16CJ} follows by a simple Taylor expansion of the formula for $V_{\eps_1,\eps_2}(i,i)$ given in Lemma 3.16 in \cite{C/J}. We focus on the expansion for $V_{0,0}(w_1,w_2)$.
The remainder functions $R_{\alpha}(w_1,w_2,a)$ in this proof are bounded uniformly for all $w_1,w_2\in B_R(0)\setminus B_{1/R}(0)$, all $0<a<1$ sufficiently small.
Observe that the \emph{polynomials} $\tilde{y}_{i,j}^{0,0}(u,v)$ in \eqref{ytildes} are written in the form 
\begin{align}
\frac{a}{4(1+a^2)}\sum_{k=0}^3a^{2k}p_{k}(u,v)
\end{align}
for some polynomials $p_k(u,v):=p_k^{i,j}(u,v)$. In each case $i,j$ we insert $u=G(w_1), v=G(w_2)$ with 
\begin{align}
G(w^{\pm1})=-\sqrt{\frac{a}{2}}\frac{1}{w^{\pm1}}+a^{3/2}R_1(w^{\pm1},a),&&\frac{1}{G(w^{\pm})}=-\sqrt{\frac{2}{a}}w^{\pm1}-\sqrt{\frac{a}{2}}\frac{1}{w^{\pm1}}+a^{3/2}R_2(w^{\pm 1},a)\label{tempGexps234}
\end{align}
into $p^{i,j}_0$, where the expansions come from \eqref{Gexpans}. 
We get
\begin{align}
p_0^{0,0}(u,v)&=-(1+v^2+u^2+3u^2v^2)\nonumber\\
&=-1-a\Big(\frac{1}{2w_1^2}+\frac{1}{2w_2^2}\Big)+a^2R_3(w_1,w_2,a),\nonumber\\
p_0^{1,0}(u,v)&=p_0^{0,1}(v,u)=(1+a^2)(1+v^2-u^2+u^2v^2)\nonumber\\
&=1+a\Big(\frac{-1}{2w_1^2}+\frac{1}{2w_2^2}\Big)+a^2R_4(w_1,w_2,a),\nonumber\\
p_0^{1,1}(u,v)&=(1+a^2)^2(-1+v^2+u^2+u^2v^2)\nonumber\\
&=-1+a\Big(\frac{1}{2w_1^2}+\frac{1}{2w_2^2}\Big)+a^2R_5(w_1,w_2,a).\nonumber
\end{align}
Hence we have
\begin{align}
\tilde{y}_{i,j}^{0,0}(G(w_1),G(w_2))=\frac{a}{4}(-1)^{i+j+1}+\frac{a^2}{4}\Big(\frac{(-1)^{i+1}}{2w_1^2}+\frac{(-1)^{j+1}}{2w_2^2}\Big)+a^3R_{i,j}(a,w_1,w_2).
\end{align}
We also have
\begin{align}
\sqrt{w^2+2c}=\frac{1}{w}+aw+a^2R_7(w,a),&&\sqrt{w^2+2c}=w+\frac{a}{w}+a^2R_7(1/w,a)
\end{align}
from which we deduce
\begin{align}
\frac{1}{4(1+a^2)^2\prod_{j=1}^2\sqrt{w_j^2+2c}\sqrt{w_j^{-2}+2c}}=\frac{1}{4}-\frac{a}{4}\Big(w_1^2+\frac{1}{w_1^2}+w_2^2+\frac{1}{w_2^2}\Big)+a^2R_8(w_1,w_2,a)\label{tempsexpans}
\end{align}
and
\begin{align}
s(w)&=1+aw^2+R_9(w,a)a^2.
\end{align}
We want to take the sum over $\gamma_1,\gamma_2$ in $Q_{\gamma_1,\gamma_2}^{0,0}$ in \eqref{Qours}, and  so we compute 
\begin{align}
&\sum_{\gamma_1,\gamma_2}(-1)^{\gamma_1+\gamma_2}s(w_1)^{\gamma_1}s(w_2^{-1})^{\gamma_2}\tilde{y}_{\gamma_1,\gamma_2}^{0,0}(G(w_1),G(w_2^{-1})\label{temp43refdmj}\\
&=\sum_{\gamma_1,\gamma_2}\frac{a}{4}(-1)^{\gamma_1+\gamma_2}(1+aw_1^2)^{\gamma_1}(1+a/w_2^2)^{\gamma_2}\Big((-1)^{\gamma_1+\gamma_2+1}+a\Big(\frac{(-1)^{\gamma_1+1}}{2w_1^2}+\frac{(-1)^{\gamma_2+1}w_2^2}{2}\Big)\Big)\nonumber\\&\quad+a^3R_{10}(w_1,w_2,a)\nonumber\\
&=-a-\frac{1}{2}(w_1^2+w_2^2)a^2+a^3R_{11}(w_1,w_2,a).\nonumber
\end{align}
We use the expansions \eqref{tempGexps234}, \eqref{tempsexpans}, \eqref{temp43refdmj} and expand brackets to obtain
\begin{align}
V_{0,0}(w_1,w_2)&=\sum_{\gamma_1,\gamma_2}Q_{\gamma_1,\gamma_2}^{0,0}(w_1,w_2)\nonumber\\
&=-\frac{w_1}{2w_2}+a\frac{(w_1^2+w_2^2)(1+w_1^2w_2^2)}{4w_1w_2^3}+a^2R(w_1,w_2,a).\nonumber
\end{align}

Now we check that $\tilde{V}$ is bounded. By using the expansions of $G(w), 1/G(w)$ above, the following bounds are straightforward to see by expanding brackets and using the triangle inequality,
\begin{align}
|\tilde{y}_{i,j}^{0,1}(u,v)|&=|\tilde{y}_{i,j}^{0,1}(v,u)|\leq C_1,\label{tempytildebounds}\\
|\tilde{y}_{i,j}^{1,1}(u,v)|&\leq \frac{C_2}{a},\nonumber
\end{align}
which hold for constants $C_1,C_2>0$, uniformly for all $w_1,w_2\in B_R(0)\setminus B_{1/R}(0)$, all $a>0$ small. Then, using the further expansions above it is easy to show $\tilde{V}$ is bounded. We omit the details.

Finally we want to prove the functions \eqref{derivsVtildeR} are bounded. Recalling \eqref{temp24dzb}, we begin by observing that the polynomials $\tilde{y}_{\gamma_1,\gamma_2}^{\eps_1,\eps_2}(u,v)$ in \eqref{ytildes} are Laurent polynomials in the variables $u^{\pm 2}, v^{\pm 2}$. Hence the term $\tilde{y}_{\gamma_1,\gamma_2}^{\eps_1,\eps_2}(G(w_1),G(w_2^{-1}))$ appearing in $Q_{\gamma_1,\gamma_2}^{\eps_1,\eps_2}(w_1,w_2)$ only depends on the functions
\begin{align}
G(w)^2=\frac{w^2}{2c}(1-\sqrt{1+\frac{2c}{w^2}})^2,&&\frac{1}{G(w)^2}=\frac{w^2}{2c}(1+\sqrt{1+\frac{2c}{w^2}})^2,
\end{align}
which are both analytic for 
\begin{align} 
(c,w)\in (B_\eps(0)\setminus \{0\})\times (B_R(0)\setminus B_{1/R}(0))\subset \C^2
\end{align}
(where we first fixed $R>1$ and then took $\eps>0$ sufficiently small). Hence $\tilde{y}_{\gamma_1,\gamma_2}^{\eps_1,\eps_2}(G(w_1),G(w_2^{-1}))$ can be considered as analytic functions of $(c,w_1,w_2)$ on 
\begin{align}
 (B_\eps(0)\setminus \{0\})\times (B_R(0)\setminus B_{1/R}(0))^2\subset \C^3. \label{analyticregion1}
\end{align}
From here we can see that $c^{(-\eps_1-\eps_2)/2}Q_{\eps_1,\eps_2}^{\gamma_1,\gamma_2}(w_1,w_2)$, \eqref{Qours}, is analytic on the set \eqref{analyticregion1}. In particular, we check the term in its numerator
\begin{align}
&c^{(-\eps_1-\eps_2)/2}G(w_1)^{3\eps_1-1}G(w_2^{-1})^{3\eps_2-1}\nonumber\\\nonumber&=\frac{2^{(\eps_1+\eps_2)/2}}{(2c)^{4\eps_1+4\eps_2-2)/2}}(w_1(1-\sqrt{1+2c/w_1^2}))^{3\eps_1-1}(\frac{1}{w_2}(1-\sqrt{1+2cw_2^2}))^{3\eps_2-1}
\end{align}
is indeed analytic on \eqref{analyticregion1}, the rest of the functions that make up $c^{(-\eps_1-\eps_2)/2}Q_{\eps_1,\eps_2}^{\gamma_1,\gamma_2}(w_1,w_2)$ are clearly analytic. Hence $\tilde{V}_{\eps_1,\eps_2}(w_1,w_2)$ is analytic on \eqref{analyticregion1}. Furthermore, the bounds in the arguments that we used to show $\tilde{V}_{\eps_1,\eps_2}(w_1,w_2)$ is uniformly bounded for $c>0$ small extend to uniform bounds on $B_{\eps}(0)\setminus \{0\}$ in a similar way. Hence the singularity at $c=0$ is removable, and so $\tilde{V}_{\eps_1,\eps_2}(w_1,w_2)$ has an analytic extension to 
\begin{align}
 B_\eps(0)\times (B_R(0)\setminus B_{1/R}(0))^2\subset \C^3. \label{analyticregion2}
\end{align}
This proves the functions \eqref{derivsVtildeR} are bounded as stated. 
\end{proof}
\end{lemma}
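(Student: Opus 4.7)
The plan is to treat the three claims of the lemma separately using Taylor expansions in the small parameter $a$, together with an analyticity/compactness argument for the uniform bounds. For the evaluation at $w_1=w_2=i$, I substitute the values $G(\pm i)$ coming from \eqref{temp24dzb} and $c=a/(1+a^2)=a+O(a^3)$, giving $G(i)=-i\sqrt{a/2}(1+O(a))$ and $G(-i)=i\sqrt{a/2}(1+O(a))$. Plugging these into the pre-factor $1/\prod_{j}\sqrt{w_j^2+2c}\sqrt{w_j^{-2}+2c}$ and into $G(w_1)^{3\eps_1-1}G(w_2^{-1})^{3\eps_2-1}$ of $Q^{\eps_1,\eps_2}_{\gamma_1,\gamma_2}$, and tracking the powers of $a/2$ and the signs $\pm i$, assembles the exponent $1+\eps_1(1-\eps_2)-\eps_2(1-\eps_1)$ of $i$ and $(\eps_1+\eps_2)/2$ of $a/2$. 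The sum over $\gamma_1,\gamma_2$ then contributes only a harmless factor $1+O(a)$, giving \eqref{lemma3.16CJ}--\eqref{temp23fd}.

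For the expansion of $V_{0,0}$, I use \eqref{Gexpans} together with the derived expansions
\begin{align*}
\tfrac{1}{G(w^{\pm 1})} &= -\sqrt{\tfrac{2}{a}}w^{\pm 1} - \sqrt{\tfrac{a}{2}}\tfrac{1}{w^{\pm 1}} + O(a^{3/2}), \\
\sqrt{w^{\pm 2}+2c} &= w^{\pm 1} + \tfrac{a}{w^{\pm 1}} + O(a^2), \\
s(w) &= 1 + aw^2 + O(a^2).
\end{align*}
Substituting these into each $Q^{0,0}_{\gamma_1,\gamma_2}$ and expanding $p^{0,0}_0(u,v)=-(1+v^2+u^2+3u^2v^2)$ at $u=G(w_1), v=G(w_2^{-1})$, the leading order is produced by the cancellation of the $a^{-1}$ from the pre-factor $1/\prod\sqrt{\cdot}\sqrt{\cdot}$ against the $a^1$ vanishing of the sum $\sum_{\gamma_1,\gamma_2}(-1)^{\gamma_1+\gamma_2}s(w_1)^{\gamma_1}s(w_2^{-1})^{\gamma_2}\tilde{y}^{0,0}_{\gamma_1,\gamma_2}(G(w_1),G(w_2^{-1}))$, producing the constant $-w_1/(2w_2)$. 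Pushing the expansion one order further yields the rational function of order $a$ claimed in \eqref{V00expans}, with everything else absorbed into $a^2 R(w_1,w_2,a)$.

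For the uniform boundedness and the derivative bounds, the key observation is that each $\tilde y^{\eps_1,\eps_2}_{\gamma_1,\gamma_2}$ is a Laurent polynomial in $u^{\pm 2}, v^{\pm 2}$, and the functions $G(w)^{\pm 2}$ admit the closed forms $\frac{w^2}{2c}(1\mp\sqrt{1+2c/w^2})^2$, which are jointly analytic in $(c,w)$ on a product of a punctured disc around $c=0$ with the annulus $1/R<|w|<R$. Consequently $\tilde y^{\eps_1,\eps_2}_{\gamma_1,\gamma_2}(G(w_1),G(w_2^{-1}))$ and the relevant pre-factors are jointly analytic in $(c,w_1,w_2)$, and the normalization $c^{-(\eps_1+\eps_2)/2}$ cancels the apparent singularity at $c=0$ coming from $G(w_1)^{3\eps_1-1}G(w_2^{-1})^{3\eps_2-1}$, so $\tilde V_{\eps_1,\eps_2}$ extends analytically across $c=0$. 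Uniform bounds on $\tilde V$ and $\partial_{w_1}\tilde V$ on the compact set $\overline{B_\eps(0)}\times(\overline{B_R(0)}\setminus B_{1/R}(0))^2$ then follow from continuity (and Cauchy's estimate for the $w_1$-derivative). The analogous claim for $R(w_1,w_2,a)$ follows by subtracting the explicit $O(1)$ and $O(a)$ contributions from $V_{0,0}$ and dividing by $a^2$; the remainder extends analytically across $a=0$ by Taylor's theorem.

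The main obstacle is a bookkeeping one: tracking the precise powers of $a$ across all four $(\gamma_1,\gamma_2)$ contributions and verifying that the $a^{-1}$ singularities from $1/\sqrt{w^2+2c}$ and the $\sqrt{2/a}$ factors in $1/G$ cancel against the $a$ vanishing in the sum to produce exactly the rational function in \eqref{V00expans}. There are no deep ideas beyond Taylor expansion and the joint analyticity in $(c,w_1,w_2)$ afforded by the squared-Joukovski form of $G^{\pm 2}$.
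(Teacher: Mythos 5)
Your proposal follows essentially the same route as the paper's proof: Taylor expand the building blocks $G$, $1/G$, $\sqrt{w^2+2c}$, $s$ and the Laurent polynomials $\tilde y^{\eps_1,\eps_2}_{\gamma_1,\gamma_2}$ in $a$, match the cancellation of the $a^{-1}$ from the prefactor against the $a^1$ vanishing of the $\gamma$-sum to get \eqref{V00expans}, and then use the joint analyticity of $G^{\pm 2}$ in $(c,w)$ on a punctured disc times the annulus, together with the removability of the singularity at $c=0$ after the $c^{-(\eps_1+\eps_2)/2}$ normalization, to obtain the uniform bounds on $\tilde V$ and its $w_1$-derivative by compactness. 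The one mild difference is the first claim: the paper obtains \eqref{lemma3.16CJ} by Taylor expanding the closed-form expression for $V_{\eps_1,\eps_2}(i,i)$ quoted from Lemma 3.16 of \cite{C/J}, whereas you propose to compute $V_{\eps_1,\eps_2}(i,i)$ directly from \eqref{Vours}--\eqref{Qours}; that is a more self-contained variant of the same calculation. Your handling of $R(w_1,w_2,a)$ (subtract the explicit $O(1)$ and $O(a)$ terms, divide by $a^2$, extend analytically across $a=0$) is also a clean way to phrase what the paper leaves slightly implicit.

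One concrete slip: your values $G(i)=-i\sqrt{a/2}(1+O(a))$ and $G(-i)=i\sqrt{a/2}(1+O(a))$ have both signs flipped. From \eqref{temp24dzb}, $G(i)=\tfrac{i}{\sqrt{2c}}\bigl(1-\sqrt{1-2c}\bigr)=+i\sqrt{a/2}\,(1+O(a))$, and by \eqref{Gsymmetries} $G(-i)=-G(i)=-i\sqrt{a/2}\,(1+O(a))$; this is also what \eqref{tempGexps234} gives at $w^{\pm 1}=i$. Since $\tilde y^{\eps_1,\eps_2}_{\gamma_1,\gamma_2}$ depends only on $G^{\pm 2}$, the global sign of $G(\pm i)$ enters $V_{\eps_1,\eps_2}(i,i)$ only through $G(w_1)^{3\eps_1-1}G(w_2^{-1})^{3\eps_2-1}$, which under your flipped signs acquires an extra factor $(-1)^{\eps_1+\eps_2}$. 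Carried through, that would shift the exponent of $i$ in \eqref{temp23fd} by $2$ when $\eps_1\ne\eps_2$, effectively swapping the $(0,1)$ and $(1,0)$ cases. You nonetheless write the correct final exponent $1+\eps_1(1-\eps_2)-\eps_2(1-\eps_1)$, so this looks like a transcription slip rather than a conceptual error, but it should be corrected before the sign-tracking argument is written out.
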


We have a lemma regarding the leading term in the asymptotics
\begin{lemma}\label{labelH(i)simple}
We have that
\begin{align}
\frac{H_{x_1+1,x_2}(i)}{H_{y_1,y_2+1}(i)}=i^{(y_1+y_2-x_1-x_2)/2}\abs{G(i)}^{(x_2-y_2+y_1-x_1-2)/2}.
\end{align}
\begin{proof}
We use $G(i)=i(1-\sqrt{1-2c})=i\abs{G(i)}$, $G(1/i)=i^{-1}\abs{G(i)}$ in \eqref{Hfunc}.
\end{proof}
\end{lemma}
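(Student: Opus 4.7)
The plan is to substitute the definition \eqref{Hfunc} directly into the ratio and evaluate $G$ at $\pm i$ using the specified branch convention. First, plugging in at $w=i$ in both numerator and denominator, the common factor $i^{n/2}$ cancels and the $n$-dependent parts of the exponents cancel as well, leaving
\begin{equation}
\frac{H_{x_1+1,x_2}(i)}{H_{y_1,y_2+1}(i)} = G(i)^{(y_1-x_1-1)/2}\,G(1/i)^{(x_2-y_2-1)/2}.
\end{equation}
The exponents here are integers thanks to the parity constraints coming from $(x_1,x_2)\in W_{\eps_1}$ and $(y_1,y_2)\in B_{\eps_2}$, so no branch ambiguity arises when raising $G(i)$ and $G(1/i)$ to these powers.

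Next, I would compute $G(\pm i)$ explicitly from \eqref{Gfunction}. At $w=i$, both $w+i\sqrt{2c}=i(1+\sqrt{2c})$ and $w-i\sqrt{2c}=i(1-\sqrt{2c})$ are positive multiples of $i$ (since $2c<1$ for $a\in(0,1)$), so each has argument $\pi/2$ under the prescribed logarithm branch. This yields $\sqrt{w^2+2c}\big|_{w=i}=i\sqrt{1-2c}$ and hence
\begin{equation}
G(i) = \frac{i}{\sqrt{2c}}\bigl(1-\sqrt{1-2c}\bigr) = i\,\abs{G(i)}.
\end{equation}
For $G(1/i)=G(-i)$, the symmetry $G(-w)=-G(w)$ from \eqref{Gsymmetries} immediately gives $G(1/i)=-G(i)=i^{-1}\abs{G(i)}$.

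Finally, inserting these two identities into the simplified ratio and collecting the powers of $i$ and of $\abs{G(i)}$ separately produces
\begin{equation*}
i^{(y_1-x_1-1)/2 - (x_2-y_2-1)/2}\,\abs{G(i)}^{(y_1-x_1-1)/2 + (x_2-y_2-1)/2} = i^{(y_1+y_2-x_1-x_2)/2}\,\abs{G(i)}^{(x_2-y_2+y_1-x_1-2)/2},
\end{equation*}
which is the claimed identity. There is no genuine obstacle in this argument: the only nontrivial step is pinning down the branch evaluation $\sqrt{i^2+2c}=i\sqrt{1-2c}$ using the prescribed cut, and the rest is routine bookkeeping in the exponents.
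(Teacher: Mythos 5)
Your proof is correct and follows the same route as the paper: substitute $w=i$ into \eqref{Hfunc}, cancel the $i^{n/2}$ and the $n$-dependent parts of the exponents, then use $G(i)=i\abs{G(i)}$ and $G(1/i)=i^{-1}\abs{G(i)}$ and collect powers. (You even correctly restore the factor $1/\sqrt{2c}$ that is dropped in the paper's one-line proof when writing $G(i)$.)
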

For the following proposition we define the prefactor
\begin{align}
&P_{\eps_1,\eps_2}(\tilde{x}_j,\tilde{y}_i):=i^{(x_2-x_1+y_1-y_2)/2}\frac{H_{x_1+1,x_2}(i)}{H_{y_1,y_2+1}(i)}\Big(\frac{iV_{\eps_1,\eps_2}(i,i)}{(an)^{1/3}}\Big)\label{prefact}
\end{align}
and the term $\tilde{B}_{\eps_1,\eps_2}(a,\tilde{x}_j,\tilde{y}_i)$ by
\begin{align}\label{Bepstemp51}
B_{\eps_1,\eps_2}(a,\tilde{x}_j,\tilde{y}_i)=2P_{\eps_1,\eps_2}(\tilde{x}_j,\tilde{y}_i)\tilde{B}_{\eps_1,\eps_2}(a,\tilde{x}_j,\tilde{y}_i).
\end{align}
where $B_{\eps_1,\eps_2}(a,\tilde{x}_j,\tilde{y}_i):=B_{\eps_1,\eps_2}(a,x_1,x_2,y_1,y_2)$.
\begin{proposition}\label{Btildeairy}
\begin{align}
\lim_{n\rarrow \infty} \tilde{B}_{\eps_1,\eps_2}(a,\tilde{x}_j,\tilde{y}_i)=-e^{\beta_j\alpha_j-\beta_i\alpha_i+\frac{2}{3}(\beta_j^3-\beta_i^3)} i \tilde{A}(-\beta_j,\alpha_j+\beta_j^2;-\beta_i,\alpha_i+\beta_i^2)\label{blimtairy}
\end{align}
uniformly in  $-\beta\leq \alpha_i,\alpha_j\leq \beta\log(n)$, $|\beta_i|,|\beta_j|\leq \beta$, where \eqref{airypart}.
\begin{proof}
Recall \eqref{Beps1eps2}. Let $R>>1$ and deform the $w_1$ contour $\Gamma_{r}$ to $desc_{1/R}$ and the $w_2$ contour to $asc_R$.
For the moment, let $f$ be an arbitrary continuous function on an open region containing a path $\gamma\subset \C$, then \begin{align}
\overline{\int_\gamma f(z)dz}=\int_{\overline{\gamma}}\overline{f(\overline{z})}dz.\label{tempidentitywef}
\end{align}
Using the symmetries \eqref{Gsymmetries}, we see the integrand  maps to its complex conjugate under $(w_1,w_2)\rarrow (-\overline{w_1},-\overline{w_2})$. To see this, in particular use
\begin{align}
H_{y_1,y_2+1}(-w)=(-1)^{(-y_1+y_2+1)/2}H_{y_1,y_2+1}(w)=(-1)^{(y_1+y_2+1)/2}H_{y_1,y_2+1}(w)=(-1)^{\eps_2+1}H_{y_1,y_2+1}(w)
\end{align} similarly,
\begin{align}
 H_{x_1+1,x_2}(-w)&=(-1)^{\eps_1+1}H_{x_1+1,x_2}(w),
 \end{align}
 and
 \begin{align}
 Q_{\gamma_1,\gamma_2}^{\eps_1,\eps_2}(-w_1,-w_2)&=(-1)^{3\eps_1+3\eps_2+2} Q_{\gamma_1,\gamma_2}^{\eps_1,\eps_2}(w_1,w_2)
\end{align} 
which gives $V_{\eps_1,\eps_2}(-w_1,-w_2)=(-1)^{\eps_1+\eps_2}V_{\eps_1,\eps_2}(w_1,w_2)$. So the sections of the integral over $desc\cap \HH\times asc\cap \HH$ and  $desc\cap \overline{\HH}\times asc\cap \overline{\HH}$ have the same value by the identity \eqref{tempidentitywef}.  
We see the contributions in $\HH\times \overline{\HH}$, $\overline{\HH}\times\HH$ are also equal, these contributions give zero in the limit which will become clear during this proof. Thus, we focus on the section of the integral over  $desc\cap \HH\times asc\cap \HH$. 

We require one more contour deformation.
Recall the constant $C>0$ bounding the remainders in Lemma \ref{saddlepointexpprop1}, take $\eps<\frac{1}{12C}$ so that
\begin{align}
-\frac{r^3}{6}+\frac{Cr^4}{(an)^{1/3}}=-\frac{r^3}{6}(1-\frac{6Cr}{(an)^{1/3}})<-\frac{r^3}{12},&& -\frac{s^3}{6}+\frac{Cs^4}{(an)^{1/3}}<-\frac{s^3}{12}\label{epssmallcond}
\end{align}
where $r,s\in[0,\eps(an)^{1/3}]$. We will use these bounds later in the proof. Let $w_2^*=i+\eps e^{i\theta}\in asc\cap \partial B_\eps(i)$, since $asc$ leaves $i$ at angles $\pi/6$ and $5\pi/6$ we can take $\eps$ so small that $\theta\in (\pi/6-\delta,\pi/6+\delta)\cup(5\pi/6-\delta,5\pi/6+\delta)$ where $\delta=\pi/18$. Similarly, let $w_1^*=i+\eps e^{i\varphi}\in desc\cap \partial B_\eps(i)$. Since $desc$ leaves $i$ at angles $-\pi/6$ and $-5\pi/6$ we can take $\eps$ so small that $\varphi\in(-\pi/6-\delta,-\pi/6+\delta)\cup(-5\pi/6-\delta,-5\pi/6+\delta)$. For later use, we remark that
\begin{align}
\mcR[-\frac{i}{3}w_1^3]=\frac{r^3}{3}\sin(3\theta)<-\frac{r^3}{6},&& \mcR[\frac{i}{3}w_2^3]=-\frac{s^3}{3}\sin(3\varphi)<-\frac{r^3}{6}.\label{deltasmallcond}
\end{align}

 Create a new contour $asc_R^\eps$ consisting of $asc_R\cap B_\eps(i)^c\cap\HH$ and the two straight lines connecting $i$ to the  two points $w_2*\in asc\cap\partial B_\eps(i)$. Construct $desc_{1/R}^\eps$ similarly. Deform the contour $asc_R\cap \HH$ to $asc_R^\eps$, and deform $desc_{1/R}\cap \HH$ to $desc_{1/R}^\eps$.

We have
\begin{align}
B_{\eps_1,\eps_2}(a,\tilde{x}_j,\tilde{y}_i)=2\frac{i^{(x_2-x_1+y_1-y_2)/2}}{(2\pi i)^2}\int_{desc_{1/R}^\eps}\frac{dw_1'}{w_1'}\int_{asc_R^\eps}dw_2'\frac{V_{\eps_1,\eps_2}(w_1',w_2')}{w_2'-w_1'}\frac{H_{x_1+1,x_2}(w_1')}{H_{y_1,y_2+1}(w_2')}+o(1).
\end{align}
Now we develop bounds to deal with with the section of the contours on $\partial B_R(0)$ and $\partial B_{1/R}(0)$. Let $w_2=Re^{i\theta}\in asc_R^\eps$, $\theta\in (0,\arg(z_R^*))$,
\begin{align}
\abs{\frac{H_{y_1,y_2+1}(w_2)}{H_{y_1,y_2+1}(i)}}=R^{\eps_2+1}\exp\Big(\mcR \frac{an}{2}\tilde{g}_{\alpha_i,\beta_i}(w_2)-\mcR\frac{an}{2}\tilde{g}_{\alpha_i,\beta_i}(i)+a^2nR_2'(w_2)\Big).
\end{align}
Observe that
\begin{align}
\mcR \frac{an}{2}\tilde{g}_{\alpha_i,\beta_i}(w_2)-\mcR\frac{an}{2}\tilde{g}_{\alpha_i,\beta_i}(i)&=\frac{an}{2}\Big(\log R+\mcR\frac{1}{4}(R^2e^{2i\theta}-\frac{1}{R^2}e^{-2i\theta})\\
&-\frac{\beta_jq_n}{2n}\mcR(R^2e^{2i\theta}+\frac{1}{R^2}e^{-2i\theta}-2)+\alpha_j\frac{2p_n}{an}\log R\Big)\nonumber\\
&\geq \frac{an}{2}\Big(\log R+\mcR\frac{1}{4}R^2e^{2i\theta}-\frac{1}{4R^2}-\frac{\beta q_n}{2n}(R^2+\frac{1}{R^2}+1)-\frac{\beta2p_n}{an}\log R\Big)\nonumber
\end{align}
uniformly for $-\beta<\alpha_i<\beta\log(n)$, $|\beta_i|\leq \beta$. Since $r_+(\theta)\rarrow \infty$ as $\theta\rarrow 0$, we take $R$ so large that $2\arg z_R^*<\pi/6$, i.e. $\cos(2\arg z_R^*)>1/2$. So $\mcR\frac{1}{4}R^2e^{2i\theta}>\frac{1}{8}R^2$. Since $q_n/n,p_n/(an)\rarrow 0$, take $R$ large (depending only on $\beta$) so that
\begin{align}
\frac{R^2}{8}-\frac{1}{4R^2}-\frac{\beta q_n}{2n}(R^2+\frac{1}{R^2}+1)-\frac{\beta2p_n}{an}\log R\geq \frac{R^2}{12}
\end{align}
for all $n$ large enough. Finally, take $n$ so large that $\frac{an}{2}R^2/12+a^2nR_2'(w_2)\geq \frac{an}{2}R^2/16$. We have established
\begin{align}
\abs{\frac{H_{y_1,y_2+1}(w_2)}{H_{y_1,y_2+1}(i)}}\geq R^{1+\frac{an}{2}}\exp(an\frac{R^2}{32}).
\end{align} 
Let $w_1=\frac{1}{R}e^{i\theta}\in desc_R^\eps\cap \partial B_{1/R}(0)$, $\theta\in (0,\arg(z_{1/R}^{**}))$, a similar argument yields
\begin{align}
\abs{\frac{H_{x_1+1,x_2}(w_1)}{H_{x_1+1,x_2}(i)}}\leq e^{-anR^2/32}.
\end{align} 
The bounds extend by symmetry to the rest of $asc_R^\eps\cap \partial B_R(0)$ and $desc_{1/R}^\eps\cap \partial B_{1/R}(0)$.

Now we consider the section of the integral $B_{\eps_1,\eps_2}(\tilde{x}_j,\tilde{y}_i)$ contained in $desc_{1/R}^\eps\cap B_\eps(i)\times asc_R^\eps\cap B_\eps(i)$. We use Taylor's theorem to write this as
\begin{align}
\label{tmepf,ff}2\frac{i^{(x_2-x_1+y_1-y_2)/2}}{(2\pi i)^2}\int_{desc_{1/R}^\eps\cap B_\eps(i)}\frac{dw_1'}{w_1'}&\int_{asc_R^\eps\cap B_\eps(i)}dw_2'\Big(\frac{V_{\eps_1,\eps_2}(i,i)}{i}\big(1+R_3(a)(|w_1'-i|+|w_2'-i|)\big)\Big)\\&\quad\quad\times\frac{(-iw_1')^{-\eps_1-1}(-iw_2)^{-\eps_2-1}}{w_2'-w_1'}\frac{H_{x_1+1,x_2}(w_1')}{H_{y_1,y_2+1}(w_2')}+o(1).\nonumber
\end{align}
We set $w_1'=i+w_1(an)^{-1/3}, w_2'=i+w_2(an)^{-1/3}$ and use lemma \ref{saddlepointexpprop1}  to rewrite
\begin{align}
\label{firstpartintnorem}2\frac{i^{(x_2-x_1+y_1-y_2)/2}}{(2\pi i)^2}&\int_{desc_{1/R}^\eps\cap B_\eps(i)}dw_1'\int_{asc_R^\eps\cap B_\eps(i)}dw_2'\frac{V_{\eps_1,\eps_2}(i,i)}{i}\\&\quad\times\frac{(-iw_1')^{-\eps_1-1}(-iw_2)^{-\eps_2-1}}{w_2'-w_1'}\frac{H_{x_1+1,x_2}(w_1')}{H_{y_1,y_2+1}(w_2')}\nonumber
\end{align}
as $2P(\tilde{x}_j,\tilde{y}_i)$ multiplied by
\begin{align}
&\frac{1}{(2\pi i)^2}\int_{C_1}dw_1\int_{C_2}dw_2 \frac{1}{w_1-w_2}\exp\big(-iw_1\alpha_j+iw_2\alpha_i-w_1^2\beta_j+w_2^2\beta_i-\frac{i}{3}w_1^3+\frac{i}{3}w_2^3\label{temp5341}\\&\quad+\frac{(|w_1|+|w_1|^4) }{(an)^{1/3}}R_1(w_1,a)+\frac{(|w_2|+|w_2|^4) }{(an)^{1/3}}R_2(w_2,a)+a^2nR(w_1,w_2,a)\big)\nonumber
\end{align}
where $C_1$ and $C_2$ are the images of $desc_{1/R}^\eps\cap B_\eps(i)$ and $asc_R^\eps\cap B_\eps(i)$ under the map $w\mapsto (an)^{1/3}(w-i)$, respectively. Use the inequality $|e^t-1|\leq |t|e^{|t|}$ and \eqref{Rem146} to get that the absolute value of the difference between \eqref{temp5341} and
\begin{align}
\frac{1}{(2\pi i)^2}\int_{C_1}dw_1\int_{C_2}dw_2 \frac{1}{w_1-w_2}\exp\big(-iw_1\alpha_j+iw_2\alpha_i-w_1^2\beta_j+w_2^2\beta_i-\frac{i}{3}w_1^3+\frac{i}{3}w_2^3\big)\label{airycontribution}
\end{align}
is bounded above by
\begin{align}
&\label{temp5314}\frac{C'}{4\pi^2}\int_{C_1}|dw_1|\int_{C_2}|dw_2| \exp\big(\mcR[-iw_1\alpha_j+iw_2\alpha_i-w_1^2\beta_j+w_2^2\beta_i-\frac{i}{3}w_1^3+\frac{i}{3}w_2^3]\\&\quad+C\frac{(|w_1|+|w_2|+|w_1|^4+|w_2|^4) }{(an)^{1/3}}+C''a^2n\big)(\frac{(|w_1|+|w_2|+|w_1|^4+|w_2|^4)}{(an)^{1/3}|w_1-w_2|}+\frac{C''a^2n}{|w_1-w_2|}).\nonumber
\end{align}
We see that $C_1$ and $C_2$ are straight lines and they can be parametrised in the form $w_1=re^{i\theta}$ for $r\in [0,\eps (an)^{1/3}]$, $\theta$ fixed in $[-\pi/6-\delta,-\pi/6+\delta]\cup [-5\pi/6-\delta,-5\pi/6+\delta]$ and $w_2=se^{i\varphi}$ for $s\in [0,\eps (an)^{1/3}]$, $\varphi$ fixed in $[\pi/6-\delta,\pi/6+\delta]\cup [5\pi/6-\delta,5\pi/6+\delta]$. We use the inequalities \eqref{epssmallcond}, \eqref{deltasmallcond} to see that the integral in \eqref{temp5314} is bounded above by integrals of the form
\begin{align}
\label{tempmnnmnm}\int_0^\infty dr \int_0^\infty ds\exp\big(r(\alpha_j\sin\theta+\frac{C}{(an)^{1/3}})-s(\alpha_i\sin\varphi-\frac{C}{(an)^{1/3}})\\-r^2\cos(2\theta)\beta_j+s^2\cos(2\varphi)\beta_i-\frac{r^3}{12}-\frac{s^3}{12}\big)\frac{r+s+r^4+s^4}{|r-s|}\nonumber
\end{align}
which are bounded uniformly in $n$ and $\alpha_i,\alpha_j,\beta_i,\beta_j$ (indeed,  observe the bound becomes smaller as $\alpha_j,\alpha_i$ become large). Hence the term \eqref{temp5314} goes to faster than $C(an)^{-1/3}+C'a^2n$ as $n\rarrow \infty$. Looking at the integrand in \eqref{airycontribution}, for $|w_1|, |w_2|$ large, the exponent is  $\sim-iw_1^3/3+iw_2^3/3$. By analyticity of the integrand, we see that in extending the length of the straight lines in \eqref{airycontribution} to infinity, we can deform the angles of the contours slightly to obtain
\begin{align}
-e^{\beta_j\alpha_j-\beta_i\alpha_i+\frac{2}{3}(\beta_j^3-\beta_i^3)} i \tilde{A}(-\beta_j,\alpha_j+\beta_j^2;-\beta_i,\alpha_i+\beta_i^2)
\end{align}
where $\tilde{A}$ is the Airy part of the extended-Airy kernel \eqref{airypart}. 
We see this is the contribution to the limit in \eqref{blimtairy}.
From here it is straightforward to show that the remainder given by \eqref{tmepf,ff} minus \eqref{firstpartintnorem} is the prefactor $P_{\eps_1,\eps_2}(x,y)$ multiplied by a term bounded above by $C(an)^{-1/3}$ for some $C>0$, which goes to zero uniformly in $\alpha_i,\beta_i, \alpha_j,\beta_j$.

All that remains is show that the rest of the integral over $\big(desc_{1/R}^\eps\cap B_\eps(i)\times asc_R^\eps\cap B_\eps(i)\big)^c$ goes to zero as $n\rarrow \infty$. We divide this into three parts; $desc_{1/R}^\eps\cap B_\eps(i)^c\times asc_R^\eps\cap B_\eps(i)$, $desc_{1/R}^\eps\cap B_\eps(i)\times asc_R^\eps\cap B_\eps(i)^c$ and $desc_{1/R}^\eps\cap B_\eps(i)^c\times asc_R^\eps\cap B_\eps(i)^c$. Each of these three parts can be bounded in a similar fashion so we just focus on one. In particular, we consider
\begin{align}
\frac{i^{(x_2-x_1+y_1-y_2)/2}}{(2\pi i)^2}\int_{desc_{1/R}^\eps\cap B_\eps(i)^c}\frac{dw_1'}{w_1'}\int_{asc_R^\eps\cap B_\eps(i)}dw_2'\frac{V_{\eps_1,\eps_2}(w_1',w_2')}{w_2'-w_1'}\frac{H_{x_1+1,x_2}(w_1')}{H_{y_1,y_2+1}(w_2')}.\label{temp3498}
\end{align}
The contours in \eqref{temp3498} do not cross, so we can use \eqref{Vuniformbdd} to get a $C>0$ such that
\begin{align}
\sup_{(w_1,w_2)}|\frac{V_{\eps_1,\eps_2}(w_1,w_2)}{w_1-w_2}|\leq C a^{(\eps_1+\eps_2)/2}
\end{align}
where the supremum is over $desc_{1/R}^\eps\cap B_\eps(i)^c\times asc_R^\eps\cap B_\eps(i)$. So \eqref{temp3498} is bounded above by
\begin{align}
\label{temp4209} C|P_{\eps_1,\eps_2}(x,y)|\int_{desc_{1/R}^\eps\cap B_\eps(i)^c}|dw_1|\abs{\frac{H_{x_1+1,x_2}(w_1)}{H_{x_1+1,x_2}(i)}}\\\quad\quad\cdot\int_{asc_R^\eps\cap B_\eps(i)}|dw_2|\abs{\frac{H_{y_1,y_2+1}(i)}{H_{y_1,y_2+1}(w_2)}}\nonumber
\end{align}
where $P_{\eps_1,\eps_2}(x,y)$ was defined by \eqref{prefact}.
As before we map $asc_R^\eps\cap B_\eps(i)$ under $w_2\rarrow (an)^{1/3}(w_2-i)$ with image $C_2$ and parametrise $w_2$ in the $s$-variable. The bounds \eqref{deltasmallcond} and \eqref{epssmallcond} give that the second integral in \eqref{temp4209} is bounded above by integrals of the form
\begin{align}
\int_0^\infty \frac{ds}{(an)^{1/3}}\exp(-s(\alpha_i\sin(\varphi)-\frac{C}{(an)^{1/3}})+s^2\cos(2\varphi) \beta_i-\frac{s^3}{12})
\end{align} 
which is bounded uniformly, just as before. The curve $desc_{1/R}^\eps\cap B_\eps(i)^c\cap B_R(0)$ is a path of steepest descent, so
\begin{align}
\abs{\frac{H_{x_1+1,x_2}(w_1)}{H_{x_1+1,x_2}(i)}}\leq \abs{\frac{H_{x_1+1,x_2}(w_1^*)}{H_{x_1+1,x_2}(i)}}\label{temp5634}
\end{align}
where $w_1^*\in desc_{1/R}^\eps\cap \partial B_{\eps}(i)$. Under the map $w\rarrow (an)^{1/3}(w-i)$, $w_1^*$ is the one of the endpoints of the two straight lines $C_1$ originating at $i$. Hence similar to before we apply Lemma \ref{saddlepointexpprop1} and use \eqref{deltasmallcond}, \eqref{epssmallcond} to see \eqref{temp5634} is bounded above by 
\begin{align}
\exp(\eps(an)^{1/3}(\alpha_j \sin\theta+\frac{C}{(an)^{1/3}})-\eps^2(an)^{2/3}\cos(2\theta)\beta_j-\frac{\eps^3 an}{12})\leq C'\exp(-\frac{\eps^3an}{24})
\end{align} for some $C'>0$ uniformly in $n$ large enough and for $\alpha_j,\beta_j$. Putting these bounds together we get that the modulus of \eqref{temp3498} is bounded above by $C|P_{\eps_1,\eps_2}(\tilde{x}_j,\tilde{y}_i)|e^{-\eps^3an/24}$ for some $\eps,C>0$. This shows the contribution to $\tilde{B}_{\eps_1,\eps_2}(\tilde{x}_j,\tilde{y}_i)$ goes to zero exponentially in $an$.
\end{proof}
\end{proposition}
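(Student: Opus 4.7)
The plan is to carry out a (double) saddle point analysis of the contour integral \eqref{Beps1eps2} at the double critical points $\pm i$ of the function $f$. Since the rescaled objects $aq_n = (an)^{2/3}$ and $p_n = (an)^{1/3}$ are of lower order than $an$, in the limit it is $f$ alone that governs the dominant exponential behaviour, and the cubic expansion of $f-f(i)$ near $i$ is what gives rise to the Airy part of the extended kernel.

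First I would insert the asymptotic formulas \eqref{Hsaddlessimpler} for the $H$'s into \eqref{Beps1eps2} and deform the $w_1$ contour to $desc_{1/R}$ and the $w_2$ contour to $asc_R$, picking up no poles since $w_2-w_1$ does not vanish between the original circles and the steepest curves. The identity \eqref{tempidentitywef} together with the symmetries \eqref{Gsymmetries}, the parity $V_{\eps_1,\eps_2}(-w_1,-w_2)=(-1)^{\eps_1+\eps_2}V_{\eps_1,\eps_2}(w_1,w_2)$, and the $(-1)^{\eps_k+1}$ parity of $H$ under $w\mapsto -w$, all combine so that the contribution from $desc\cap\HH\times asc\cap\HH$ equals its complex conjugate on the opposite quadrant, while the mixed quadrants cancel in the limit. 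I would then further deform the steepest curves inside $B_\eps(i)$ into the two straight tangent rays at angles $-\pi/6,-5\pi/6$ (for $desc$) and $\pi/6,5\pi/6$ (for $asc$), producing $desc_{1/R}^\eps$ and $asc_R^\eps$; this is justified because $f-f(i)=-\tfrac{2i}{3}(w-i)^3+O((w-i)^4)$ and one chooses $\eps$ small enough that \eqref{epssmallcond} holds.

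Next I would split the integral as an integral over $(desc_{1/R}^\eps\cap B_\eps(i))\times(asc_R^\eps\cap B_\eps(i))$ plus three remainder pieces over the complement. On the large-$|w|$ pieces on $\partial B_R(0)$ and $\partial B_{1/R}(0)$ I would prove an $\exp(-canR^2)$ bound for the ratio $H(w)/H(i)$ using the explicit real-part computation of $\tfrac{an}{2}\tilde g_{\alpha,\beta}$, choosing $R$ large depending only on $\beta$; on $desc_{1/R}^\eps\cap B_\eps(i)^c\cap B_R(0)$ the steepest descent property gives $|H_{x_1+1,x_2}(w_1)/H_{x_1+1,x_2}(i)|$ bounded by its value at the endpoint $w_1^*\in\partial B_\eps(i)$, which by Lemma \ref{saddlepointexpprop1} is at most $e^{-\eps^3 an/24}$ uniformly in $\alpha_j,\beta_j$; and similarly for $asc$. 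The bound \eqref{Vuniformbdd} on $V_{\eps_1,\eps_2}$ absorbs the $a^{(\eps_1+\eps_2)/2}$ factor into the prefactor $P_{\eps_1,\eps_2}$. This shows all three remainder pieces contribute exponentially small in $an$ after division by $P_{\eps_1,\eps_2}$.

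On the main piece near $(i,i)$ I would substitute $w_k'=i+w_k(an)^{-1/3}$, use Lemma \ref{saddlepointexpprop1} to exponentiate the cubic and quadratic parts with controlled remainders, and Taylor expand $V_{\eps_1,\eps_2}(w_1',w_2')$ around $(i,i)$ using the boundedness of $\frac{d}{dw}\tilde V_{\eps_1,\eps_2}$ from Lemma \ref{Veps1eps2formula}. After pulling out the factor $V_{\eps_1,\eps_2}(i,i)/i$ along with the $H(i)$ ratios and the factor $(an)^{-1/3}$ from the Jacobian $dw_k'=(an)^{-1/3}dw_k$, I would recognize that the resulting integrand is exactly $\frac{1}{(w_1-w_2)}\exp\bigl(-iw_1\alpha_j+iw_2\alpha_i-w_1^2\beta_j+w_2^2\beta_i-\tfrac{i}{3}w_1^3+\tfrac{i}{3}w_2^3\bigr)$ up to multiplicative errors of size $O((an)^{-1/3})+O(a^2n)$, and the contours $C_1,C_2$ are the scaled truncated tangent rays. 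Extending the rays to infinity and slightly rotating them back to $\Gamma_-,\Gamma_+$ by analyticity produces exactly $-e^{\beta_j\alpha_j-\beta_i\alpha_i+\frac{2}{3}(\beta_j^3-\beta_i^3)} i\,\tilde A(-\beta_j,\alpha_j+\beta_j^2;-\beta_i,\alpha_i+\beta_i^2)$ from \eqref{airypart}, which is the claimed limit.

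The main obstacle is the uniformity of all the error estimates in $\alpha_i,\alpha_j$ up to $\beta\log n$. This forces one to verify that the dominated bounds of the type \eqref{tempmnnmnm} really are finite: here the cubic $-r^3/12$ in the exponent beats any linear $r\alpha$ term and any polynomial $(r+s+r^4+s^4)/|r-s|$ prefactor, so the Gaussian/cubic integrals are bounded uniformly in $\alpha$, and in fact decay as $\alpha$ grows, which is precisely what is needed to turn the logarithmic growth of $\alpha$ into a harmless factor after dividing by $P_{\eps_1,\eps_2}$. Similar care is needed for the $a^2 nR_k$ remainders in Lemma \ref{saddlepointexpprop1} since $a^2 n=n^{-1+2\gamma}\to 0$ only because $\gamma<1/2$, making this the point at which the assumption $\gamma<1/2$ enters.
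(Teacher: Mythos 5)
Your proposal follows the same route as the paper's proof: deforming to the steepest descent/ascent contours, exploiting the $(w_1,w_2)\mapsto(-\overline{w_1},-\overline{w_2})$ symmetry to restrict to $\HH\times\HH$, replacing the steepest curves near $i$ by straight tangent rays to get $desc_{1/R}^\eps$ and $asc_R^\eps$, handling the remainder pieces via steepest descent and the $R$-circle bounds, and then rescaling $w_k'=i+w_k(an)^{-1/3}$ near $(i,i)$ with Lemma \ref{saddlepointexpprop1} and the Taylor expansion of $V_{\eps_1,\eps_2}$ to extract the extended-Airy kernel. The uniformity considerations you flag (the cubic in the exponent dominating the linear $\alpha$ terms, and $a^2n\to 0$ needing $\gamma<1/2$) are precisely what the paper relies on.
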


\begin{remark}[The Discrete Bessel Kernel]\label{BesselKernel}
In the case that the model parameter $a$ is such that
\begin{align}
an\rarrow 4\nu>0,
\end{align}
we see the Discrete Bessel Kernel. In particular, for $a$-dimers $e_j=(\tilde{x}_j,\tilde{y}_j)\in W_{\eps_1}\times B_{\eps_2}$ in the pre-squished graph $AD$, with $\tilde{x}_j=(x_1,x_2)$ and $\tilde{y}_j=(y_1,y_2)$ let
 \begin{align}
 \tilde{x}_j=(\frac{n}{2}+1+\tilde{\alpha}_j)\vec{e}_1+(0,2\eps_1-1),\\
 \tilde{y}_j=(\frac{n}{2}+1+\tilde{\alpha}_j)\vec{e}_1+(2\eps_2-1,0)
 \end{align}
 where $\tilde{\alpha}_j,\in 2\Z$ lies in a bounded set.
Recall the representation
\begin{align}
B_{\eps_1,\eps_2}(a,\tilde{x}_j,\tilde{y}_i)=\frac{i^{(x_2-x_1+y_1-y_2)/2}}{(2\pi i)^2}\int_{\Gamma_r}\frac{dw_1}{w_1}\int_{\Gamma_{1/r}}dw_2\frac{w_2}{w_2^2-w_1^2}\frac{H_{x_1+1,x_2}(w_1)}{H_{y_1,y_2+1}(w_2)}V_{\eps_1,\eps_2}(w_1,w_2)
\end{align}
in Corollary 2.4 in \cite{C/J}. Make the shift $\alpha_jp_n=\tilde{\alpha_j}-n(\xi_c+1/2)=\tilde{\alpha_j}-an/2+O(a^2n)$ in \eqref{Hsaddlessimpler} and use Lemma \ref{Veps1eps2formula} to get that when $\eps_1=\eps_2=0$,
\begin{align}
-aiB_{\eps_1,\eps_2}(a,\tilde{x}_j,\tilde{y}_i)\rarrow\frac{1}{(2\pi i)^2}\int_{\Gamma_r}\frac{dw_1}{w_1}\int_{\Gamma_{1/r}}dw_2\frac{w_2}{w_2^2-w_1^2}\frac{e^{\frac{\nu}{2}(w_1^2-1/w_1^2)+\tilde{\alpha}_j\log(w_1)}}{e^{\frac{\nu}{2}(w_2^2-1/w_2^2)+\tilde{\alpha}_i\log(w_2)}}\frac{w_1}{w_2}.
\end{align}
Making the change of variables $w_1^2=w, w_2^2=z$, one sees the above integral is
\begin{align}
\K_{Bessel}(\tilde{\alpha}_i,\tilde{\alpha}_j,(\nu/2)^2)=\frac{1}{(2\pi i)^2}\iint\limits_{|z|>|w|}dzdw\frac{e^{\frac{\nu}{2}(z-z^{-1}-w+w^{-1})}}{z-w}\frac{1}{z^{\tilde{\alpha}_i+1/2}w^{-\tilde{\alpha}_j+1/2}}.
\end{align} 
\end{remark}

\section{Asymptotics of $\K_{1,1}^{-1}$}\label{asympk11inv}
Let $e_i=(\tilde{x}_i,\tilde{y}_i)$, $e_j=(\tilde{x}_j,\tilde{y}_j)$ be two $a$-dimers as in \ref{cond-a-dimers}. Let $(x_1,x_2)=\tilde{x}_j\in W_{\eps_1}$, $(y_1,y_2)=\tilde{y}_i\in B_{\eps_2}$. In this section we obtain the asymptotics of $\K_{1,1}^{-1}(x_1,x_2,y_1,y_2)$  defined by \eqref{K_11}. We substitute $x_1,x_2,y_1,y_2$ into formulas \eqref{klh1}, \eqref{klh2} and obtain
\begin{align}
k_1&=\frac{1}{2}\big(p_n(\alpha_j-\alpha_i)+q_n(\beta_j-\beta_i)\big)-(2\eps_1-1)(\eps_2-1),\label{k1temp}\\
\ell_1&=\frac{1}{2}\big(p_n(\alpha_i-\alpha_j)+q_n(\beta_j-\beta_i)\big)+\eps_2-1.\label{l1temp}
\end{align}
Observe that if $\beta_j\neq \beta_i$ then $|k_1|,|l_1|=q_n|\beta_j-\beta_i|/2(1+O(\log(n)p_n/q_n)$ and if $\beta_j=\beta_i$ then $|k_1|,|l_1|=p_n|\alpha_j-\alpha_i|/2(1 +O(1/p_n))$. Hence we compute asymptotics of \eqref{EKL1} for 
\begin{align}
\ell=m(1+a'_m)>0,\quad k=m(1+a_m)>0
\label{ellk1}
\end{align} where $\ell+k$ is even, and 
\begin{align}
a_m,a'_m=o(m^{-1/2}), \quad ma^2=o(1) \text{ and } ma\rarrow \infty.\label{conditionsaergav}
\end{align} This will produce the asymptotics applicable to the regime in which $\beta_j\neq \beta_i$. To see this, set $m=\frac{1}{2}q_n|\beta_j-\beta_i|$, $ma_m=|\ell_1|-m$, $ma'_m=|k_1|-m$, (noting the absolute values in \eqref{ADPAD}) and observe the conditions \eqref{conditionsaergav}, $\log(n)p_n/q_n=o(q_n^{-1/2})$, $q_na^2=o(1)$ and $aq_n\rarrow \infty$, are indeed satisfied. The case $\beta_j=\beta_i$ is dealt with at the end of the proof of Proposition \ref{propgasker}. 
We have
\begin{align}
G(w)^\ell G(1/w)^k&=\exp\big( m(\ln(G(w))+\ln (G(1/w)))+ma_m\ln(G(w))+ma'_m\ln(G(1/w))\big)\label{Gexp}\\
&=\exp\big(mh(w)+ma_m h_1(w)+ma'_mh_1(1/w)+o(1)\big)\nonumber
\end{align}
where $h(w)=h_1(w)+h_1(1/w)$ and $h_1(w)=\ln(-\sqrt{a/2}/w)-a/(2w^2)$.
It is easy to see $\mcI[h(e^{i\theta})]=0$ and $h(e^{i\theta})=\overline{h(e^{-i\theta})}$ for $\theta\in [-\pi,\pi]$ hence the unit semi-circle (in the top half plane) is either a steepest ascent or descent path for $h$. Let $0<\eps<\pi/2$, Taylor's theorem applied to $\theta\mapsto h_1(ie^{i\theta})$  gives two numbers $\xi_1,\xi_2\in (-\eps,\eps)$ such that
\begin{align}
&h_1(ie^{i\theta})-h_1(i)=-i(1+a)\theta-a\theta^2+\mcR[\theta^3\frac{d^3}{d\theta^3}h_1(ie^{i\xi_1})]+i\mcI[\theta^3\frac{d^3}{d\theta^3}h_1(ie^{i\xi_2})]\label{h1taylor}
\end{align}
adding the above equation with its complex conjugate we obtain
\begin{align}
h(ie^{i\theta})-h(i)=-2a\theta^2+\mcR[\theta^3\frac{d^3}{d\theta^3}h(ie^{i\xi_1})].\label{htaylor}
\end{align}
From \eqref{htaylor} we see the two paths traversing along the semi-circle starting at $i$ and ending at $\pm 1$ are both paths of steepest descent.
We can calculate
\begin{align}
\frac{d^3}{d\theta^3}h_1(ie^{i\theta})]=4iae^{-2i\theta}=O(a), && \frac{d^3}{d\theta^3}h_1(-ie^{-i\theta})]=-4iae^{2i\theta}=O(a)
\label{Oaboundrem1}
\end{align}
so that the second term on the right hand side of \eqref{htaylor} is of the form $\theta^3\cdot O(a)$. 
\begin{proposition}\label{propeklasymp}
Consider $\ell, k$ as in \eqref{ellk1}. There is a fixed $\eps>0$ such that
\begin{align}
E_{k,\ell}&=\frac{i^{-k-\ell}e^{\ell h_1(i)+k h_1(-i)}}{\sqrt{4\pi} }\Big(\frac{e^{-\frac{m}{8a}(a_m'-a_m)^2(1+a)^2}}{\sqrt{2am}}\\
&+o(\frac{1}{\sqrt{am}})+O(a)+O(\frac{1}{ am})+O(\frac{a_m'+a_m}{\sqrt{am}})+O(e^{-2\eps^2am}))\Big).\nonumber
\end{align}
\begin{proof}
We use \eqref{squarerootsymmetries}, \eqref{Gsymmetries} and \eqref{Gexp} to write
\begin{align}
E_{k,\ell}=\frac{i^{-k-l}}{(1+a^2)2\pi}\mcI\Big[\int_{\Gamma_1\cap \HH}\frac{dw}{w}\frac{\exp(mh(w)+ma_mh_1(w)+m a'_m h_1(1/w))(1+o(1))}{\sqrt{w^2+2c}\sqrt{1/w^2+2c}}\Big].
\label{ekltemp}
\end{align}
Take $\eps>0$ small, we split the integral in \eqref{ekltemp} across the two regions $\Gamma_{1,\eps}$ and $(\Gamma_1\setminus \Gamma_{1,\eps})\cap \HH$ where $\Gamma_{1,\eps}=\{e^{i\theta}:\theta\in(\pi/2-\eps,\pi/2+\eps)\}$. First consider the integral over $\Gamma_{1,\eps}$, parametrise $w(\theta)=ie^{i\theta}$, $\theta\in(-\eps,\eps)$ so that it is
\begin{align}
&\int_{-\eps}^\eps\frac{\exp(mh(ie^{i\theta})+ma_mh_1(ie^{i\theta})+ma_m'h_1(-ie^{-i\theta}))(1+o(1))}{|e^{2i\theta}-2c|}id\theta\label{temp32}\\&=\int_{-\eps}^\eps \exp(mh(ie^{i\theta})+ma_mh_1(ie^{i\theta})+ma_m'h_1(-ie^{-i\theta}))(1+o(1))(1+O(a))id\theta\nonumber.
\end{align}
Note that in \eqref{temp32}, we first used \eqref{squarerootsymmetries} and then applied Taylor's theorem to $a\rarrow 1/|e^{2i\theta}-2c|(=1/|e^{2i\theta}|+O(a))$ then noted $|e^{2i\theta}|=1$.
Use \eqref{h1taylor}, \eqref{htaylor}, \eqref{Oaboundrem1} in  the exponent in \eqref{temp32},
 \begin{align}
 &mh(ie^{i\theta})+ma_mh_1(ie^{i\theta})+ma'_mh_1(-ie^{-i\theta})-C(m,a)\label{taylorsaddle1}\\
 &=-2am\theta^2-ma_mi(1+a)\theta+ma'_mi(1+a)\theta - am (a_m+a_m')\theta^2+m\theta^3\cdot O(a)\nonumber
 \end{align}
 where $C(m,a)=mh(i)+ma_mh_1(i)+ma'_mh_1(-i)$. 
 Take $\eps>0$ so small and $m$ large enough that
 \begin{align}
 -2am \theta^2+am(a_m+a'_m)\theta^2+m|\theta^3|\cdot O(a)<-am\theta^2
 \label{upperboundexp}
 \end{align}
 for $\theta\in (-\eps,\eps)$. Since $\Gamma_1$ is a descent path for $h$ with maximum real part at $i$, we apply \eqref{taylorsaddle1} then use  \eqref{upperboundexp} on the section of the integral in \eqref{ekltemp} over $\Gamma_1\cap \HH\setminus \Gamma_{1,\eps}$, this gives a constant $D>0$ such that
 \begin{align}
 \abs{\int_{\Gamma_1\setminus \Gamma_{1,\eps}\cap\HH}\frac{dw}{w}\frac{\exp(mh(w)+ma_mh_1(w)+m a'_m h_1(1/w))(1+o(1))}{\sqrt{w^2+2c}\sqrt{1/w^2+2c}}}
\leq  C(a,m)D \ e^{-am\eps^2}.
 \end{align}
 Now we have a lemma which gives the leading order term.
 \begin{lemma}
 We have
 \begin{align}
& \int_{-\eps}^\eps\exp(mh(ie^{i\theta})+ma_mh_1(ie^{i\theta})+ma'_mh_1(-ie^{-i\theta}))d\theta\label{inttemp1}\\&=e^{C(m,a)}\Big(\sqrt{\frac{\pi}{2am}}e^{\frac{m}{8a}(a_m'-a_m)^2(1+a)^2}+O(1/(am))+O\Big(\frac{a_m+a'_m}{\sqrt{am}}\Big)+O(e^{-2\eps^2am})\Big)\nonumber
 \end{align}
 \begin{proof}
 Use \eqref{taylorsaddle1} on the left hand side of \eqref{inttemp1} and perform a change of variables $\theta\rarrow \theta/\sqrt{2am}$ to see \eqref{inttemp1} is equal to 
 \begin{align}
 \frac{e^{C(a,m)}}{\sqrt{2am}}\int_{-\eps\sqrt{2am}}^{\eps\sqrt{2am}}\exp(-\theta^2+\frac{i\theta}{\sqrt{2am}}(ma'_m-ma_m)(1+a)+\theta^3\cdot O(\frac{1}{\sqrt{am}})-\frac{a_m+a'_m}{2}\theta^2).\label{temp343}
 \end{align}
 We can use \eqref{upperboundexp} (with $\theta$ substituted for $\theta/\sqrt{2am}$) and the bound $|e^t-1|\leq |t|e^{|t|}$ to get that the absolute value of the difference between \eqref{temp343} and
 \begin{align}
  \frac{e^{C(a,m)}}{\sqrt{2am}}\int_{-\eps\sqrt{2am}}^{\eps\sqrt{2am}}e^{-\theta^2+\sqrt{\frac{m}{2a}}i\theta(a'_m-a_m)(1+a)}d\theta\label{temp413}
 \end{align}
 is bounded above by
 \begin{align}
& \frac{e^{C(a,m)}}{\sqrt{2am}}\int_{-\eps\sqrt{2am}}^{\eps\sqrt{2am}}\big(O(\frac{1}{\sqrt{am}})|\theta|^3+|\frac{a_m+a'_m}{2}|\theta^2\big)e^{-\theta^2/2}d\theta\\&< \frac{e^{C(a,m)}}{\sqrt{2am}}\big(O(\frac{1}{\sqrt{am}})\int_{-\infty}^{\infty}|\theta|^3e^{-\theta^2/2}d\theta+|\frac{a_m+a'_m}{2}|\int_{-\infty}^\infty\theta^2e^{-\theta^2/2}d\theta\big).\nonumber\\
 \end{align}
Hence we have two of the remainders in \eqref{inttemp1}.
 Observe that 
 \begin{align}
 \frac{e^{C(a,m)}}{\sqrt{2am}}\int_{-\infty}^\infty e^{-\theta^2+\sqrt{\frac{m}{2a}}i\theta(a'_m-a_m)(1+a)}d\theta=e^{C(a,m)}\sqrt{\frac{\pi}{2am}}e^{-\frac{m}{8a}(a'_m-a_m)^2(1+a)^2}\label{temp2}
 \end{align}
 is the Fourier transform of a Gaussian function. Finally, we see that the absolute value of the difference between \eqref{temp413} and \eqref{temp2} is $O(e^{-2\eps^2am})$ by a standard bound on the complementary error function \cite{DLMF}.
 \end{proof}
  \end{lemma}
 Further bounds on the difference between \eqref{temp32} and \eqref{inttemp1} yield the result.
\end{proof}
\end{proposition}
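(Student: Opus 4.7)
The approach is a classical saddle-point analysis with one non-trivial feature: a linear term in the rescaled exponent that does not vanish and so must be retained inside the integral rather than Taylor-expanded, producing a Gaussian prefactor in the final answer.

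First I would apply the symmetries \eqref{Gsymmetries} and \eqref{squarerootsymmetries} together with $G(\bar w) = \overline{G(w)}$ to fold the contour integral \eqref{EKL1} onto the upper unit semicircle $\Gamma_1 \cap \HH$, rewriting $E_{k,\ell}$ (up to the factor $i^{-k-\ell}/(2(1+a^2))$) as the imaginary part of an integral whose integrand is given by \eqref{Gexp} divided by the smooth factor $w\sqrt{w^2+2c}\sqrt{1/w^2+2c}$. From \eqref{htaylor}, the unit circle is a steepest-descent contour for $h$ with a degenerate (double) critical point at $w = i$. I would fix a small $\eps > 0$ and split the semicircle into the short arc $\{ie^{i\theta} : |\theta| < \eps\}$ around $i$ and its complement. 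On the complement, the descent property of $h$ sharpened by absorbing the perturbations $ma_m h_1(w)$, $ma'_m h_1(1/w)$, and the cubic remainder into a fraction of the leading quadratic $-2am\theta^2$ (legitimate since $a_m, a'_m = o(m^{-1/2})$ and the cubic coefficient is $O(a)$ by \eqref{Oaboundrem1}) yields an exponentially small tail of order $e^{-am\eps^2}$, absorbed into the $O(e^{-2\eps^2 am})$ remainder.

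On the small arc I would parametrise $w = ie^{i\theta}$. The smooth prefactor equals $1 + O(a)$ uniformly in $\theta$ since $|w|=1$ and $c = a/(1+a^2)$, which is the source of the $O(a)$ remainder. Applying \eqref{h1taylor} and its complex conjugate, the exponent equals $C(m,a) + \Phi(\theta)$ where $C(m,a) := m h(i) + m a_m h_1(i) + m a'_m h_1(-i)$ yields the leading prefactor $e^{\ell h_1(i) + k h_1(-i)}$, and
\[
\Phi(\theta) = -2am\theta^2 + i(1+a)(ma'_m - ma_m)\theta - am(a_m+a'_m)\theta^2 + m\theta^3\cdot O(a),
\]
with the $O(a)$ uniform in $\theta$ by \eqref{Oaboundrem1}. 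Rescaling $\theta = \phi/\sqrt{2am}$ turns the leading quadratic into $-\phi^2$ and the linear term into $ib\phi$ with $b = \sqrt{m/(2a)}\,(a'_m - a_m)(1+a)$; the cubic becomes $O(\phi^3/\sqrt{am})$ and the subleading quadratic becomes $\tfrac{1}{2}(a_m + a'_m)\phi^2$. Peeling these off with $|e^t - 1| \leq |t|e^{|t|}$ and Gaussian moment bounds produces the $O(1/(am))$ and $O((a_m+a'_m)/\sqrt{am})$ error terms. Extending the rescaled integration to all of $\R$ costs only $O(e^{-2\eps^2 am})$ via the standard complementary error function bound, after which the remaining integral is the exact Gaussian Fourier transform
\[
\int_\R e^{-\phi^2 + ib\phi}\, d\phi = \sqrt{\pi}\, e^{-b^2/4} = \sqrt{\pi}\, e^{-\frac{m}{8a}(a'_m - a_m)^2(1+a)^2}.
\]
Dividing by $\sqrt{2am}$, combining with $i^{-k-\ell}$ from \eqref{EKL1} and the $1/\sqrt{4\pi}$ absorbing $\sqrt{\pi}/(2\pi)$, and taking imaginary parts produces the claimed asymptotic.

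The main obstacle is precisely that the linear coefficient $b = \sqrt{m/(2a)}(a'_m - a_m)(1+a)$ need not tend to zero under \eqref{conditionsaergav}: it is only $o(1/\sqrt{a})$, which can blow up as $a \to 0$. Consequently $e^{ib\phi}$ cannot be Taylor-expanded inside the integral, and one must evaluate the Gaussian Fourier transform in closed form; this is exactly what generates the non-trivial Gaussian factor in the leading-order asymptotic. Every other estimate (the arc tail, the smooth-prefactor $O(a)$, and the cubic and subleading quadratic errors) is routine once the rescaling has been performed.
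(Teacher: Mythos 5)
Your proposal is correct and follows essentially the same route as the paper: fold to the upper semicircle via the symmetries, split off the arc near $i$, apply the Taylor expansions \eqref{h1taylor}--\eqref{htaylor}, rescale by $\sqrt{2am}$, peel off the cubic and subleading quadratic with $|e^t-1|\le|t|e^{|t|}$, extend to $\R$ via the complementary error function, and evaluate the Gaussian Fourier transform exactly. You also correctly flag the one non-routine point, namely that $b=\sqrt{m/(2a)}(a_m'-a_m)(1+a)$ need not vanish and so the linear phase must be kept inside the Gaussian integral, which is exactly what produces the leading factor $e^{-\frac{m}{8a}(a_m'-a_m)^2(1+a)^2}$.
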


Now we have enough to give the proposition concerning the asymptotics of $\K_{1,1}^{-1}$. 
\begin{proposition} \label{propgasker} Let $e_i=(\tilde{x}_i,\tilde{y}_i)$, $e_j=(\tilde{x}_j,\tilde{y}_j)$ be two $a$-dimers as in condition \ref{cond-a-dimers} and let $(x_1,x_2)=\tilde{x}_j\in W_{\eps_1}$, $(y_1,y_2)=\tilde{y}_i\in B_{\eps_2}$, then
\begin{align}
&\big(i^{x_1-y_1-1}\mathcal{G}^{\frac{2+x_1-x_2-y_1+y_2}{2}}\mathrm{g}_{\eps_1,\eps_2}^{-1}(an)^{1/3}\big)\K_{1,1}^{-1}(x_1,x_2,y_1,y_2)\label{limgasker}\\
&=e^{\beta_j\alpha_j-\beta_i\alpha_i+\frac{2}{3}(\beta_j^3-\beta_i^3)} \ind_{\beta_i<\beta_j}\Psi(-\beta_j,\alpha_j+\beta_j^2;-\beta_i,\alpha_i+\beta_i^2)(1+o(1))\nonumber
\end{align}
in the limit $n\rarrow \infty$.
\begin{proof}
Let $\beta_j\neq \beta_i$. From the formulas \eqref{klh1}, \eqref{klh2}, for $n$ large enough we have
\begin{align}
&|k_1|=\frac{1}{2}q_n|\beta_j-\beta_i|+\frac{\sigma}{2}p_n(\alpha_j-\alpha_i)-\sigma(2\eps_1-1)(\eps_2-1),\\
&|\ell_1|=\frac{1}{2}q_n|\beta_j-\beta_i|+\frac{\sigma}{2}p_n(\alpha_i-\alpha_j)+\sigma(\eps_2-1),\\
&|k_2|=\frac{1}{2}q_n|\beta_j-\beta_i|+\frac{\sigma}{2}p_n(\alpha_j-\alpha_i)+\sigma\eps_2(2\eps_1-1),\\
&|\ell_2|=\frac{1}{2}q_n|\beta_j-\beta_i|+\frac{\sigma}{2}p_n(\alpha_i-\alpha_j)+\sigma\eps_2.
\end{align}
where $\sigma:=\text{sign}(\beta_j-\beta_i)$. 
We use Proposition \ref{propeklasymp} to obtain formulas for the asymptotics of $E_{k_1,\ell_1}$ and  $E_{k_2,\ell_2}$.
For instance, take $m=\frac{1}{2}q_n|\beta_j-\beta_i|$, $ma_m=|\ell_1|-m$, $ma'_m=|k_1|-m$ in Proposition \ref{propeklasymp} to obtain the asymptotics of $E_{k_1,\ell_1}$. Since $p_n^2/(aq_n)=1$, a Taylor expansion gives
\begin{align}
\frac{e^{-\frac{m}{8a}(a'_m-a_m)^2(1+a)^2}}{\sqrt{2am}}=\frac{e^{-\frac{(\alpha_j-\alpha_i)^2}{4|\beta_j-\beta_i|}}\big(1+O(\frac{1}{(an)^{1/3}})\big)}{(an)^{1/3}\sqrt{|\beta_j-\beta_i|}}.
\end{align}
We also have that $e^{h(i)}=i\mathcal{G}$ and $e^{h(-i)}=\mathcal{G}/i$ where we recall \eqref{Gcurly}. So we obtain
\begin{align}
E_{k_1,\ell_1}=& \ i^{\sigma p_n(\alpha_i-\alpha_j)+q_n(\beta_i-\beta_j)}\mathcal{G}^{q_n|\beta_j-\beta_i|}\frac{e^{-\frac{(\alpha_j-\alpha_i)^2}{4|\beta_j-\beta_i|}}}{(an)^{1/3}\sqrt{4\pi |\beta_j-\beta_i|}}(1+o(1))\\&
 \Big(i^{2\sigma(\eps_2-1)(2\eps_1-1)}\big(\frac{a}{2}\big)^{\sigma(1-\eps_1)(\eps_2-1)}e^{\sigma a(1-\eps_1)(\eps_2-1)}\nonumber\Big).
\end{align}
Similarly,
\begin{align}
E_{k_2,\ell_2}=& \ i^{\sigma p_n(\alpha_i-\alpha_j)+q_n(\beta_i-\beta_j)}\mathcal{G}^{q_n|\beta_j-\beta_i|}\frac{e^{-\frac{(\alpha_j-\alpha_i)^2}{4|\beta_j-\beta_i|}}}{(an)^{1/3}\sqrt{4\pi |\beta_j-\beta_i|}}(1+o(1))\\&
 \Big(i^{2\sigma \eps_2(1-2\eps_1)}\big(\frac{a}{2}\big)^{\sigma\eps_1\eps_2}e^{\sigma a\eps_1\eps_2}\nonumber\Big).
\end{align}
These expressions go into the formula for $\K_{1,1}^{-1}$ given by \eqref{K_11}. We recall \eqref{K_11} as
\begin{align}
\label{temp0940}\K_{1,1}^{-1}(x_1,x_2,y_1,y_2)&=-i^{1+h(\eps_1,\eps_2)}(a^{\eps_2}E_{k_1,\ell_1}+a^{1-\eps_2}E_{k_2,\ell_2})\\
&=-i^{1+h(\eps_1,\eps_2)+\sigma p_n(\alpha_i-\alpha_j)+q_n(\beta_i-\beta_j)}\mathcal{G}^{q_n|\beta_j-\beta_i|}\frac{e^{-\frac{(\alpha_j-\alpha_i)^2}{4|\beta_j-\beta_i|}}W_{\eps_1,\eps_2}}{(an)^{1/3}\sqrt{4\pi |\beta_j-\beta_i|}}(1+o(1))\nonumber
\end{align}
where $W_{\eps_1,\eps_2}$ is the expression
\begin{align}
W_{\eps_1,\eps_2}= a^{\eps_2}i^{2\sigma(\eps_2-1)(2\eps_1-1)}\big(\frac{a}{2}\big)^{\sigma(1-\eps_1)(\eps_2-1)}e^{\sigma a(1-\eps_1)(\eps_2-1)}+a^{1-\eps_2}i^{2\sigma \eps_2(1-2\eps_1)}\big(\frac{a}{2}\big)^{\sigma\eps_1\eps_2}e^{\sigma a\eps_1\eps_2}.
\end{align}
Since
\begin{align}
\label{temp3901}\mathcal{G}^{\frac{2+x_1-x_2-y_1+y_2}{2}}=\mathcal{G}^{-q_n(\beta_j-\beta_i)}\big(\frac{a}{2}\big)^{\frac{2-\eps_1-\eps_2}{2}}e^{\frac{a}{2}(2-\eps_1-\eps_2)},
\end{align}
substituting \eqref{temp0940} into \eqref{limgasker} we see that the factor $\mathcal{G}^{q_n|\beta_j-\beta_i|-q_n(\beta_j-\beta_i)}$ appearing in the left-hand side of \eqref{limgasker} dominates the asymptotics. Indeed, if $\beta_j<\beta_i$ then $\mathcal{G}^{q_n|\beta_j-\beta_i|-q_n(\beta_j-\beta_i)}$ tends to zero and if $\beta_j>\beta_i$ it is is equal to one. So this factor converges to the indicator function on the right-hand side of \eqref{limgasker}. 

Now we assume that $\beta_j>\beta_i$ so that $\sigma=1$. We use \eqref{temp0940}, \eqref{temp3901} together with $i^{x_1-y_1-1}=i^{p_n(\alpha_j-\alpha_i)-q_n(\beta_j-\beta_i)-2\eps_2}$ to write the left-hand side of \eqref{limgasker} as
\begin{align}
\ind_{\beta_j>\beta_i}\frac{e^{-\frac{(\alpha_j-\alpha_i)^2}{4(\beta_j-\beta_i)}}}{\sqrt{4\pi(\beta_j-\beta_i)}}\tilde{W}_{\eps_1,\eps_2}+o(1)
\end{align}
where 
\begin{align}
\tilde{W}_{\eps_1,\eps_2}=-i^{1+h(\eps_1,\eps_2)+2q_n(\beta_i-\beta_j)-2\eps_2}\big(\frac{a}{2}\big)^{\frac{2-\eps_1-\eps_2}{2}}e^{\frac{a}{2}(2-\eps_1-\eps_2)}\mathrm{g}_{\eps_1,\eps_2}^{-1}W_{\eps_1,\eps_2}.
\end{align}
By substitution we get that
\begin{align}
\frac{e^{-\frac{(\alpha_j-\alpha_i)^2}{4(\beta_j-\beta_i)}}}{\sqrt{4\pi(\beta_j-\beta_i)}}=e^{\beta_j\alpha_j-\beta_i\alpha_i+\frac{2}{3}(\beta_j^3-\beta_i^3)} \Psi(-\beta_j,\alpha_j+\beta_j^2;-\beta_i,\alpha_i+\beta_i^2),
\end{align}
so we see that we will be done if we show $\tilde{W}_{\eps_1,\eps_2}\rarrow 1$ as $a\rarrow 0$ (i.e $n\rarrow \infty$).
Because of \eqref{alphabetarestric} we see that $i^{2q_n(\beta_i-\beta_j)}=1$ and from \eqref{temp23fd} and \eqref{HHH} we get
\begin{align}
\tilde{W}_{\eps_1,\eps_2}=-i^{2\eps_2\eps_1}\big(\frac{a}{2}\big)^{1-\eps_1-\eps_2}e^{\frac{a}{2}(2-\eps_1-\eps_2)}W_{\eps_1,\eps_2}.
\end{align} From here one can easily check the four cases $\eps_1,\eps_2\in \{0,1\}$ that $\tilde{W}_{\eps_1,\eps_2}\rarrow 1$. For example 
\begin{align}
\tilde{W}_{1,1}=-i^2\frac{2}{a}\big(W_{1,1}\big)=\frac{2}{a}\big(a(1+O(a))-\frac{a}{2}(1+O(a))\big)=(2(1+O(a))-(1+O(a)))\rarrow 1.
\end{align}
Hence we have proven \eqref{limgasker} in the case $\beta_j\neq \beta_i$. Now assume $\beta_i=\beta_j$ and $\alpha_j\neq \alpha_i$ so that
\begin{align}
|\ell_s|,|k_s|=\frac{1}{2}p_n|\alpha_j-\alpha_i|(1+O(an)^{-1/3})\leq d_1(an)^{1/3},\quad s=1,2
\end{align}
for some $d_1>0$.
The modulus of \eqref{limgasker} is bounded above by 
\begin{align}
&|\mathcal{G}|^{\frac{2+x_1-x_2-y_1+y_2}{2}}\big(\frac{a}{2}\big)^{-\frac{\eps_1+\eps_2}{2}}(1+O(a))(an)^{1/3}\big(a^{\eps_2}|E_{k_1,\ell_1}+a^{1-\eps_2}|E_{k_2,\ell_2}|\big)\label{tempbound1}\\
&\leq Ca^{2-\eps_1}(an)^{1/3}\big(|E_{k_1,\ell_1}|+a^{1-2\eps_2}|E_{k_2,\ell_2}|\big)\nonumber
\end{align} for some $C>0$, where we used \eqref{xlocs}, \eqref{ylocs}, \eqref{temp23fd} and \eqref{K_11}. From \eqref{EKL1} we have 
\begin{align}
|E_{k_s,\ell_s}|=\frac{1}{4\pi(1+a^2)}|\int_{\Gamma_1}\frac{dw}{w}\frac{G(w)^{\ell_s}G(1/w)^{k_s}}{\sqrt{w^2+2c}\sqrt{1/w^2+2c}}\leq \tilde{C}\sup|G(w)|^{\ell_s}|G(1/w)|^{k_s}.\label{temp2030}
\end{align}
for some $\tilde{C}>0$.
We also have
\begin{align}
|G(w)|^{\ell_s}|G(1/w)|^{k_s}\leq |G(w)G(1/w)|^{d_1(an)^{1/3}}\leq \big(\sqrt{\frac{a}{2}}\big)^{d_1(an)^{1/3}}(1+O(a))^{d_1(an)^{1/3}}.
\end{align}
Hence
\begin{align}
|E_{k_s,\ell_s}|\leq C\big(\frac{a}{2}\big)^{d_1(an)^{1/3}}.
\end{align} for some $C>0$. Inserting this into the right-hand side of \eqref{tempbound1} and taking the limit $n\rarrow \infty$, we see \eqref{limgasker} tends to zero.

Finally, we assume that $\beta_j=\beta_i$ and $\alpha_j=\alpha_i$. We get 
\begin{align}
|k_1|=|\ell_1|=1-\eps_2,\quad |k_2|=|\ell_2|=\eps_2,
\end{align}
similar to before
\begin{align}
|G(w)|^{\ell_1}|G(1/w)|^{k_1}\leq C|\sqrt{\frac{a}{2}}(1+O(a))|^{2(1-\eps_2)}\leq Ca^{1-\eps_2}
\end{align}
and 
\begin{align}
|G(w)|^{\ell_2}|G(1/w)|^{k_2}\leq Ca^{\eps_2}
\end{align}
for some $C>0$. Hence we get
\begin{align}
|E_{k_1,\ell_1}|\leq C'a^{1-\eps_2},\quad |E_{k_2,\ell_2}|\leq C'a^{\eps_2}
\end{align}
for some $C'>0$.
Inserting these two bounds into the right-hand side of \eqref{tempbound1} and taking the limit $n\rarrow \infty$, we see \eqref{limgasker} tends to zero once again. This completes the proof.
\end{proof}
\end{proposition}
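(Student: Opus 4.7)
The plan is to split on cases of $\beta_i,\beta_j$ and apply Proposition \ref{propeklasymp} in the dominant case. First I would handle the main case $\beta_j\neq \beta_i$. Reading off $k_1,\ell_1,k_2,\ell_2$ from \eqref{klh1}--\eqref{klh2} and \eqref{k1temp}--\eqref{l1temp}, with $\sigma:=\operatorname{sign}(\beta_j-\beta_i)$, the quantities $|k_s|,|\ell_s|$ are dominated by $\tfrac{1}{2}q_n|\beta_j-\beta_i|$ with corrections of order $p_n|\alpha_j-\alpha_i|$ and $O(1)$, which together satisfy the hypotheses \eqref{conditionsaergav} of Proposition \ref{propeklasymp} (using $p_n^2/(aq_n)=1$, $q_na^2=o(1)$, $aq_n\to\infty$). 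I would then set $m=\tfrac{1}{2}q_n|\beta_j-\beta_i|$, $ma_m=|\ell_s|-m$, $ma_m'=|k_s|-m$, and plug into Proposition \ref{propeklasymp} to expand
\begin{align*}
\frac{e^{-\frac{m}{8a}(a_m'-a_m)^2(1+a)^2}}{\sqrt{2am}}=\frac{e^{-(\alpha_j-\alpha_i)^2/(4|\beta_j-\beta_i|)}}{(an)^{1/3}\sqrt{|\beta_j-\beta_i|}}(1+o(1))
\end{align*}
by Taylor expansion. Using $e^{h(\pm i)}=\pm i\mathcal{G}$, this produces formulas for $E_{k_1,\ell_1}$ and $E_{k_2,\ell_2}$ each carrying a factor $\mathcal{G}^{q_n|\beta_j-\beta_i|}$ and a phase $i^{\sigma p_n(\alpha_i-\alpha_j)+q_n(\beta_i-\beta_j)}$, with $\eps_1,\eps_2$-dependent correction factors.

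Next I would substitute these expansions into the formula \eqref{K_11} for $\K_{1,1}^{-1}$ and multiply by the prefactor $i^{x_1-y_1-1}\mathcal{G}^{(2+x_1-x_2-y_1+y_2)/2}\mathrm{g}_{\eps_1,\eps_2}^{-1}(an)^{1/3}$ from \eqref{limgasker}. Since $(2+x_1-x_2-y_1+y_2)/2$ equals $-q_n(\beta_j-\beta_i)+(2-\eps_1-\eps_2)$, the $\mathcal{G}$ contribution collapses to $\mathcal{G}^{q_n(|\beta_j-\beta_i|-(\beta_j-\beta_i))}$, which vanishes when $\beta_j<\beta_i$ and equals $1$ when $\beta_j>\beta_i$: this is how the indicator $\ind_{\beta_i<\beta_j}$ emerges. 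On the surviving branch $\sigma=+1$, what remains after the $i^{x_1-y_1-1}$ phase cancels the dominant phase from $E_{k_s,\ell_s}$ (using \eqref{xlocs}--\eqref{alphabetarestric} so $i^{2q_n(\beta_i-\beta_j)}=1$) is the Gaussian factor times an algebraic expression $\tilde W_{\eps_1,\eps_2}$ in $a$. A direct rewrite
\begin{align*}
\frac{e^{-(\alpha_j-\alpha_i)^2/(4(\beta_j-\beta_i))}}{\sqrt{4\pi(\beta_j-\beta_i)}}=e^{\beta_j\alpha_j-\beta_i\alpha_i+\tfrac{2}{3}(\beta_j^3-\beta_i^3)}\Psi(-\beta_j,\alpha_j+\beta_j^2;-\beta_i,\alpha_i+\beta_i^2)
\end{align*}
reduces the problem to showing $\tilde W_{\eps_1,\eps_2}\to 1$. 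This is a routine case-check in $(\eps_1,\eps_2)\in\{0,1\}^2$, using \eqref{temp23fd} and $h(\eps_1,\eps_2)$ from \eqref{HHH}; the cases $(1,1)$ and $(0,0)$ involve a cancellation between the $E_{k_1,\ell_1}$ and $E_{k_2,\ell_2}$ contributions that I would verify explicitly.

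Finally I would dispose of the degenerate cases where Proposition \ref{propeklasymp} does not apply. If $\beta_j=\beta_i$ but $\alpha_j\neq \alpha_i$, then $|k_s|,|\ell_s|\lesssim p_n|\alpha_j-\alpha_i|\asymp(an)^{1/3}$, and the simple bound $|G(w)G(1/w)|\leq \sqrt{a/2}(1+O(a))$ on $\Gamma_1$ gives $|E_{k_s,\ell_s}|\leq C(a/2)^{c(an)^{1/3}}$ via \eqref{EKL1}. Multiplying by the $(an)^{1/3}a^{-(\eps_1+\eps_2)/2}$-scale prefactor in \eqref{limgasker} still yields a bound tending to zero, matching $\Psi=0$ on $\beta_j=\beta_i$. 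The diagonal case $\beta_j=\beta_i$, $\alpha_j=\alpha_i$ gives $|k_s|,|\ell_s|\in\{0,1\}$, so $|E_{k_s,\ell_s}|=O(a^{\eps_2})+O(a^{1-\eps_2})$ and again multiplying by the prefactor yields $o(1)$.

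The main obstacle is bookkeeping: one must carefully track the phase $i^{(\cdot)}$, the $\mathcal{G}^{(\cdot)}$ factors with their exact exponents, and the $\eps_1,\eps_2$-dependent algebraic constants from $E_{k_1,\ell_1}$ versus $E_{k_2,\ell_2}$ so that everything collapses to exactly $\ind_{\beta_i<\beta_j}\Psi(\cdot)$ with the correct change of variables $\tau=-\beta$, $\xi=\alpha+\beta^2$. The case analysis of $\tilde W_{\eps_1,\eps_2}\to 1$ is where most of the verification work sits, since here two leading-order-cancelling terms need to be tracked to subleading order in $a$.
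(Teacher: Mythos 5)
Your proposal is correct and follows essentially the same route as the paper's own proof: the same case split ($\beta_j\neq\beta_i$ via Proposition~\ref{propeklasymp}, then the two degenerate cases $\beta_j=\beta_i$ with $\alpha_j\neq\alpha_i$ or $\alpha_j=\alpha_i$), the same choice $m=\tfrac12 q_n|\beta_j-\beta_i|$, the same collapse of $\mathcal{G}^{q_n(|\beta_j-\beta_i|-(\beta_j-\beta_i))}$ to the indicator, and the same reduction to the case-check $\tilde W_{\eps_1,\eps_2}\to 1$. You correctly flag that the $(\eps_1,\eps_2)$ bookkeeping in $\tilde W_{\eps_1,\eps_2}$ is the only part needing careful verification, which matches where the paper's own effort is concentrated.
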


Now we prove Theorem \ref{extendedAiryKernelLimit}.
\begin{proof}[Proof of Theorem \ref{extendedAiryKernelLimit}]\label{proofextendedairy}
By Lemma \ref{B*smallmaindiag}, $|B^*|\leq C_1e^{-C_2n}$.
From proposition \ref{propgasker} and \ref{ThmInvKast} we see we have already established convergence to the Gaussian part of the extended-Airy kernel. 
Hence we need to show
\begin{align}
&i^{x_1-y_1-1}\mathcal{G}^{\frac{2+x_1-x_2-y_1+y_2}{2}}\mathrm{g}_{\eps_1,\eps_2}^{-1}(an)^{1/3}B_{\eps_1,\eps_2}(a,x_1,x_2,y_1,y_2) \\
&\rarrow e^{\beta_j\alpha_j-\beta_i\alpha_i+\frac{2}{3}(\beta_j^3-\beta_i^3)}  \tilde{A}(-\beta_j,\alpha_j+\beta_j^2;-\beta_i,\alpha_i+\beta_i^2)\nonumber
\end{align}
in the limit $n\rarrow \infty$.

Recall that in \eqref{Bepstemp51} we wrote $B_{\eps_1,\eps_2}(a,\tilde{x}_j,\tilde{y}_i)=2P_{\eps_1,\eps_2}(\tilde{x}_j,\tilde{y}_i)\tilde{B}_{\eps_1,\eps_2}(a,\tilde{x}_j,\tilde{y}_i)$ where the prefactor was given by \eqref{prefact}.
So from Proposition \ref{Btildeairy} all we need to do is show that
\begin{align}
i^{x_1-y_1-1}\mathcal{G}^{\frac{2+x_1-x_2-y_1+y_2}{2}}\mathrm{g}_{\eps_1,\eps_2}^{-1}(an)^{1/3}2P_{\eps_1,\eps_2}(\tilde{x}_j,\tilde{y}_i)\label{temp135a}
\rarrow -i^{-1},\nonumber
\end{align}
however this follows from Lemmas \ref{labelH(i)simple}, \ref{Veps1eps2formula} and the bound 
\begin{align}
\frac{\mathcal{G}}{|G(i)|}\leq e^{Ca^2}
\end{align} which follows by \eqref{logGtaylor}.
\end{proof}

\newpage

\section{Backtracking dimers}\label{backtracksection}
In this section we show that the probability of seeing backtracking dimers along a line on the main diagonal in the lower left quadrant is small. For a fixed $t$, define $\gamma'_t$ as the straight line parallel to $\vec{e}_1$, starting at a boundary face in the lower left quadrant and which ends at the face with position $(n(1+\xi_c)+1+\alpha p_n)\vec{e}_1+t'q_n\vec{e}_2$ on $\partial S^T$, where $t'=(2\floor{\frac{tq_n-1}{2}}+2)/q_n$.
\begin{proposition}\label{backtrackingforheightfn}
Fix $t\in \R$, we have
\begin{align}
\P(\{\exists e\in \gamma_t' \text{ which is backtracking} \})\leq \sum_{\substack{e\in \gamma_t';\\e \text{  backtracking}}}\P(e\in \omega)\rarrow 0
\end{align}
as $n\rarrow \infty$.
\begin{proof}Fix $0<\eps<1/2$ and extend $\gamma_t'$ up to the point $n(1-\eps)\vec{e}_1+t'q_n\vec{e}_2$. We prove the statement for this longer path instead. Then at the end of this proof, we will extend the argument to showing there are no back tracknig
Index the backtracking edges which intersect $\gamma_t'$ as follows; they have vertices $(y,x)\in B_1\times W_1$ such that
\begin{align}
x=(x_1,x_2)=(i-t'q_n,i+t'q_n+1)\\
y=(y_1,y_2)=(i-t'q_n+1,i+t'q_n)
\end{align}
where $2\Z+1\ni i\leq n(1-\eps)$ and $i\geq |t'q_n|$. Since $K_{a,1}(b,w)=ai$ for $e\in \gamma'_{t}$, $|\K_{1,1}^{-1}(w,b)|\leq Ca$ by lemma \ref{boundK^-1} and $|B^*_{1,1}(w,b)|\leq C_1e^{-C_2n}$ by Lemma \ref{B*smallmaindiag}, we have that
\begin{align}
\P(e\in \omega)=-aiB_{1,1}(w,b)+O(a^2)+O(e^{-C_2n}).
\end{align}
By subadditivity,
\begin{align}
\P(\{\exists e\in \gamma_t' \text{ which is backtracking} \})\leq \sum_{\substack{e\in \gamma_t';\\e \text{  backtracking}}}\P(e\in \omega).\label{subadditbound}
\end{align}
Hence, we if prove
\begin{align}
\sum_{\substack{e\in \gamma_t';\\e \text{  backtracking}}}-iB_{1,1}(w,b)=O(na)\label{temp30f3l}
\end{align}
we will be done, as $O(na^2)\rarrow 0$ and there are at most $n$ edges intersected by $\gamma_t'$, so we would have the right hand side of \eqref{subadditbound} as $O(na^2)\rarrow 0$. So now we focus on proving \eqref{temp30f3l}.
We substitute $x,y$ into \eqref{Hfunc} (and then into \eqref{Beps1eps2}). We get
\begin{align}
\frac{H_{x_1+1,x_2}(w_1)}{H_{y_1.y_2+1}(w_2)}=\Big(\frac{w_1}{w_2}\Big)^{n/2}\Big(\frac{G(w_1)G(w_2^{-1})}{G(w_1^{-1})G(w_2)}\Big)^{(n-i-1)/2}\Big(\frac{G(w_1)G(w_1^{-1})}{G(w_2)G(w_2^{-1})}\Big)^{t'q_n/2}.
\end{align}
The sum in \eqref{temp30f3l} is then the sum over indices $2\Z+1\ni i\leq n(1-\eps)$ such that $i\geq |t'q_n|$. Make the change of indices $i'=i-|t'q_n|-1$ so that the sum is over $2\Z\ni i' \leq n(1-\eps)-|t'q_n|$ such that $i'\geq 0$. By the geometric sum formula,
\begin{align}
\sum_{i'}\Big(\frac{G(w_2)G(w_1^{-1})}{G(w_1)G(w_2^{-1})}\Big)^{i'/2}=\sum_{\substack{0\leq i \in \Z\\ i\leq (n(1-\eps)-|t'q_n|)/2}}\Big(\frac{G(w_2)G(w_1^{-1})}{G(w_1)G(w_2^{-1})}\Big)^{i}=\frac{1-\big(\frac{G(w_2)G(w_1^{-1})}{G(w_1)G(w_2^{-1})}\big)^{(n(1-\eps)-|t'q_n|)/2+1}}{1-\big(\frac{G(w_2)G(w_1^{-1})}{G(w_1)G(w_2^{-1})}\big)}.
\end{align}
Hence we get that
\begin{align}
\label{sumbacktrackB}&\sum_{\substack{e\in \gamma_t';\\e \text{  backtracking}}}-iB_{1,1}(w,b)\\ &=\frac{1}{(2\pi i)^2}\int_{\Gamma_r}\frac{dw_1}{w_1}\int_{\Gamma_{1/r}}dw_2\frac{V_{1,1}(w_1,w_2)}{w_2-w_1}\Big(\frac{w_1}{w_2}\Big)^{n/2}\Big(\frac{G(w_1)G(1/w_2)}{G(w_2)G(1/w_1)}\Big)^{n/2}\nonumber\\
&\quad\times\frac{G(w_1)G(1/w_2)}{G(w_1)G(1/w_2)-G(w_2)G(1/w_1)}\Big(\frac{G(1/w_2)}{G(1/w_1)}\Big)^{(-|t'q_n|-t'q_n)/2}\Big(\frac{G(w_1)}{G(w_2)}\Big)^{(-|t'q_n|+t'q_n)/2}\nonumber\\
&\quad- \frac{1}{(2\pi i)^2}\int_{\Gamma_r}\frac{dw_1}{w_1}\int_{\Gamma_{1/r}}dw_2\frac{V_{1,1}(w_1,w_2)}{w_2-w_1}\Big(\frac{w_1}{w_2}\Big)^{n/2}\Big(\frac{G(1/w_2)}{G(1/w_1)}\Big)^{(-|t'q_n|-t'q_n)/2}\Big(\frac{G(w_1)}{G(w_2)}\Big)^{(-|t'q_n|+t'q_n)/2}\nonumber\\&\quad\times\frac{G(w_1)G(1/w_2)}{G(w_1)G(1/w_2)-G(w_2)G(1/w_1)}\Big(\frac{G(w_2)G(1/w_1)}{G(w_1)G(1/w_2)}\Big)^{(-|t'q_n|-n\eps)/2+1}.\nonumber
\end{align}
To analyse the two previous integrals, we need a lemma.
\begin{lemma}\label{doublepolelemma}
Fix $0<r<1$. For fixed $w_2\in \Gamma_{1/r}$, the function 
\begin{align}
w_1\mapsto\frac{(w_1-w_2)(w_1+w_2)}{G(w_2)G(1/w_1)-G(w_1)G(1/w_2)}
\end{align} has an analytic extension to the annulus $\{w: r\leq |w|\leq 3/r\}$ for all $a$ small enough.
\begin{proof}
We have
\begin{align}
\label{temp23ewdsas}&G(w_1)G(1/w_2)-G(w_2)G(1/w_1)\\&=\frac{w_1}{2cw_2}(1-\sqrt{1+2c/w_1^2})(1-\sqrt{1+2cw_2^2})-\frac{w_2}{2cw_1}(1-\sqrt{1+2c/w_2^2})(1-\sqrt{1+2cw_1^2})\nonumber\\
&=(-\frac{w_1}{w_2}+\frac{w_2}{w_1})\frac{c}{2}+R(w_1,w_2)c^2.\nonumber
\end{align}
Clearly, \eqref{temp23ewdsas} has zeros at $w_1=w_2$, $w_1=-w_2$. Power series expanding in $c$, observe the first coefficient of the series is only zero at $w_1=w_2,-w_2$, so these are the only zeros. One can further expand \eqref{temp23ewdsas} at $w_1=w_2,-w_2$ to check both zeros are order one. Taking the reciprocal and multiplying by $(w_1+w_2)(w_1-w_2)$ we see the two singularities are removable.
\end{proof}
\end{lemma}
We first consider the second integral in \eqref{sumbacktrackB}. Using the expansions in \eqref{tempGexps234} we get
\begin{align}
\frac{G(1/w_1)}{G(1/w_2)}=\frac{w_1}{w_2}+aR_1(w_1,w_2,a)\label{tempGoverGexpans}.
\end{align}
Note that $|w_1/w_2|=r^2<1$, so
\begin{align}
\Big(\frac{|G(w_2)G(1/w_1)|}{|G(w_1)G(1/w_2)|}\Big)^{(-|t'q_n|-n\eps)/2+1}&=(\frac{|w_1^2|}{|w_2^2|}+aR_2(w_1,w_2,a))^{(-|t'q_n|-n\eps)/2+1}\\&\leq \exp({\frac{1}{2}(n\eps+|t'q_n|-2)(\ln(1/r^4)+Ca)})\nonumber
\end{align}
for some $C>0$.
Similarly we have
\begin{align}
\Big(\frac{|G(1/w_2)|}{|G(1/w_1)|}\Big)^{(-|t'q_n|-t'q_n)/2}\Big(\frac{|G(w_1)|}{|G(w_2)|}\Big)^{(-|t'q_n|+t'q_n)/2}\leq e^{-|t'q_n|\ln (1/r^4)+C'a q_n}.\label{temp34rloope}
\end{align} 
Recall \eqref{definitionofa} and \eqref{definitionofpnqn}.  We see
\begin{align}
\Big (\frac{|w_1|}{|w_2|}\Big)^{n/2}=\exp(\frac{n}{2}\ln r^2).
\end{align}
Now observe that
\begin{align}
\exp(\frac{n}{2}\ln r^2+\frac{n\eps}{2}\ln(1/r^4))\leq e^{-C_4n}
\end{align}
since $\eps<1/2$, and that this bound is extremely small compared to the other bounds. Recall Lemma \ref{Veps1eps2formula}, which gives $|V_{1,1}(w_1,w_2)|\leq Ca$. Using similar expansions on the remaining terms in the integrand, from here it easy to see that that second integral in \eqref{sumbacktrackB} is $O(ae^{-C n})$. 

We now focus on the first integral in \eqref{sumbacktrackB}. Using Lemma \ref{doublepolelemma}, we deform the $w_1$ contour from $\Gamma_r$ to $\Gamma_{2/r}$. We pick up a simple pole at $w_1=-w_2$ and a second order pole at $w_1=w_2$. So the first integral in \eqref{sumbacktrackB} becomes the sum of three integrals which are 
\begin{align}
A_n=& \ -\frac{1}{2\pi i}\int_{\Gamma_{1/r}}dw_2\lim_{w_1\rarrow w_2}\frac{d}{dw_1}\frac{V_{1,1}(w_1,w_2)}{-w_1}\Big(\frac{w_1}{w_2}\Big)^{n/2}\Big(\frac{G(w_1)G(1/w_2)}{G(w_2)G(1/w_1)}\Big)^{n/2}\nonumber\\
&\quad\times\frac{G(w_1)G(1/w_2)(w_1-w_2)}{G(w_1)G(1/w_2)-G(w_2)G(1/w_1)}\Big(\frac{G(1/w_2)}{G(1/w_1)}\Big)^{(-|t'q_n|-t'q_n)/2}\Big(\frac{G(w_1)}{G(w_2)}\Big)^{(-|t'q_n|+t'q_n)/2}\nonumber\\
B_n=& \ -\frac{1}{2\pi i}\int_{\Gamma_{1/r}}dw_2\frac{V_{1,1}(-w_2,w_2)}{2w_2^2}(-1)^{|t'q_n|}\nonumber\\
&\quad\times\lim_{w_1\rarrow -w_2}\frac{G(w_1)G(1/w_2)(w_1+w_2)}{G(w_1)G(1/w_2)-G(w_2)G(1/w_1)},\nonumber
\end{align}
and
\begin{align}
R_n=& \ \frac{1}{(2\pi i)^2}\int_{\Gamma_{2/r}}\frac{dw_1}{w_1}\int_{\Gamma_{1/r}}dw_2\frac{V_{1,1}(w_1,w_2)}{w_2-w_1}\Big(\frac{w_1}{w_2}\Big)^{n/2}\Big(\frac{G(w_1)G(1/w_2)}{G(w_2)G(1/w_1)}\Big)^{n/2}\nonumber\\
&\quad\times\frac{G(w_1)G(1/w_2)}{G(w_1)G(1/w_2)-G(w_2)G(1/w_1)}\Big(\frac{G(1/w_2)}{G(1/w_1)}\Big)^{(-|t'q_n|-t'q_n)/2}\Big(\frac{G(w_1)}{G(w_2)}\Big)^{(-|t'q_n|+t'q_n)/2}\nonumber
\end{align}
$A_n$ comes from the double pole, $B_n$ comes from the single pole, and $R_n$ is the remaining integral.

We first bound $R_n$.  Recall $|V_{1,1}(w_1,w_2)|\leq Ca$ and the expansion given by \eqref{tempGoverGexpans}. We have $|w_1|/|w_2|=2$ so
\begin{align}
\Big(\frac{|w_1|}{|w_2|}\Big)^{n/2}=2^{n/2}, &&\Big(\frac{|G(w_1)G(1/w_2)|}{|G(w_2)G(1/w_1)|}\Big)^{n/2}=(\frac{|w_2^2|}{|w_1^2|}+aR_2(1/w_1,1/w_2,a))^{n/2}\leq \frac{1}{3^{n/2}}
\end{align}
and recall \eqref{temp34rloope}.
Putting these bounds together, we see the product of terms with an exponential dependence on $n$ in the integrand of $R_n$ is exponentially decaying in $n$. The remaining terms in the integrand of $R_n$ are bounded by expansions \eqref{tempGoverGexpans} and \eqref{temp23ewdsas}. Hence we have established that $R_n=O(ae^{-Cn})$ for some $C>0$. 

Next we bound $B_n$. This is simple, by Lemma \ref{doublepolelemma}, the expansions \eqref{temp23ewdsas} and \eqref{Gexpans} we have 
\begin{align}
\abs{\lim_{w_1\rarrow -w_2}\frac{G(w_1)G(1/w_2)(w_1+w_2)}{G(w_1)G(1/w_2)-G(w_2)G(1/w_1)}}\leq C.
\end{align}
Recalling $|V_{1,1}|\leq Ca$, we get $B_n = O(a)$.

All that remains is to show that $A_n=O(na)$, since then we will have proven \eqref{temp30f3l}.
Rewrite 
\begin{align}
A_n=\frac{1}{2\pi i}\int_{\Gamma_{1/r}}dw_2 \lim_{w_1\rarrow w_2}\frac{d}{dw_1}\frac{1}{w_1}V_{1,1}(w_1,w_2) f_1(w_1,w_2) f_2(w_1,w_2)
\end{align}
where 
\begin{align}
f_1(w_1,w_2)=\Big(\frac{w_1}{w_2}\frac{G(w_1)G(1/w_2)}{G(w_2)G(1/w_1)}\Big)^{n/2}\Big(\frac{G(1/w_2)}{G(1/w_1)}\Big)^{(-|t'q_n|-t'q_n)/2}\Big(\frac{G(w_1)}{G(w_2)}\Big)^{(-|t'q_n|+t'q_n)/2}\label{temp23bvcbned1}
\end{align}
contains the terms with an exponential dependence on $n$, and
\begin{align}
f_2(w_1,w_2)=\frac{G(w_1)G(1/w_2)(w_1-w_2)}{G(w_1)G(1/w_2)-G(w_2)G(1/w_1)}.\label{temp23bvcbned2}
\end{align}
From Lemma \ref{Veps1eps2formula}, we have $|V_{1,1}|\leq Ca$ and also $|\frac{d}{dw_1}V_{1,1}(w_1,w_2)|\leq Ca$. Recall that in the course of proving these two facts, we showed that $c^{-1}V_{1,1}(w_1,w_2)$ extends to an analytic function for $(a,w_1,w_2)\in B_{\eps}(0)\times (B_R(0)\setminus B_{1/R}(0))^2\subset \C^3$ (for $R>1$ fixed, large, and $\eps>0$ sufficiently small only depending on $R$). A similar argument shows that $(w_1+w_2)f_2(w_1,w_2)$ is analytic on the same set. This gives
\begin{align}
\abs{\lim_{w_1\rarrow w_2}V_{1,1}(w_1,w_2)f_2(w_1,w_2)}\leq Ca\label{tmep3rmhg|}
\end{align} 
and
\begin{align}
\abs{\lim_{w_1\rarrow w_2}\frac{d}{dw_1}\frac{1}{w_1}V_{1,1}(w_1,w_2)f_2(w_1,w_2)}\leq Ca.
\end{align} 
So we have
\begin{align}
A_n=\frac{1}{2\pi i}\int_{\Gamma_{1/r}}\frac{dw_2}{w_2} V_{1,1}(w_2,w_2)\Big(\lim_{w_1\rarrow w_2}\frac{d}{dw_1}f_1(w_1,w_2)\Big)f_2(w_2,w_2)+O(a).
\end{align}
We compute
\begin{align}
\lim_{w_1\rarrow w_2}\frac{d}{dw_1}f_1(w_1,w_2)=&\lim_{w_1\rarrow w_2}\frac{n}{2}\Big(\frac{w_1}{w_2}\frac{G(w_1)G(1/w_2)}{G(w_2)G(1/w_1)}\Big)^{n/2-1}\frac{\frac{d}{dw_1}w_1G(w_1)/G(1/w_1)}{w_2G(w_2)/G(1/w_2)}\label{f1deriv}\\
\times &\Big(\frac{G(1/w_2)}{G(1/w_1)}\Big)^{(-|t'q_n|-t'q_n)/2}\Big(\frac{G(w_1)}{G(w_2)}\Big)^{(-|t'q_n|+t'q_n)/2}\nonumber\\\nonumber+&\frac{t'q_n+|t'q_n|}{2}\Big(\frac{G(1/w_1)}{G(1/w_2)}\Big)^{(t'q_n+|t'q_n|)/2-1}\frac{\frac{d}{dw_1}G(1/w_1)}{G(1/w_2)}\\
\times&\Big(\frac{G(w_1)}{G(w_2)}\Big)^{(-|t'q_n|+t'q_n)/2}\Big(\frac{w_1}{w_2}\frac{G(w_1)G(1/w_2)}{G(w_2)G(1/w_1)}\Big)^{n/2}\nonumber\\
\nonumber+&\frac{-|t'q_n|+t'q_n}{2}\Big(\frac{G(w_1)}{G(w_2)}\Big)^{(-|t'q_n|+t'q_n)/2-1}\frac{\frac{d}{dw_1}G(w_1)}{G(w_2)}\\
\times&\Big(\frac{G(1/w_1)}{G(1/w_2)}\Big)^{(t'q_n+|t'q_n|)/2}\Big(\frac{w_1}{w_2}\frac{G(w_1)G(1/w_2)}{G(w_2)G(1/w_1)}\Big)^{n/2}\nonumber\\
=&  \ \frac{n}{2}\lim_{w_1\rarrow w_2}\frac{\frac{d}{dw_1}w_1G(w_1)/G(1/w_1)}{w_2G(w_2)/G(1/w_2)}+\frac{t'q_n+|t'q_n|}{2}\lim_{w_1\rarrow w_2}\frac{\frac{d}{dw_1}G(1/w_1)}{G(1/w_2)}\nonumber\\
&\quad \quad+\frac{-|t'q_n|+t'q_n}{2}\lim_{w_1\rarrow w_2}\frac{\frac{d}{dw_1}G(w_1)}{G(w_2)}.\nonumber
\end{align}
By defining the value of
\begin{align}
wG(w)/G(1/w)=-\frac{w}{2c}(1-\sqrt{1+2c/w^2})(1+\sqrt{1+2cw^2})
\end{align}
to be $-1/(2w)$ when $a=0$, we see it is has an analytic extension to $(a,w)\in B_{\eps}(0)\times B_R(0)\setminus B_{1/R}(0)\subset \C^2$. Hence it is bounded on this set and its derivative in $w$ is bounded on this set. We get
\begin{align}
\abs{\lim_{w_1\rarrow w_2}\frac{d}{dw_1}f_1(w_1,w_2)}\leq Cn.
\end{align}
Using this bound together with \eqref{tmep3rmhg|} we see that 
\begin{align}
A_n=O(na)
\end{align}
 and the proof is complete.
\end{proof}
\end{proposition}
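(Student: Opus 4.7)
The starting point is the Kasteleyn identity $\P(e \in \omega) = K_{a,1}(b,w)K_{a,1}^{-1}(w,b)$. For a backtracking edge $e=(b,w)$ lying on $\gamma_t'$ one has $K_{a,1}(b,w) = ai$. Using the decomposition $K_{a,1}^{-1} = \mathcal{K}_{1,1}^{-1} - B_{1,1} + B^*_{1,1}$ from Theorem \ref{ThmInvKast}, together with the pointwise bound $|\mathcal{K}_{1,1}^{-1}| \leq Ca$ from Lemma \ref{boundK^-1} and $|B^*_{1,1}| \leq C_1 e^{-C_2 n}$ from Lemma \ref{B*smallmaindiag}, each individual probability reads $-ai\,B_{1,1}(w,b) + O(a^2) + O(e^{-C_2 n})$. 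The union bound adds up at most $O(n)$ such terms along $\gamma_t'$, so the error parts contribute $O(na^2) + O(ne^{-C_2 n})$, which is $o(1)$ since $a \sim n^{-1+\gamma}$ with $\gamma < 1/2$. The proof therefore reduces to establishing
\[
\sum_{\substack{e \in \gamma_t' \\ e\ \text{backtracking}}} -i\,B_{1,1}(w,b) \;=\; O(na).
\]

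To handle this sum I would plug the coordinates of the backtracking edges along $\gamma_t'$ directly into the double-contour formula \eqref{Beps1eps2} and observe that, after the substitution, only one factor in the integrand, of the schematic form $\bigl(G(w_2)G(1/w_1)/G(w_1)G(1/w_2)\bigr)^{i/2}$, depends on the running index $i$ along the line. The summation in $i$ is then a finite geometric progression. Evaluating it produces a new denominator $G(w_1)G(1/w_2) - G(w_2)G(1/w_1)$ in the integrand, plus a ``tail'' term whose base has modulus strictly less than one on the chosen contours and therefore contributes an exponentially small remainder after crude pointwise estimates using the expansions \eqref{tempGexps234} together with $|V_{1,1}| \leq Ca$.

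The main piece has the new denominator vanishing exactly where $G(w_1)G(1/w_2)=G(w_2)G(1/w_1)$. A short auxiliary lemma, proved by expanding in the small parameter $c = a/(1+a^2)$, shows that the only zeros inside a fixed annulus are at $w_1 = \pm w_2$, with the one at $w_1 = w_2$ of order two and the one at $w_1 = -w_2$ simple. I would then deform the $w_1$-contour outward past $\Gamma_{1/r}$ to some $\Gamma_{R'}$ with $R' > 1/r$, picking up a simple residue at $w_1 = -w_2$ and a double-pole residue at $w_1 = w_2$. The simple residue is bounded by $O(a)$ thanks to $|V_{1,1}| \leq Ca$ from Lemma \ref{Veps1eps2formula}, and the integral on the expanded contour is exponentially small in $n$, because the now-dominant factor $(w_1/w_2)^{n/2}$ (with $|w_1|/|w_2|>1$) drives it to zero by the same steepest-descent mechanism used in the proof of Proposition \ref{Btildeairy}.

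The main obstacle, and the only quantitatively delicate step, is the double-pole residue at $w_1 = w_2$. Taking the derivative required by the second-order pole forces a differentiation of the exponential-in-$n$ factor in $w_1$, which brings down a factor of order $n$; a priori this residue could therefore be of size $n$. The rescue comes from Lemma \ref{Veps1eps2formula}, which yields not just $|V_{1,1}(w_1,w_2)| \leq Ca$ but also boundedness of $\partial_{w_1}V_{1,1}$ and analyticity of $(w_1+w_2)G(w_1)G(1/w_2)/\bigl[G(w_1)G(1/w_2) - G(w_2)G(1/w_1)\bigr]$ near the diagonal. These inputs guarantee that the coefficient multiplying the $n$-term in the residue carries an honest factor of $a$, producing an $O(na)$ bound on the double-pole contribution. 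Adding this to the $O(a)$ simple-pole contribution and the exponentially small remainder gives $\sum -iB_{1,1} = O(na)$, and hence $\sum \P(e \in \omega) = O(na^2) + o(1) \to 0$, which completes the proof.
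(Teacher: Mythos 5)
Your proposal follows the paper's proof essentially step for step: the same reduction to bounding $\sum -iB_{1,1}$ via Lemmas \ref{boundK^-1} and \ref{B*smallmaindiag}, the geometric summation in the running index, the residue calculation after deforming $w_1$ past $\Gamma_{1/r}$, the same splitting into the simple-pole, double-pole, and remainder contributions, and the same use of the boundedness of $\partial_{w_1}V_{1,1}$ and the analyticity of $(w_1+w_2)f_2$ from Lemma \ref{Veps1eps2formula} to tame the factor of $n$ at the double pole. One small slip in wording: the zero of $G(w_1)G(1/w_2)-G(w_2)G(1/w_1)$ at $w_1=w_2$ is \emph{simple} (Lemma \ref{doublepolelemma}); the second-order pole in the integrand arises because this simple zero sits on top of the pre-existing $1/(w_2-w_1)$ factor, not because the denominator itself vanishes to second order.
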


\begin{proof}[Proof Proposition \ref{backtrackpropmain}]
Extend $\gamma_t'$ to the boundary of the lower left quadrant. Fix $t\in \R$, we have
\begin{align}
\P(\{\exists e\in \gamma_t' \text{ which is backtracking} \})\leq \sum_{\substack{e\in \gamma_t';\\e \text{  backtracking}}}\P(e\in \omega)\rarrow 0
\end{align}
 which follows from the proof above and Lemma \ref{B*smallmaindiag}.
Via an analogous argument in the other quadrants, one can get that the probability of seeing a backtracking dimer along the paths \eqref{antidiagpaths} goes to zero.
\end{proof}
\newpage
\section{The $a$ height function along $\partial S^T$}
\label{aheightsection}
In this section we establish asymptotics of the $a$-height function along the top line $\partial S^T$ of the box $S$ in order to help gain control over the whereabouts of the path $\Gamma_{m-1,m}$ for large $n$. For a fixed $t$ define the curve $\gamma_t$ which is the straight line parallel to $\vec{e}_1$, starting at a boundary face in the lower left quadrant and which ends at the face with position $(n(1+\xi_c)+1+\alpha p_n)\vec{e}_1+t'q_n\vec{e}_2$ on $\partial S^T$, where $t'=(2\floor{\frac{tq_n-1}{2}}+2)/q_n$. Label the height at the face $(n(1+\xi_c)+1+\alpha p_n)\vec{e}_1+t'q_n\vec{e}_2$ by $h(t')$, it is given by
\begin{align}
h(t')-(2|t'q_n|+1)=\sum_{e\in \gamma_{t'}}\sigma_e(4\ind_{e\in \omega}-1)
\end{align}
where $2|t'q_n|+1$ is the value of the height function at the end of $\gamma_t$ on the boundary and we defined the height as 1 at the face at $(0,0)$. The sum is over $a$-edges crossed by $\gamma_{t'}$ and $\sigma_e$ is $-1$ if the edge $e$ connects vertices of type $W_1,B_1$ (i.e. it is a backtracking edge in the lower left quadrant) and $1$ if $e$ connects vertices of type $W_0, B_0$.
\begin{proposition}\label{ExpectationProp}
For any fixed real $t$, 
\begin{align}
\E_{Az}[h(t')]= n+o(1)
\end{align}
as $n\rarrow \infty$.
\begin{proof}
Call the $a$-edges (or dimers) in the lower left quadrant that connect vertices of type $W_0,B_0$ forward edges (or dimers). We have
\begin{align}
\E_{Az}[h(t')-(2|t'q_n|+1)]&=\sum_{\substack{e\in \gamma_{t'};\\ e \text{ forward}}}(4\P(e\in \omega)-1)-\sum_{\substack{e\in \gamma_{t'};\\ e \text{ backtracking}}}(4\P(e\in \omega)-1)\\
&=4\sum_{\substack{e\in \gamma_{t'};\\ e \text{ forward}}}\P(e\in \omega)-1+o(1)\label{sumtemp12}
\end{align}
where in the second line we used the fact that the path $\gamma_{t'}$ crosses precisely one more forward edge than backtracking edges, and that the sum over backtracking edges along such a path goes to zero by Proposition \ref{backtrackingforheightfn}. From \eqref{corrfuncs} and Theorem \ref{ThmInvKast} the one point correlation function is given by
\begin{align}
\P(e\in \omega)=K_{a,1}(b,w)(\K_{1,1}^{-1}(w,b)-B_{\eps_1,\eps_2}(w,b)+B_{\eps_1,\eps_2}^*(w,b)).
\end{align}
Since $K_{a,1}(b,w)=ai$ for $e\in \gamma_{t'}$, $|\K_{1,1}^{-1}(w,b)|\leq Ca$ by Lemma \ref{boundK^-1} and $|B^*_{\eps_1,\eps_2}(w,b)|\leq C_1e^{-C_2n}$ by Lemma \ref{B*smallmaindiag}, we have that
\begin{align}
\P(e\in \omega)=-aiB_{\eps_1,\eps_2}(w,b)+O(a^2)+O(ae^{-c_1n}).
\end{align}
We index the forward edges $e\in \gamma_{t'}$ as follows; they have vertices $(y,x)$ in $B_0\times W_0$, such that
\begin{align}
x=(x_1,x_2)=(i-t'q_n+1,i+t'q_n)\\
y=(y_1,y_2)=(i-t'q_n,i+t'q_n+1)
\end{align}
where $2\Z \ni i\leq n(1+\xi_c)+\alpha p_n$ and $i\geq |t'q_n|$. The sum appearing in \eqref{sumtemp12} can be written
\begin{align}
\sum_{\substack{e\in \gamma_{t'};\\ e \text{ forward}}}\P(e\in \omega)=-ai\sum_{\substack{|t'q_n|\leq i\in 2\Z;\\i \leq n(1+\xi_c)+\alpha p_n}}B_{0,0}(a,x_1,x_2,y_1,y_2)+o(1)\label{temp133}
\end{align}
We substitute $x,y$ into \eqref{Beps1eps2} making use of  \eqref{Hfunc}, this yields \eqref{temp133} as
\begin{align}
-ai\sum_{\substack{|t'q_n|\leq i\in 2\Z;\\i \leq n(1+\xi_c)+\alpha p_n}}\frac{i^{-1}}{(2\pi i)^2}\int_{\Gamma_r}\frac{dw_1}{w_1}\int_{\Gamma_{1/r}}dw_2\frac{V_{0,0}(w_1,w_2)}{w_2-w_1}\Big(\frac{w_1}{w_2}\Big)^{n/2}\label{temp324}\\
\times\Big(\frac{G(w_1)G(1/w_1)}{G(w_2)G(1/w_2)}\Big)^{t'q_n/2}\Big(\frac{G(w_1)G(1/w_2)}{G(w_2)G(1/w_1)}\Big)^{(n-i)/2}\frac{1}{G(w_1)G(1/w_2)}.\nonumber
\end{align}
Change variables $i'=i-|t'q_n|$
\begin{align}
&-a\sum_{\substack{0\leq i'\in \Z;\\i' \leq (n(1+\xi_c)+\alpha p_n-|t'q_n|)/2}}\frac{1}{(2\pi i)^2}\int_{\Gamma_r}\frac{dw_1}{w_1}\int_{\Gamma_{1/r}}dw_2\frac{V_{0,0}(w_1,w_2)}{w_2-w_1}\Big(\frac{w_1}{w_2}\Big)^{n/2}\frac{1}{G(w_1)G(1/w_2)}\label{temp540}\\
&\quad\times\Big(\frac{G(w_1)G(1/w_2)}{G(w_2)G(1/w_1)}\Big)^{n/2}\Big(\frac{G(w_2)G(1/w_1)}{G(w_1)G(1/w_2)}\Big)^{i}\Big(\frac{G(1/w_2)}{G(1/w_1)}\Big)^{(-|t'q_n|-t'q_n)/2}\Big(\frac{G(w_1)}{G(w_2)}\Big)^{(-|t'q_n|+t'q_n)/2}\nonumber\\
 & \ =\frac{a}{(2\pi i)^2}\int_{\Gamma_r}\frac{dw_1}{w_1}\int_{\Gamma_{1/r}}dw_2\frac{V_{0,0}(w_1,w_2)}{w_2-w_1}\Big(\frac{w_1}{w_2}\Big)^{n/2}\Big(\frac{G(w_1)G(1/w_2)}{G(w_2)G(1/w_1)}\Big)^{n/2}\nonumber\\
&\quad\times\Big(1-\Big(\frac{G(w_2)G(1/w_1)}{G(w_1)G(1/w_2)}\Big)^{(n(1+\xi_c)+\alpha p_n-|t'q_n|)/2+1}\Big)\nonumber\\&\quad\times\frac{1}{G(w_2)G(1/w_1)-G(w_1)G(1/w_2)}\Big(\frac{G(1/w_2)}{G(1/w_1)}\Big)^{(-|t'q_n|-t'q_n)/2}\Big(\frac{G(w_1)}{G(w_2)}\Big)^{(-|t'q_n|+t'q_n)/2}\nonumber\\
& \ =\frac{a}{(2\pi i)^2}\int_{\Gamma_r}\frac{dw_1}{w_1}\int_{\Gamma_{1/r}}dw_2\frac{V_{0,0}(w_1,w_2)}{w_2-w_1}\Big(\frac{w_1}{w_2}\Big)^{n/2}\Big(\frac{G(w_1)G(1/w_2)}{G(w_2)G(1/w_1)}\Big)^{n/2}\nonumber\\
&\quad\times\frac{1}{G(w_2)G(1/w_1)-G(w_1)G(1/w_2)}\Big(\frac{G(1/w_2)}{G(1/w_1)}\Big)^{(-|t'q_n|-t'q_n)/2}\Big(\frac{G(w_1)}{G(w_2)}\Big)^{(-|t'q_n|+t'q_n)/2}\nonumber\\
& \quad - \frac{a}{(2\pi i)^2}\int_{\Gamma_r}\frac{dw_1}{w_1}\int_{\Gamma_{1/r}}dw_2\frac{V_{0,0}(w_1,w_2)}{w_2-w_1}\Big(\frac{w_1}{w_2}\Big)^{n/2}\Big(\frac{G(w_2)G(1/w_1)}{G(w_1)G(1/w_2)}\Big)^{(n\xi_c+\alpha p_n-|t'q_n|)/2+1}\nonumber\\&\quad\times\frac{1}{G(w_2)G(1/w_1)-G(w_1)G(1/w_2)}\Big(\frac{G(1/w_2)}{G(1/w_1)}\Big)^{(-|t'q_n|-t'q_n)/2}\Big(\frac{G(w_1)}{G(w_2)}\Big)^{(-|t'q_n|+t'q_n)/2}\nonumber
\end{align}
where we used the geometric sum formula in the first equality. Denote the last double integral by $X_n$, we will bound this at the end of the proof. We focus on the first double integral instead (which appears in the last line of \eqref{temp540}).
Lemma \ref{doublepolelemma} allows us to deform the $w_1$-contour from $\Gamma_{r}$ to $\Gamma_{2/r}$. Just as in Proposition \ref{backtrackingforheightfn}, we cross over two poles, a single pole at $w_1=-w_2$ and a double pole at the point $w_1=w_2$. The residue theorem then gives this double integral as the sum of
\begin{align}
&A_n'= -\frac{a}{2\pi i}\int_{\Gamma_{1/r}}dw_2\lim_{w_1\rarrow w_2}\frac{d}{dw_1}\frac{V_{0,0}(w_1,w_2)}{-w_1}\Big(\frac{w_1}{w_2}\Big)^{n/2}\Big(\frac{G(w_1)G(1/w_2)}{G(w_2)G(1/w_1)}\Big)^{n/2}\label{temp34ew}\\
&\quad\times\frac{w_1-w_2}{G(w_2)G(1/w_1)-G(w_1)G(1/w_2)}\Big(\frac{G(1/w_2)}{G(1/w_1)}\Big)^{(-|t'q_n|-t'q_n)/2}\Big(\frac{G(w_1)}{G(w_2)}\Big)^{(-|t'q_n|+t'q_n)/2}\nonumber\\
&B_n'=-\frac{a}{2\pi i}\int_{\Gamma_{1/r}}dw_2\frac{V_{0,0}(-w_2,w_2)}{2w_2^2}(-1)^{-|t'q_n|}\lim_{w_1\rarrow -w_2}\frac{w_1+w_2}{G(w_2)G(1/w_1)-G(w_1)G(1/w_2)}\nonumber\\
&R_n'=\frac{a}{(2\pi i)^2}\int_{\Gamma_{2/r}}\frac{dw_1}{w_1}\int_{\Gamma_{1/r}}dw_2\frac{V_{0,0}(w_1,w_2)}{w_2-w_1}\Big(\frac{w_1}{w_2}\Big)^{n/2}\Big(\frac{G(w_1)G(1/w_2)}{G(w_2)G(1/w_1)}\Big)^{n/2}\nonumber\\
&\quad\times\frac{1}{G(w_2)G(1/w_1)-G(w_1)G(1/w_2)}\Big(\frac{G(1/w_2)}{G(1/w_1)}\Big)^{(-|t'q_n|-t'q_n)/2}\Big(\frac{G(w_1)}{G(w_2)}\Big)^{(-|t'q_n|+t'q_n)/2}.\nonumber
\end{align}
Hence we have 
\begin{align}
\sum_{\substack{e\in \gamma_{t'};\\ e \text{ forward}}}\P(e\in \omega)=A_n'+B_n'+R_n'+X_n+o(1).\label{tempsummary5ref}
\end{align}
Observe that via a similar argument that we used to get $R_n=O(e^{-Cn})$ in the proof of Proposition \ref{backtrackingforheightfn}, we have that $R_n'=O(e^{-Cn})$. The first and second integral in \eqref{temp34ew} (which came from the double pole at $w_1=w_2$ and simple pole at $w_1=-w_2$) contains the main contribution to the asymptotics of the sum \eqref{temp540}. The following arguments contain expansions of functions of a similar type to expansions already computed in this article. We compute that asymptotics of $B_n'$ first.
From Lemma \ref{Veps1eps2formula}, we have 
\begin{align}
V_{0,0}(-w_2,w_2)=\frac{1}{2}+aR_1(w_2,a).
\end{align}
Using \eqref{temp23ewdsas} we compute
\begin{align}
\lim_{w_1\rarrow -w_2}\frac{w_1+w_2}{G(w_2)G(1/w_1)-G(w_1)G(1/w_2)}=\frac{w_2}{a}+R_2(w_2,a).
\end{align}
We see 
\begin{align}
B_n'=-\frac{1}{2\pi i}\int_{\Gamma_{1/r}}\frac{1}{4w_2}+O(a)=-\frac{1}{4}+O(a).\label{Bn'asymp}
\end{align}
Now we turn to $A_n'$, we rewrite it as
\begin{align}
-\frac{a}{2\pi i}\int_{\Gamma_{1/r}}dw_2 \lim_{w_1\rarrow w_2}\frac{d}{dw_1}\frac{V_{0,0}(w_1,w_2)}{w_1}f_1(w_1,w_2)\tilde{f}_2(w_1,w_2)\label{tempprodr}
\end{align}
where we recall $f_1,f_2$ as given in \eqref{temp23bvcbned1}, \eqref{temp23bvcbned2} and
\begin{align}
\tilde{f}_2(w_1,w_2)=\frac{f_2(w_1,w_2)}{G(w_1)G(1/w_2)}.
\end{align}
Note that a minus sign in the integrand in $A_n'$ was absorbed by $f_2(w_1,w_2)$.
We use the product rule in the integrand of \eqref{tempprodr}. Since $f_1(w_2,w_2)=1$, we get
\begin{align}
\label{tempAn''}A_n'=-\frac{a}{2\pi i}\int_{\Gamma_{1/r}}\frac{dw_2}{w_2} V_{0,0}(w_2,w_2)\Big(\lim_{w_1\rarrow w_2}\frac{d}{dw_1}f_1(w_1,w_2)\Big)\tilde{f}_2(w_2,w_2)\\-\frac{a}{2\pi i}\int_{\Gamma_{1/r}}dw_2 \lim_{w_1\rarrow w_2}\frac{d}{dw_1}\frac{V_{0,0}(w_1,w_2)}{w_1}\tilde{f}_2(w_1,w_2).\nonumber
\end{align}
From Lemma \ref{Veps1eps2formula},
\begin{align}\label{Vscsexpans}
V_{0,0}(w_2,w_2)=-\frac{1}{2}+a\frac{(2w_2^2)(1+w_2^4)}{4w_2^4}+a^2R(w_2,w_2,a),
\end{align}
and 
\begin{align}
\abs{\frac{d}{dw_1}\frac{V_{0,0}(w_1,w_2)}{w_1}}\leq Ca.
\end{align}
Using \eqref{temp23ewdsas} one can show
\begin{align}\label{f2expans}
\tilde{f}_2(w_1,w_2)=-2\frac{w_1w_2}{(w_1+w_2)a}-\frac{(w_1^2+w_2^2)(1+w_1^2w_2^2)}{w_1w_2(w_1+w_2)}+a R_3(w_1,w_2,a)
\end{align}
and so
\begin{align}
\lim_{w_1\rarrow w_2}\frac{d}{dw_1}\tilde{f}_2(w_1,w_2)=\frac{1}{2a}+R_1(w_2,a).
\end{align}
We can also use this to get
\begin{align}
\abs{\tilde{f}_2(w_1,w_2)\frac{d}{dw_1}\frac{V_{0,0}(w_1,w_2)}{w_1}}\leq Ca.
\end{align}
Hence the second term in \eqref{tempAn''} is
\begin{align}
&-\frac{a}{2\pi i}\int_{\Gamma_{1/r}}dw_2 \frac{V_{0,0}(w_2,w_2)}{w_2}\lim_{w_1\rarrow w_2}\frac{d}{dw_1}\tilde{f}_2(w_1,w_2) +O(a)\\
\nonumber & \ = \frac{1}{4}+O(a).
\end{align}
Recall we computed the derivative of $f_1$ in \eqref{f1deriv}. We expand the functions appearing in the derivative of $f_1$ up to second order in $a$,
\begin{align}
\lim_{w_1\rarrow w_2}\frac{\frac{d}{dw_1}w_1G(w_1)/G(1/w_1)}{w_2G(w_2)/G(1/w_2)}&=\lim_{w_1\rarrow w_2} -\frac{w_2}{w_1^2}+a\frac{-3w_1^2+w_2^2}{2w_2}+a^2R(w_1,w_2,a)\\
&=-\frac{1}{w_2}+a\frac{1+w_2^4}{w_2^3}+a^2R(w_2,w_2,a),\nonumber\\
\lim_{w_1\rarrow w_2}\frac{\frac{d}{dw_1}G(1/w_1)}{G(1/w_2)}&=\lim_{w_1\rarrow w_2}\frac{1}{w_2}+a\frac{-3w_1^2+w_2^2}{2w_2}+a^2R_4(w_1,w_2,a)\nonumber\\
\nonumber &= \frac{1}{w_2}-aw_2+a^2R_4(w_2,w_2,a),\\
\lim_{w_1\rarrow w_2}\frac{\frac{d}{dw_1}G(w_1)}{G(w_2)}&=\lim_{w_1\rarrow w_2} -\frac{w_2}{w_1^2}-a\frac{w_1^2-3w_2^2}{2w_1^4 w_2}+a^2R_5(w_1,w_2,a)\nonumber\\
&=-\frac{1}{w_2}+a\frac{1}{w_2^3}+a^2R_5(w_1,w_2,a).\nonumber
\end{align}
Substituting these expansions, we have
\begin{align}
\frac{d}{dw_1}f_1(w_1,w_2)&=\frac{n}{2}\Big(-\frac{1}{w_2}+\Big(\frac{1}{w_2^3}+w_2\Big)a\Big)+\frac{t'q_n+|t'q_n|}{2}\Big(\frac{1}{w_2}-w_2a\Big)\label{f1expans}\\
&+\frac{t'q_n-|t'q_n|}{2}\Big(-\frac{1}{w_2}+\frac{a}{w_2^3}\Big)+na^2R_5(w_2,a).\nonumber
\end{align}
Substitute the expansions \eqref{f1expans}, \eqref{f2expans} and \eqref{Vscsexpans} into the first integral of $A_n'$ in \eqref{tempAn''}. Expanding brackets we get
\begin{align}
A_n'&=-\frac{a}{2\pi i}\int_{\Gamma_{1/r}}dw_2\Big(\frac{-n+2|t'q_n|}{4w_2 a}+\frac{n(1+w_2^4)+t'q_n+|t'q_n|+w_2^4(-t'q_n+|t'q_n|)}{4w_2^3}\Big)\\&\quad\quad+\frac{1}{4}+O(na^2)+O(a)\nonumber\\
&=\frac{n-2|t'q_n|}{4} +\frac{1}{4}+O(na^2)+O(a).\nonumber
\end{align}
Hence with this result, \eqref{Bn'asymp}, \eqref{tempsummary5ref} and \eqref{sumtemp12} we have established
\begin{align}
\E_{Az}[h(t')]&=4\sum_{\substack{e\in \gamma_{t'};\\ e \text{ forward}}}\P(e\in \omega)-1+2|t'q_n|+1+o(1)\\
&=4(A_n'+B_n'+R_n'+X_n+o(1))+2|t'q_n|+o(1)\nonumber\\
&=4\Big(\frac{n-2|t'q_n|}{4} +\frac{1}{4}-\frac{1}{4}+(na^2)+O(a)+O(e^{-Cn})+X_n+o(1)\Big)+2|t'q_n|\nonumber\\
&=n+4X_n+O(e^{-Cn})+o(1)+O(na^2)+O(a)\nonumber\\
&=n+4X_n+o(1).\nonumber
\end{align}
It remains to show that $X_n=o(1)$. We rewrite $X_n$ as
\begin{align}
-\frac{a}{(2\pi i)^2}\int_{\Gamma_r}dw_1\int_{\Gamma_{1/r}}dw_2\frac{1}{(w_2-w_1)^2}\frac{V_{0,0}(w_1,w_2)}{w_1}\frac{H_{x_1'+1,x_2'}(w_1)}{H_{y_1',y_2'+1}(w_2)}\\\times\frac{(w_2-w_1)G(w_1^{-1})G(w_2)}{G(w_2)G(1/w_1)-G(w_1)G(1/w_2)}\label{Xnasymp}\nonumber
\end{align}
where $(x_1',x_2')=\tilde{x_j}$, $(y_1',y_2')=\tilde{y}_i$ are as in assumption \ref{assumptionairynotationcoords} with $\alpha_j=\alpha_i=\alpha $, $\beta_j=\beta_i=t'$, $\eps_1=\eps_2=0$. Note that
\begin{align}
\frac{(w_2-w_1)(w_1+w_2)G(w_1^{-1})G(w_2)}{G(w_2)G(1/w_1)-G(w_1)G(1/w_2)}
\end{align}
is analytic on $r\leq |w_1|,|w_2|\leq 3/r$, by applying lemma \ref{doublepolelemma} in the $w_1$, $w_2$ variables separately, it is also bounded for $a>0$ small.
Observe the structural similarity between $X_n$ and $B_{\eps_1,\eps_2}(\tilde{x}_j,\tilde{y}_i)$ in \eqref{Beps1eps2}, in particular observe that $X_n$ has a double pole coming from $(w_1-w_2)^2$, and single pole $(w_1+w_2)$ instead of just a single pole coming from $(w_1-w_2)$ in $B_{\eps_1,\eps_2}(\tilde{x}_j,\tilde{y}_i)$. Hence we expect a bound which exponentially decays in $\alpha $, since the Airy kernel exponentially decays for large, positive values. We focus on the $X_n$ at a neighbourhood of $(w_1,w_2)=(i,i)$, since by similar arguments to the proof of Proposition \ref{Btildeairy}, the rest of the contour has a lower order contribution. We recall the definition of the contours $desc_{1/R}^\eps$ and $asc_R^\eps$. These are straight lines in $B_\eps(i)$. The two straight lines $asc_R^\eps\cap B_\eps(i)$ stem from the two points $w_2^*=i+\eps e^{i\theta}$ down to $i$, hence they intersect the smaller circle $B_{\eps/(an)^{1/3}}(i)$ at two points $w_2^{**}$. Define a new contour $asc_{R}^{\eps,n}$ which is $asc_R^{\eps,n}$ outside of $B_{\eps/(an)^{1/3}}(i)$ and consists of the smaller section of $\partial B_{\eps/(an)^{1/3}}(i)$ between the points $w_2^{**}$. Define the contour $desc_{1/R}^{\eps,n}$ in an equivalent fashion. Deform the $w_1,w_2$ contours in $X_n$ to $desc_{1/R}^{\eps,n}$, $asc_{R}^{\eps,n}$ (and their reflections). Observe that now $|w_1-w_2|\geq C/(an)^{1/3}$. We set $w_1'=i+w_1(an)^{-1/3}, w_2'=i+w_2(an)^{-1/3}$ and use Lemmas \ref{labelH(i)simple}, \ref{saddlepointexpprop1} to get an upper bound on the section of the integral in $X_n$ over $desc_{1/R}^{\eps,n}\cap B_{\eps}(i)\times asc_{R}^{\eps,n}\cap B_{\eps}(i)$ given by 
\begin{align}
&\frac{C'}{4\pi^2}\int_{C_1'}|dw_1|\int_{C_2'}|dw_2| \exp\big(\mcR[-iw_1\alpha +iw_2\alpha -w_1^2t'+w_2^2t'-\frac{i}{3}w_1^3+\frac{i}{3}w_2^3]\\&\quad+C\frac{(|w_1|+|w_2|+|w_1|^4+|w_2|^4) }{(an)^{1/3}}+C''a^2n\big)\nonumber
\end{align}
where $C_1'$ and $C_2'$ are the images of $desc_{1/R}^{\eps,n}\cap B_\eps(i)$ and $asc_R^{\eps,n}\cap B_\eps(i)$ under the map $w\mapsto (an)^{1/3}(w-i)$, respectively. The sections of $C_1'$ and $C_2'$ that consist of straight lines heading out to infinity can be parametrised in the form $w_1=re^{i\theta}$ for $r\in [\eps,\eps (an)^{1/3}]$, $\theta$ fixed in $[-\pi/6-\delta,-\pi/6+\delta]\cup [-5\pi/6-\delta,-5\pi/6+\delta]$ and $w_2=se^{i\varphi}$ for $s\in [\eps,\eps (an)^{1/3}]$, $\varphi$ fixed in $[\pi/6-\delta,\pi/6+\delta]\cup [5\pi/6-\delta,5\pi/6+\delta]$ where $\delta=\pi/18$. As in \eqref{tempmnnmnm}, we use the inequalities \eqref{epssmallcond}, \eqref{deltasmallcond} to see that these sections are bounded above by integrals of the form
\begin{align}
\int_\eps^\infty dr \int_\eps^\infty ds\exp\big(r(\alpha \sin\theta+\frac{C}{(an)^{1/3}})-s(\alpha \sin\varphi-\frac{C}{(an)^{1/3}})\\-r^2\cos(2\theta)t'+s^2\cos(2\varphi)t'-\frac{r^3}{12}-\frac{s^3}{12}\big).\nonumber
\end{align}
Because $r,s>\eps$ and $\sin\theta<\sin(-\pi/6+\pi/18)<0$ and $\sin\varphi>\sin(\pi/6-\pi/18)>0$ in the above integral is $O(e^{-\sin(\pi/6-\pi/18)\eps\alpha })$. The sections of $C_1'$ and $C_2'$ over $\partial B_\eps(0)$ (the image of $\partial B_{\eps/(an)^{1/3}}(i)$) can be parametrised in the form $w_1=re^{i\theta}$ for $r=\eps$, $\theta$ in a subset of $[-5\pi/6-\delta,-\pi/6+\delta]$ and $w_2=se^{i\varphi}$ for $s=\eps$, $\varphi$ in a subset of $[\pi/6,5\pi/6+\delta]$. Simply observe that again, $\sin\theta<\sin(-\pi/6+\pi/18)<0$ and $\sin\varphi>\sin(\pi/6-\pi/18)>0$ to achieve the same bound. We conclude that 
\begin{align}
|X_n|\leq C\exp(-\sin(\pi/6-\pi/18)\eps\alpha )=C\exp(-c_1\alpha )\rarrow 0.\label{temp607oy}
\end{align}
This concludes the proof.
\end{proof}
\end{proposition}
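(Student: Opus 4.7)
The plan is to start by decomposing the expected height as a sum over $a$-edges crossed by $\gamma_{t'}$. Write
\[
\E_{Az}[h(t')] - (2|t'q_n|+1) = \sum_{e \text{ forward}}(4\P(e\in\omega)-1) - \sum_{e \text{ backtracking}}(4\P(e\in\omega)-1).
\]
Since $\gamma_{t'}$ alternates between forward and backtracking $a$-edges and is capped at both ends by forward edges (one at the boundary and one at the face on $\partial S^T$), there is exactly one more forward edge than backtracking edge. Combined with Proposition \ref{backtrackingforheightfn}, which kills the backtracking sum, this reduces the task to showing $4\sum_{e \text{ forward}}\P(e\in\omega) = n - 2|t'q_n| + o(1)$.

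Next I would expand the one-point probability using Theorem \ref{ThmInvKast}. For forward edges on $\gamma_{t'}$ one has $K_{a,1}(b,w)=ai$, while Lemmas \ref{boundK^-1} and \ref{B*smallmaindiag} give $|\mathcal{K}_{1,1}^{-1}|\leq Ca$ and $|B^*|\leq C_1 e^{-C_2n}$. Since there are $O(n)$ forward edges along $\gamma_{t'}$ and $na^2\to 0$, these contributions fold into $o(1)$, leaving only $-ai\sum B_{0,0}$ to analyse. Substituting the double contour integral \eqref{Beps1eps2} for $B_{0,0}$ and evaluating the geometric sum in $i$ produces two double integrals: the ``main'' one (independent of the upper summation limit) plus an error term $X_n$ carrying the dependence on the endpoint $n(1+\xi_c)+\alpha p_n$ through the factor $\bigl(\tfrac{G(w_2)G(1/w_1)}{G(w_1)G(1/w_2)}\bigr)^{(n(1+\xi_c)+\alpha p_n-|t'q_n|)/2+1}$.

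For the main integral, Lemma \ref{doublepolelemma} shows that after cancelling the denominator $G(w_1)G(1/w_2)-G(w_2)G(1/w_1)$ against $(w_1-w_2)(w_1+w_2)$ the integrand is analytic in an annulus, so one may deform the $w_1$-contour $\Gamma_r$ out to $\Gamma_{2/r}$, picking up a double pole at $w_1=w_2$ (giving $A_n'$) and a simple pole at $w_1=-w_2$ (giving $B_n'$), plus a remainder $R_n'$. The residue $B_n'$ is computed directly via the expansions in Lemma \ref{Veps1eps2formula} and \eqref{Gexpans}, and comes out to $-1/4+O(a)$. For $A_n'$ the product rule and the expansions of $V_{0,0}(w_2,w_2)$, $\tilde f_2(w_1,w_2)$ and $\lim_{w_1\to w_2}\tfrac{d}{dw_1}f_1$ (where the logarithmic derivatives of $w_1G(w_1)/G(1/w_1)$, $G(1/w_1)$ and $G(w_1)$ each contribute a $1/w_2$ term at leading order) yield $A_n'=(n-2|t'q_n|)/4+1/4+O(na^2)+O(a)$. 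The bound $|R_n'|=O(e^{-Cn})$ follows from the fact that on $\Gamma_{2/r}\times\Gamma_{1/r}$ we have $|w_1/w_2|=2$ while $|G(w_1)G(1/w_2)/(G(w_2)G(1/w_1))|\le 1/3+O(a)$ uniformly, so the $n/2$-th power decays geometrically in $n$.

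The main obstacle will be the auxiliary term $X_n$, which must be shown to be $o(1)$ uniformly in $\alpha=\beta\log n$. This is where saddle-point analysis at $w_1=w_2=i$ enters. I would deform the contours to $desc_{1/R}^\eps$ and $asc_R^\eps$ as in Proposition \ref{Btildeairy}, with the extra wrinkle that the double pole at $w_1=w_2$ forces me to cut out a small ball of radius $\eps/(an)^{1/3}$ around $i$ (working with modified contours $desc_{1/R}^{\eps,n}$, $asc_R^{\eps,n}$) to keep $|w_1-w_2|$ bounded below. Rescaling $w_k'=i+w_k(an)^{-1/3}$ and using Lemma \ref{saddlepointexpprop1}, the integrand becomes the exponential of the cubic Airy exponent plus linear term $\mathcal{R}[-iw_1\alpha+iw_2\alpha]=\alpha(r\sin\theta-s\sin\varphi)$, which on the descent/ascent directions satisfies $\sin\theta\le -\sin(\pi/6-\pi/18)<0$ and $\sin\varphi\ge \sin(\pi/6-\pi/18)>0$. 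Since the rescaled contours stay outside $B_\eps(0)$ (because of the small-ball excision), the linear term gives exponential decay at rate $\exp(-c_1\alpha)=\exp(-c_1\beta\log n)$, producing $|X_n|=o(1)$. This is precisely where the choice $\alpha=\beta\log n$ pays off. Collecting all pieces gives $\E_{Az}[h(t')]=2|t'q_n|+4A_n'+4B_n'+o(1)=n+o(1)$.
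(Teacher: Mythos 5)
Your proposal follows essentially the same route as the paper's proof: the same decomposition into forward and backtracking edges, the same reduction to $-ai\sum B_{0,0}$ via the bounds on $\K_{1,1}^{-1}$ and $B^*$, the same geometric summation producing a main integral plus the endpoint term $X_n$, the same contour deformation of $\Gamma_r$ to $\Gamma_{2/r}$ yielding $A_n'$, $B_n'$, $R_n'$ with identical evaluations, and the same saddle-point treatment of $X_n$ with the small-ball excision around $i$ and exponential decay in $\alpha=\beta\log n$. The argument is correct as outlined.
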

Now we look to show that the variance of the height function along the top boundary $\partial S^T$ of $S$ tends to zero. 
\begin{proposition}\label{vargoestozero}
For any fixed real $t$, 
\begin{align}
\var_{Az}[h(t')]\rarrow 0
\end{align}
as $n\rarrow \infty$.
\begin{proof}
Recall that the path $\gamma_t$ starts at the lower left boundary and ends on $\partial S^T$. We extend $\gamma_t$ by a line segment $x\vec{e}_1, x\in[0,1]$, so that the path ends at $\partial S^T+\vec{e}_1$ and call this extension $\gamma_t^*$. Observe that this extension crosses an equal number of forward and backtracking dimers, this helps with the coming expressions. Define the height at the end of this extended path 
\begin{align}
h^*(t')&=\sum_{e\in \gamma_{t'}^*}\sigma_e(4\ind_{e\in \omega}-1)+(2|t'q_n|+1)\\
&=h(t')-4(\ind_e^*-1)\nonumber
\end{align}
where $e^*$ is the backtracking edge crossed by the short line segment $\gamma_t^*\setminus \gamma_t$.
We prove that $\var_{Az}(h^*(t'))\rarrow 0$, from which a similar argument to the following then shows that $\var_{Az}(h(t'))\rarrow 0$. We do this as it makes some expressions neater. Take two copies $\gamma_t^*(1),$ $\gamma_t^*(2)$ of $\gamma_t^*$ and introduce the notation $e=(x_e,y_e)$. Then
\begin{align}
\var_{Az}(h^*(t'))&=16\sum_{e\in \gamma_t^*(1)}\sum_{e'\in \gamma_t^*(2)}\sigma_e\sigma_{e'}(\P(e,e'\in\omega)-\P(e\in \omega)\P(e'\in\omega))\\
&=16\sum_{e\in \gamma_t^*(1)}\sum_{e'\in \gamma_t^*(2)}\sigma_e\sigma_{e'}K_{a,1}(e)K_{a,1}(e')K_{a,1}^{-1}(x_e,y_{e'})K_{a,1}^{-1}(x_{e'},y_{e}).\nonumber
\end{align}
The path $\gamma_t^*$ crosses edges $e\in W_{\eps}\times B_{\eps}$ which alternate between type $\eps=0$ and $\eps=1$ edges, i.e. forward and backtracking edges, respectively. We index these edges as follows
\begin{align}
x_{e_\eps}&=(1+i)\vec{e}_1+t'q_n\vec{e}_2+(0,2\eps-1),\\
y_{e_\eps}&=(1+i)\vec{e}_1+t'q_n\vec{e}_2+(2\eps-1,0)
\end{align}
where $i \in 2\Z$ runs from $|t'q_n|$ to $n(1+\xi_c)+\alpha p_n$, and $\eps\in \{0,1\}$. Similarly,
\begin{align}
x_{e_{\eps'}'}&=(1+j)\vec{e}_1+t'q_n\vec{e}_2+(0,2\eps'-1),\\
y_{e_{\eps'}'}&=(1+j)\vec{e}_1+t'q_n\vec{e}_2+(2\eps'-1,0)
\end{align}
where $j \in 2\Z$ runs from $|t'q_n|$ to $n(1+\xi_c)+\alpha p_n$, and $\eps'\in \{0,1\}$. 

Introduce the notation $x_{e_{\eps'}}:=x_{e_{\eps'}'}$, $y_{e_{\eps'}}:=y_{e_{\eps'}'}$, i.e. we consider $e_{\eps'}$ as labelling the edge $e_{\eps'}'$.
We have $K_{a,1}(e)=ai$, recall Lemmas \ref{boundK^-1}, \ref{B*smallmaindiag}, we also have $\sigma_{e_\eps}\sigma_{e_{\eps'}}=(-1)^{\eps+\eps'}$ so we get
\begin{align}
\var_{Az}(h^*(t'))=-16a^2\sum_{\eps,\eps'\in\{0,1\}}(-1)^{\eps+\eps'}\sum_{i,j}\Big(\K_{1,1}(x_{e_{\eps}},y_{e_{\eps'}})\K_{1,1}(x_{e_{\eps'}},y_{e_{\eps}})\\-B_{\eps,\eps'}(x_{e_\eps},y_{e_{\eps'}})\K_{1,1}(x_{e_{\eps'}},y_{e_{\eps}})-B_{\eps',\eps}(x_{e_{\eps'}},y_{e_{\eps}})\K_{1,1}(x_{e_{\eps}},y_{e_{\eps'}})\\
B_{\eps,\eps'}(x_{e_\eps},y_{e_{\eps'}})B_{\eps',\eps}(x_{e_{\eps'}},y_{e_{\eps}})\Big)+o(1).\label{temphorithj1}
\end{align}
Label the coordinates of the vertices 
\begin{align}
&x_{e_{\eps}}=(x_{e_{\eps}}(1),x_{e_{\eps}}(2)),&&y_{e_{\eps}}=(y_{e_{\eps}}(1),y_{e_{\eps}}(2)),\\
&x_{e_{\eps'}}=(x_{e_{\eps'}}(1),x_{e_{\eps'}'}(2)), &&y_{e_{\eps'}}=(y_{e_{\eps'}}(1),y_{e_{\eps'}}(2)).
\end{align}
We first focus on showing 
\begin{align}
a^2\sum_{i,j}B_{\eps,\eps'}(x_{e_\eps},y_{e_{\eps'}})B_{\eps',\eps}(x_{e_{\eps'}},y_{e_{\eps}})\rarrow 0\label{temphorithj}
\end{align}
for $\eps,\eps'\in\{0,1\}$.
We use \eqref{Beps1eps2} in \eqref{temphorithj}, we see the following factor simplifies upon substitution
\begin{align}
i^{(x_{e_{\eps}}(2)-x_{e_{\eps}}(1)+x_{e_{\eps'}}(2)-x_{e_{\eps'}}(1)+y_{e_{\eps}}(1)-y_{e_{\eps}}(2)+y_{e_{\eps'}}(1)-y_{e_{\eps'}}(2))/2}=(-1)^{\eps+\eps'-1}.
\end{align}
So we have
\begin{align}
\label{quadbeps}B_{\eps,\eps'}(x_{e_\eps},y_{e_{\eps'}})B_{\eps',\eps}(x_{e_{\eps'}},y_{e_{\eps}})=\frac{(-1)^{\eps+\eps'+1}}{(2\pi i)^4}\int_{\Gamma_r}\frac{dw_1}{w_1}\int_{\Gamma_{r'}}\frac{dw_1'}{w_1'}\int_{\Gamma_{1/r}}dw_2\int_{\Gamma_{1/{r'}}}dw_2'\\\times\frac{V_{\eps,\eps'}(w_1,w_2)}{w_2-w_1}\frac{V_{\eps',\eps}(w_1',w_2')}{w_2'-w_1'}\frac{H_{x_{e_{\eps}}(1)+1,x_{e_{\eps}}(2)}(w_1)}{H_{y_{e_{\eps'}}(1),y_{e_{\eps'}}(2)+1}(w_2)}\frac{H_{x_{e_{\eps'}}(1)+1,x_{e_{\eps'}}(2)}(w_1')}{H_{y_{e_{\eps}}(1),y_{e_{\eps}}(2)+1}(w_2')}\nonumber
\end{align}
where we deformed the $w_1'$ contour from $\Gamma_r$ to $\Gamma_{r'}$, for an $r'$ such that $r'/2<r<r'<1$ and the $w_2'$ contour from $\Gamma_{1/r}$ to $\Gamma_{1/r'}$, where we note $1<1/r'<1/r<2/r'$.
By substitution in \eqref{Hfunc} we have 
\begin{align}
&\frac{H_{x_{e_{\eps}}(1)+1,x_{e_{\eps}}(2)}(w_1)}{H_{y_{e_{\eps}}(1),y_{e_{\eps}}(2)+1}(w_2')}\\&=\Big(\frac{w_1G(w_1)G(w_2'^{-1})}{w_2'G(w_1^{-1})G(w_2')}\Big)^{n/2}\Big(\frac{G(w_1^{-1})G(w_2')}{G(w_1)G(w_2'^{-1})}\Big)^{i/2}\Big(\frac{G(w_1)G(w_1^{-1})}{G(w_2')G(w_2'^{-1})}\Big)^{t'q_n/2}\frac{\big(G(w_1^{-1})G(w_2')\big)^{\eps}}{G(w_1)G(w_2'^{-1})}\nonumber
\end{align}
and a similar formula for the remaining quotient of $H$ functions. Hence we see we want to use the geometric sum formula on
\begin{align}
\sum_{i,j}\Big(\frac{G(w_1^{-1})G(w_2')}{G(w_1)G(w_2'^{-1})}\Big)^{i/2}\Big(\frac{G(w_1'^{-1})G(w_2)}{G(w_1')G(w_2^{-1})}\Big)^{j/2}.
\end{align}
Make the change of variables $i'=\frac{i-|t'q_n|}{2}$, $j'=\frac{j-|t'q_n|}{2}$. We then use the geometric sum formula in $i'$ and $j'$ separately, we see the appearance of 
\begin{align}
\frac{1-\Big(\frac{G(w_1^{-1})G(w_2')}{G(w_1)G(w_2'^{-1})}\Big)^{L}}{1-\Big(\frac{G(w_1^{-1})G(w_2')}{G(w_1)G(w_2'^{-1})}\Big)}\times\frac{1-\Big(\frac{G(w_1'^{-1})G(w_2)}{G(w_1')G(w_2^{-1})}\Big)^{L}}{1-\Big(\frac{G(w_1'^{-1})G(w_2)}{G(w_1')G(w_2^{-1})}\Big)}\label{geomsumresult}
\end{align}
where $L:=(n(1+\xi_c)+\alpha p_n-|t'q_n|)/2+1$. Expanding \eqref{geomsumresult} and distributing the quadruple contour integral across the four resulting factors, we see that contour integral coming from the term
\begin{align}
\frac{\Big(\frac{G(w_1^{-1})G(w_2')}{G(w_1)G(w_2'^{-1})}\Big)^{L}}{1-\Big(\frac{G(w_1^{-1})G(w_2')}{G(w_1)G(w_2'^{-1})}\Big)}\times\frac{\Big(\frac{G(w_1'^{-1})G(w_2)}{G(w_1')G(w_2^{-1})}\Big)^{L}}{1-\Big(\frac{G(w_1'^{-1})G(w_2)}{G(w_1')G(w_2^{-1})}\Big)}
\end{align}
is $O(e^{-c_1\alpha })$ via an application of the argument we used to show $X_n=O(e^{-c_1\alpha })$ in the proof of Proposition \ref{ExpectationProp}, in both sets of variables $(w_1,w_2)$, $(w_1',w_2')$. We focus on the quadruple contour integral coming from 
\begin{align}
\frac{1}{1-\Big(\frac{G(w_1^{-1})G(w_2')}{G(w_1)G(w_2'^{-1})}\Big)}\times\frac{1}{1-\Big(\frac{G(w_1'^{-1})G(w_2)}{G(w_1')G(w_2^{-1})}\Big)}.
\end{align}
Explicitly, this contour integral is 
\begin{align}\frac{(-1)^{\eps+\eps'+1}}{(2\pi i)^4}\int_{\Gamma_r}\frac{dw_1}{w_1}\int_{\Gamma_{1/r}}dw_2\int_{\Gamma_{r'}}\frac{dw_1'}{w_1'}\int_{\Gamma_{1/r'}}dw_2' \frac{\mathcal{V}\times\mathcal{A}\times\mathcal{B}}{(w_2-w_1)(w_2'-w_1)(w_2'-w_1')(w_2'+w_1)(w_2+w_1')}\label{1term6poles}
\end{align}
where 
\begin{align}
\mathcal{V}:=V_{\eps,\eps'}(w_1,w_2)V_{\eps',\eps}(w_1',w_2'),
\end{align}
\begin{align}
\mathcal{A}:=&\Big(\frac{w_1w_1'G(w_1)G(w_1')G(w_2^{-1})G(w_2'^{-1})}{w_2w_2'G(w_1^{-1})G(w_1'^{-1})G(w_2)G(w_2')}\Big)^{n/2}\\&\times\Big(\frac{G(w_1)G(w_1')G(w_1^{-1})G(w_1'^{-1})}{G(w_2)G(w_2')G(w_2^{-1})G(w_2'^{-1})}\Big)^{t'q_n/2}\nonumber\\
&\times\Big(\frac{G(w_1^{-1})G(w_1'^{-1})G(w_2)G(w_2')}{G(w_1)G(w_1')G(w_2^{-1})G(w_2'^{-1})}\Big)^{|t'q_n|/2}\nonumber
\end{align}
and
\begin{align}
\mathcal{B}:=\frac{(G(w_1^{-1})G(w_2'))^\eps(w_2'-w_1)(w_2'+w_1)}{G(w_1)G(w_2'^{-1})-G(w_1^{-1})G(w_2')}\times\frac{(G((w_1'^{-1})G(w_2))^{\eps'}(w_2-w_1')(w_2+w_1')}{G(w_1')G(w_2^{-1})-G(w_1'^{-1})G(w_2)}.
\end{align}
Observe that by Lemma \ref{doublepolelemma}, $\mathcal{B}$ is analytic in an annulus $B_R(0)\setminus B_{1/R}(0)$ in each variable $w_1,w_1',w_2,w_2'$, for each $a>0$ small. Hence we see six simple poles in the integrand of \eqref{1term6poles}.  Similar to the proofs of Propositions \ref{backtrackingforheightfn} and \ref{ExpectationProp}, we want to deform the $w_1,w_1'$ contours over these poles. To do so, we introduce the following notation, let
\begin{align}
\mathcal{G}:=\frac{1}{w_1w_1'}\frac{\mathcal{V}\times\mathcal{A}\times\mathcal{B}}{(w_2-w_1)(w_2'-w_1)(w_2'-w_1')(w_2'+w_1)(w_2+w_1')},
\end{align}
and define
\begin{align}
&\mathcal{G}\substack{w\rarrow z\\\cdot}:=(2\pi i )\lim_{w\rarrow z}-(w-z) \ \mathcal{G}, \label{temp3rgfdann}\\&\mathcal{G}\substack{w\rarrow z\\w'\rarrow z'}:=(2\pi i )^2\lim_{\substack{w\rarrow z\\w'\rarrow z'}}(w-z)(w'-z') \ \mathcal{G}
\end{align}
where in general we view the arrows $\substack{w\rarrow z\\\cdot}$ as operators applied to some function $\mathcal{G}$.
Consider the integral in \eqref{1term6poles}, deforming the $w_1'$ contour from $\Gamma{r'}$ to $\Gamma_{2/{r'}}$, and then deforming the $w_1$ contour from $\Gamma_r$ to $\Gamma_{2/{r'}}$ in all of the resulting integrals, the integral in \eqref{1term6poles} is
\begin{align}
&\int_{\Gamma_r}dw_1\int_{\Gamma_{1/r}}dw_2\int_{\Gamma_{r'}}dw_1'\int_{\Gamma_{1/r'}}dw_2' \ \mathcal{G}\nonumber\\
&=\int_{\Gamma_r}dw_1\int_{\Gamma_{1/r}}dw_2\int_{\Gamma_{2/{r'}}}dw_1'\int_{\Gamma_{1/r'}}dw_2' \ \mathcal{G}\nonumber\\
&\quad +\int_{\Gamma_r}dw_1\int_{\Gamma_{1/r}}dw_2\int_{\Gamma_{1/r'}}dw_2' \ \mathcal{G}\substack{w_1'\rarrow -w_2\\\cdot}+\mathcal{G}\substack{w_1'\rarrow w_2\\\cdot}+\mathcal{G}\substack{w_1'\rarrow w_2'\\\cdot}\nonumber\\
&\label{tempG1}=\int_{\Gamma_{2/r'}}dw_1\int_{\Gamma_{1/r}}dw_2\int_{\Gamma_{2/{r'}}}dw_1'\int_{\Gamma_{1/r'}}dw_2' \ \mathcal{G}\\\label{tempG2}&\quad+\int_{\Gamma_{2/r'}}dw_1\int_{\Gamma_{1/r}}dw_2\int_{\Gamma_{1/r'}}dw_2'\ \mathcal{G}\substack{w_1'\rarrow -w_2\\\cdot}+\mathcal{G}\substack{w_1'\rarrow w_2\\\cdot}+\mathcal{G}\substack{w_1'\rarrow w_2'\\\cdot}\\&
\quad\label{tempG3}+\int_{\Gamma_{1/r}}dw_2\int_{\Gamma_{2/r'}}dw_1'\int_{\Gamma_{1/r'}}dw_2' \ \mathcal{G}\substack{w_1\rarrow -w_2'\\\cdot}+\mathcal{G}\substack{w_1\rarrow w_2'\\\cdot}+\mathcal{G}\substack{w_1\rarrow w_2\\\cdot}\\
&\quad\label{tempG4}+\int_{\Gamma_{1/r}}dw_2\int_{\Gamma_{1/r'}}dw_2' \ \mathcal{G}\substack{w_1'\rarrow -w_2\\w_1\rarrow -w_2'}+\mathcal{G}\substack{w_1'\rarrow -w_2\\w_1\rarrow w_2'}+\mathcal{G}\substack{w_1'\rarrow -w_2\\w_1\rarrow w_2}\\
&\qquad\qquad\qquad\qquad\qquad+\mathcal{G}\substack{w_1'\rarrow w_2\\w_1\rarrow -w_2'}+\mathcal{G}\substack{w_1'\rarrow w_2\\w_1\rarrow w_2'}+\mathcal{G}\substack{w_1'\rarrow w_2\\w_1\rarrow w_2}\nonumber\\
&\qquad\qquad\qquad\qquad\qquad+\mathcal{G}\substack{w_1'\rarrow w_2'\\w_1\rarrow -w_2'}+\mathcal{G}\substack{w_1'\rarrow w_2'\\w_1\rarrow w_2'}+\mathcal{G}\substack{w_1'\rarrow w_2'\\w_1\rarrow w_2}.\nonumber
\end{align}
Now, the integral in \eqref{tempG1} is $O(e^{-Cn})$ by a similar argument to showing $R_n=O(e^{-cn})$ in the proof of Proposition \ref{backtrackingforheightfn} applied in both sets of variables $(w_1,w_2), (w_1',w_2')$. Next, the integrals in lines \eqref{tempG2}, \eqref{tempG3} are also $O(e^{-C'n})$, we show this for the first term in \eqref{tempG2}. Taking the limit in $\mathcal{G}\substack{w_1'\rarrow -w_2\\\cdot}$  we see 
\begin{align}
\lim_{w_1'\rarrow -w_2}\mathcal{A}=\Big(\frac{w_1G(w_1)G(w_2'^{-1})}{w_2'G(w_1'^{-1})G(w_2')}\Big)^{n/2}\Big(\frac{G(w_1')G(w_1'^{-1})}{G(w_2')G(w_2'^{-1})}\Big)^{t'q_n/2}\nonumber\Big(\frac{G(w_1^{-1})G(w_2')}{G(w_1)G(w_2'^{-1})}\Big)^{|t'q_n|/2}\nonumber
\end{align} 
via \eqref{Gsymmetries}, this is $O(e^{-C'n})$ again by the same argument we used to show $R_n=O(e^{-cn})$  in Proposition \ref{backtrackingforheightfn} applied to the variables $(w_1,w_2')$. Now consider the double integral in \eqref{tempG4}. In each of the terms $\mathcal{G}\substack{\cdot\rarrow \cdot\\\cdot\rarrow \cdot}$ appearing in \eqref{tempG4}, taking the limit in the corresponding $\mathcal{A}$ gives $\lim_{\substack{\cdot\rarrow \cdot\\\cdot\rarrow \cdot}}\mathcal{A}=1$ (recall that $n/2$ is even, as $n=4m$). From \eqref{relQ}, $V_{\eps_1,\eps_2}(w_1,-w_2)=(-1)^{-\eps_2-1}V_{\eps_1,\eps_2}(w_1,w_2)$, and considered as a function of $w_2'$, we see $\mathcal{B}(-w_2')=(-1)^{\eps+1}\mathcal{B}(w_2')$, hence, as a function of $w_2'$, $\mathcal{V}(-w_2')\mathcal{B}(-w_2')=\mathcal{V}(w_2')\mathcal{B}(w_2')$. Recall that in the proof of lemma \ref{Veps1eps2formula} we proved that $c^{-{\eps_1+\eps_2}/2}V_{\eps_1,\eps_2}(w_1,w_2)$ is analytic in $(a,w_1,w_2)\in  (B_\eps(0)\setminus \{0\})\times (B_R(0)\setminus B_{1/R}(0))^2\subset \C^3$, the same holds for $\mathcal{B}$, hence it holds for $\mathcal{V}\mathcal{B}$. Also observe that after taking limits in the nine terms $\mathcal{G}\substack{\cdot\rarrow \cdot\\\cdot\rarrow \cdot}$ in \eqref{tempG4}, the only poles in front of $\lim_{\substack{\cdot\rarrow \cdot\\\cdot\rarrow \cdot}}\mathcal{V}\mathcal{B}$ are of the form $1/(w_2'\pm w_2)$, $1/w_2$, $1/w_2'$. Since $|w_2'|\leq |w_2|$, of these poles the only ones inside the $w_2'$ integral are at zero. So we see we can Laurent expand all nine terms $\mathcal{G}\substack{\cdot\rarrow \cdot\\\cdot\rarrow \cdot}$ in \eqref{tempG4} in just the $a$ variable up to order $O(a)$ and that the coefficients of the expansion may only have $w_2'$-poles at $w_2'=0$ inside the $w_2'$ contour, (since we had that for any $R>1$, the coefficients are analytic for $w_2'\in B_R(0)\setminus B_{1/R}(0)$ when $\eps$ is small enough). Hence, up to $O(a)$, the value of $w_2'$ integral in \eqref{tempG4} is given by $2\pi i$ times the residue at $w_2'=0$ of the lower order coefficients in $a$. Observe that as a function of $w_2'$, 
\begin{align}
\mathcal{G}(-w_2')=-\frac{w_2'-w_1'}{w_2'+w_1'}\mathcal{G}(w_2').
\end{align}
Consider the first 3 terms in \eqref{tempG4} as a function of $w_2'$, and then take $w_2'\rarrow -w_2'$, the sum of these three terms is then
\begin{align}
&\Big(\mathcal{G}\substack{w_1'\rarrow -w_2\\w_1\rarrow -w_2'}+\mathcal{G}\substack{w_1'\rarrow -w_2\\w_1\rarrow w_2'}+\mathcal{G}\substack{w_1'\rarrow -w_2\\w_1\rarrow w_2}\Big)(-w_2')\nonumber\\
&=\Big(-\frac{w_2'-w_1'}{w_2'+w_1'}\mathcal{G}\Big)\substack{w_1'\rarrow -w_2\\w_1\rarrow w_2'}+\Big(-\frac{w_2'-w_1'}{w_2'+w_1'}\mathcal{G}\Big)\substack{w_1'\rarrow -w_2\\w_1\rarrow -w_2'}+\Big(-\frac{w_2'-w_1'}{w_2'+w_1'}\mathcal{G}\Big)\substack{w_1'\rarrow -w_2\\w_1\rarrow w_2}\nonumber\\
&=-\frac{w_2'+w_2}{w_2'-w_2}\Big(\mathcal{G}\substack{w_1'\rarrow -w_2\\w_1\rarrow w_2'}+\mathcal{G}\substack{w_1'\rarrow -w_2\\w_1\rarrow -w_2'}+\mathcal{G}\substack{w_1'\rarrow -w_2\\w_1\rarrow w_2}\Big)(w_2')\nonumber\\
&=-\frac{w_2'+w_2}{w_2'-w_2}\Big(\mathcal{G}\substack{w_1'\rarrow -w_2\\w_1\rarrow -w_2'}+\mathcal{G}\substack{w_1'\rarrow -w_2\\w_1\rarrow w_2'}+\mathcal{G}\substack{w_1'\rarrow -w_2\\w_1\rarrow w_2}\Big)(w_2'). \nonumber
\end{align}
Note that after the first equality, we view the arrows in \eqref{temp3rgfdann} as operators on a function $\mathcal{G}$, which in this equality is replaced by the function $-(w_2'-w_1')/(w_2'+w_1')\mathcal{G}$.
So we see that, as a function of $w_2'$,
\begin{align} 
(w_2+w_2')\Big(\mathcal{G}\substack{w_1'\rarrow -w_2\\w_1\rarrow -w_2'}+\mathcal{G}\substack{w_1'\rarrow -w_2\\w_1\rarrow w_2'}+\mathcal{G}\substack{w_1'\rarrow -w_2\\w_1\rarrow w_2}\Big)
\end{align}
is invariant under $w_2'\rarrow -w_2'$, so its coefficients residues at $w_2'=0$ are equal to zero. Hence the $w_2'$ integral of these first three terms is $O(a)$. By a similar argument, the next three terms transform as
\begin{align}
&\Big(\mathcal{G}\substack{w_1'\rarrow w_2\\w_1\rarrow -w_2'}+\mathcal{G}\substack{w_1'\rarrow w_2\\w_1\rarrow w_2'}+\mathcal{G}\substack{w_1'\rarrow w_2\\w_1\rarrow w_2}\Big)(-w_2')\nonumber\\
&=-\frac{w_2'-w_2}{w_2'+w_2}\Big(\mathcal{G}\substack{w_1'\rarrow w_2\\w_1\rarrow -w_2'}+\mathcal{G}\substack{w_1'\rarrow w_2\\w_1\rarrow w_2'}+\mathcal{G}\substack{w_1'\rarrow w_2\\w_1\rarrow w_2}\Big)(w_2').\nonumber
\end{align}
So
\begin{align}
(w_2-w_2')\Big(\mathcal{G}\substack{w_1'\rarrow w_2\\w_1\rarrow -w_2'}+\mathcal{G}\substack{w_1'\rarrow w_2\\w_1\rarrow w_2'}+\mathcal{G}\substack{w_1'\rarrow w_2\\w_1\rarrow w_2}\Big)
\end{align}
is also invariant under $w_2'\rarrow -w_2'$, which shows that the $w_2'$ integral of the next three terms also equals $O(a)$. Then finally, one checks that each term in
\begin{align}
\mathcal{G}\substack{w_1'\rarrow w_2'\\w_1\rarrow -w_2'}+\mathcal{G}\substack{w_1'\rarrow w_2'\\w_1\rarrow w_2'}+\mathcal{G}\substack{w_1'\rarrow w_2'\\w_1\rarrow w_2}
\end{align}
is invariant under $w_2'\rarrow -w_2'$, so the $w_2'$ integral of them is again $O(a)$.

Now we consider the integrals arising from the cross terms when expanding the brackets in \eqref{geomsumresult}, specifically we consider the term arising from
\begin{align}
\frac{-\Big(\frac{G(w_1^{-1})G(w_2')}{G(w_1)G(w_2'^{-1})}\Big)^{L}}{1-\Big(\frac{G(w_1^{-1})G(w_2')}{G(w_1)G(w_2'^{-1})}\Big)}\times\frac{1}{1-\Big(\frac{G(w_1'^{-1})G(w_2)}{G(w_1')G(w_2^{-1})}\Big)},
\end{align}
the other cross term goes to zero by a similar argument.
Define
\begin{align}
\mathcal{A}^*:=& \ \mathcal{A}\times (-1)\Big(\frac{G(w_1^{-1})G(w_2')}{G(w_1)G(w_2'^{-1})}\Big)^L\\
=& \ \Big(\frac{w_1'}{w_2}\Big)^{n/2}\Big(\frac{G(w_1')G(w_2^{-1})}{G(w_1'^{-1})G(w_2)}\Big)^{n/2}\Big(\frac{G(w_1')G(w_1'^{-1})}{G(w_2)G(w_2^{-1})}^{t'q_n/2}\Big(\frac{G(w_1'^{-1})G(w_2)}{G(w_1')G(w_2^{-1})}\Big)^{|t'q_n|/2}\nonumber\\
&\times(-1)\Big(\frac{w_1}{w_2'}\Big)^{n/2}\Big(\frac{G(w_1)G(w_2'^{-1})}{G(w_1^{-1})G(w_2')}\Big)^{-n\xi_c/2-\alpha p_n/2-1}\Big(\frac{G(w_1')G(w_1'^{-1}}{G(w_2)G(w_2^{-1})}\Big)^{t'q_n/2}\nonumber
\end{align}
and analogously
\begin{align}
\mathcal{G}^*=\frac{1}{w_1w_1'}\frac{\mathcal{V}\times\mathcal{A}^*\times\mathcal{B}}{(w_2-w_1)(w_2'-w_1)(w_2'-w_1')(w_2'+w_1)(w_2+w_1')}.
\end{align}
Explicitly this cross term gives rise to
\begin{align}
\frac{(-1)^{\eps+\eps'+1}}{(2\pi i)^4}\int_{\Gamma_r}dw_1\int_{\Gamma_{1/r}}dw_2\int_{\Gamma_{r'}}dw_1'\int_{\Gamma_{1/r'}}dw_2  \ \mathcal{G}^*.\label{temp34fdmnmn}
\end{align}
Deform the $w_2$ contour to $\Gamma_1$, this crosses the $w_2'$ contour, but no poles. Then deform the $w_1'$ contour to $\Gamma_{R+1}$, this crosses the simple poles at $\pm w_2$ and $w_2'$. Then, in all resulting integrals, deform the $w_1$, $w_2'$  contours to the contours $desc^{\eps,n}_{1/R}$ and $asc^{\eps,n}_R$ from the proof that $X_n=O(e^{-c\alpha })$ in Proposition \eqref{ExpectationProp}. The integral in \eqref{temp34fdmnmn} becomes
\begin{align}
\label{tempqudin1}&\int_{desc^{\eps,n}_{1/R}}dw_1\int_{\Gamma_1}dw_2\int_{\Gamma_{R+1}}dw_1'\int_{asc^{\eps,n}_{R}}dw_2'  \ \mathcal{G}^*\\&
+\int_{desc^{\eps,n}_{1/R}}dw_1\int_{\Gamma_1}dw_2\int_{asc^{\eps,n}_{R}}dw_2'  \ \mathcal{G}^*\substack{w_1'\rarrow w_2\\\cdot}+\mathcal{G}^*\substack{w_1'\rarrow -w_2\\\cdot}.\label{tempqudin3}\\
&+\int_{desc^{\eps,n}_{1/R}}dw_1\int_{\Gamma_1}dw_2\int_{asc^{\eps,n}_{R}}dw_2'  \ \mathcal{G}^*\substack{w_1'\rarrow w_2'\\\cdot}\label{tempqudin2}
\end{align}
For large enough $n$, the contour $\Gamma_1$ is at minimum distance $C/(an)^{1/3}$ from both $desc^{\eps,n}_{1/R}$ and $asc^{\eps,n}_R$ by basic trigonometry.  The integral in \eqref{tempqudin1} is $O(e^{-Cn})$ by combining the argument we used to show $R_n=O(e^{-cn})$ in the proof of Proposition \ref{backtrackingforheightfn} applied in the variables $(w_1',w_2)$ and the argument  we used to show $X_n=O(e^{-c_1\alpha })$ in the proof of Proposition \ref{ExpectationProp}, applied in the variables $(w_1,w_2')$. The integral in \eqref{tempqudin3} is $O(e^{-c\alpha })$ again by the argument  we used to show $X_n=O(e^{-c_1\alpha })$, applied in the variables $(w_1,w_2')$. In \eqref{tempqudin2}, deform the $w_1$ contour to $\Gamma_{1/(R+2)}$, then the $w_2$ contour to $\Gamma_{1/(R+1)}$, then deform the $w_1$ contour back to  $desc^{\eps,n}_{1/R}$ crossing over the pole at $w_2$. The integral in \eqref{tempqudin2} becomes
\begin{align}
\int_{desc^{\eps,n}_{1/R}}dw_1\int_{\Gamma_{1/(R+1)}}dw_2\int_{asc^{\eps,n}_{R}}dw_2'  \ \mathcal{G}^*\substack{w_1'\rarrow w_2'\\\cdot}+\int_{\Gamma_{1/(R+1)}}dw_2\int_{asc_R^{\eps,n}}dw_2'\mathcal{G}^*\substack{w_1'\rarrow w_2'\\ w_1\rarrow w_2}\label{temp23r;adfla|}.
\end{align}
Looking at \eqref{temp23r;adfla|}, we see $|w_2|\leq |w_2'|$, so the first integral is $O(e^{-cn})$ by combining the argument we used to show $R_n=O(e^{-cn})$ in the proof of Proposition \ref{backtrackingforheightfn} applied in the variables $(w_2,w_2')$ and the argument  we used to show $X_n=O(e^{-c_1\alpha })$ in the proof of Proposition \ref{ExpectationProp}, applied in the variables $(w_1,w_2')$. For the second integral in \eqref{temp23r;adfla|}, deform the $w_2'$ contour back to $\Gamma_{1/r'}$, observe that
\begin{align}
\mathcal{G}^*\substack{w_1'\rarrow w_2'\\ w_1\rarrow w_2}=\mathcal{G}\substack{w_1'\rarrow w_2'\\ w_1\rarrow w_2}
\end{align}
and by an analogous argument (this time applied in $w_2$, instead of $w_2'$, since $|w_2|<|w_2'|$) we compute the $w_2$ integral to be $O(a)$ as we did for the integral in  \eqref{tempG4} (see the last term of the integral in \eqref{tempG4}).
Hence we have established
\begin{align}
\sum_{i,j}B_{\eps,\eps'}(x_{e_\eps},y_{e_{\eps'}})B_{\eps',\eps}(x_{e_{\eps'}},y_{e_{\eps}})\rarrow 0
\end{align}
from which \eqref{temphorithj} then follows trivially.

Next we show 
\begin{align}
\abs{a^2\sum_{i,j}\K_{1,1}^{-1}(x_{e_\eps},y_{e_{\eps'}})\K_{1,1}^{-1}(x_{\eps'},y_{\eps})}\rarrow 0.\label{temp34rdskjkj}
\end{align}
This is rather simple, since by Lemma
\ref{boundK^-1} the left hand side of \eqref{temp34rdskjkj} is less than or equal to 
\begin{align}
&a^2\sum_{\substack{i,j\\ \abs{i-j}\neq 2}}C\min(a,a^{\abs{j-i}/2-1})\min(a,a^{\abs{j-i}/2-1})+a^2\sum_{\substack{i,j\\\abs{i-j}=2}}C\nonumber\\&
\leq C'a^4n^2+C'a^2n\rarrow 0.\nonumber
\end{align}
\end{proof}
\end{proposition}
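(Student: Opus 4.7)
The plan is to express the variance as a double sum of two-point correlations, use the determinantal structure to write it in terms of the correlation kernel, then decompose via Theorem~\ref{ThmInvKast} and bound each resulting piece by a contour-integral analysis modelled on the expectation calculation in Proposition~\ref{ExpectationProp}.

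First, since the dimer process is determinantal with kernel $L(e,e')=K_{a,1}(e)K_{a,1}^{-1}(x_e,y_{e'})$, the covariance of two edge indicators is $-L(e,e')L(e',e)$. Writing $h(t')-(2|t'q_n|+1)=\sum_{e\in\gamma_{t'}}\sigma_e(4\ind_{e\in\omega}-1)$ and using $K_{a,1}(e)=ai$ for every $e\in\gamma_{t'}$, I obtain
\begin{align*}
\var_{Az}(h(t'))=16a^2\sum_{e,e'\in\gamma_{t'}}\sigma_e\sigma_{e'}K_{a,1}^{-1}(x_e,y_{e'})K_{a,1}^{-1}(x_{e'},y_e).
\end{align*}
Inserting the decomposition $K_{a,1}^{-1}=\K_{1,1}^{-1}-B_{\eps_1,\eps_2}+B^*_{\eps_1,\eps_2}$ and expanding, the $B^*$ contributions are $O(a^2n^2e^{-C_2n})=o(1)$ by Lemma~\ref{B*smallmaindiag}. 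Three substantive families of terms remain: the $\K_{1,1}^{-1}\cdot\K_{1,1}^{-1}$ pieces, the $B\cdot B$ pieces, and the mixed $\K_{1,1}^{-1}\cdot B$ cross terms.

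For the purely $\K_{1,1}^{-1}$ family, I apply Lemma~\ref{boundK^-1} to get $|\K_{1,1}^{-1}|\leq C\min(a,a^{|j-i|/2-1})$, so the double sum is at most $C(a^2n+a^4n^2)$, which vanishes since $a=n^{-1+\gamma}$ with $\gamma<1/3$. For the $B\cdot B$ family, I substitute the double contour representation~\eqref{Beps1eps2} and exchange the $(i,j)$ sum with the quadruple integral. The dependence on $i,j$ reduces to two independent geometric ratios of $G$'s, so I can apply the geometric sum formula in each of $i$ and $j$ to obtain factors $1/(1-R(w_1,w_2'))\cdot 1/(1-R(w_1',w_2))$ plus boundary terms carrying $L$-th powers, where $L\sim n/2$. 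The boundary (cross) terms are handled by the steepest-descent estimate used to bound $X_n$ in Proposition~\ref{ExpectationProp}, applied in the variable pairs $(w_1,w_2')$ and $(w_1',w_2)$; each factor contributes $O(e^{-c\alpha})$ or $O(e^{-cn})$. The leading ``main'' quadruple integral is then attacked by deforming the $w_1$ and $w_1'$ contours outward past the simple poles of $1/(G(w_1)G(w_2'^{-1})-G(w_1^{-1})G(w_2'))$ at $w_1=\pm w_2',\pm w_2$ (using Lemma~\ref{doublepolelemma}), and similarly for $w_1'$. The ``fully deformed'' remainder is exponentially small, and the nine double-residue terms surviving the contour deformations in $(w_1,w_1')$ are reduced to a $(w_2,w_2')$ double integral whose integrand is then analyzed for cancellations.

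The mixed cross terms are handled by a similar but lighter argument: one uses the $\K_{1,1}^{-1}$ bound of Lemma~\ref{boundK^-1} to control a single factor, leaving a scalar sum against a $B$-type double contour that one bounds by steepest descent as above.

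The main obstacle will be the $B\cdot B$ family. After the geometric sums and contour deformations, I expect the cancellations to emerge from the symmetry $w_2'\mapsto -w_2'$: using $V_{\eps_1,\eps_2}(w_1,-w_2)=(-1)^{\eps_2+1}V_{\eps_1,\eps_2}(w_1,w_2)$ from~\eqref{relQ} and the parity of the $\mathcal{B}$-block in~\eqref{Bepstemp51}-style factors, the sum of residues inside the $w_2'$ contour turns out to be odd in $w_2'$, so its residue at $w_2'=0$ vanishes, leaving only an $O(a)$ remainder. Laurent-expanding in $a$ about $c=0$ (justified by the analyticity established in Lemma~\ref{Veps1eps2formula}) is the mechanism that makes this residue extraction rigorous. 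Keeping all signs, orientations, and pole structures consistent through the four-variable contour deformations will be the most delicate bookkeeping of the argument.
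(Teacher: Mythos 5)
Your proposal follows essentially the same route as the paper: decompose the variance via the determinantal structure and $K_{a,1}^{-1}=\K_{1,1}^{-1}-B+B^*$, discard $B^*$ by Lemma~\ref{B*smallmaindiag}, bound the $\K_{1,1}^{-1}\K_{1,1}^{-1}$ double sum by Lemma~\ref{boundK^-1}, convert the $B\cdot B$ double sum into a quadruple contour integral, take geometric sums in $i$ and $j$ producing both a ``main'' and a boundary ($L$-power) piece, and then control the main piece by deforming the $w_1,w_1'$ contours over the simple poles and exploiting a $w_2'\mapsto -w_2'$ symmetry. That is the paper's strategy.

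One step is stated incorrectly, and it is the one that actually makes the $B\cdot B$ sum small, so it is worth flagging. You assert that after the $(w_1,w_1')$ residue extraction, the remaining $w_2'$-integrand ``turns out to be odd in $w_2'$, so its residue at $w_2'=0$ vanishes.'' Oddness does not force a vanishing residue at $0$: the residue is the coefficient of $w_2'^{-1}$, which is an \emph{odd} power, so an odd function can perfectly well have a nonzero residue (e.g.\ $1/w_2'$). The mechanism in the paper is the opposite parity: one shows that suitable combinations of the nine residue terms, \emph{after} multiplying by the regular factor $(w_2\pm w_2')$, are \emph{invariant} under $w_2'\mapsto -w_2'$ (i.e.\ even). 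An even Laurent series has no $w_2'^{-1}$ term, so the residue at $0$ of the product vanishes, and since $(w_2\pm w_2')$ has no zero inside $\Gamma_{1/r'}$ (recall $|w_2'|<|w_2|$), the residue of the original combination also vanishes; what survives is $O(a)$ from the higher-order coefficients of the Laurent expansion in $a$. This even/odd distinction is exactly the content of the three-term groupings in the paper's treatment of \eqref{tempG4}, and you would need it correctly to carry the argument through. A smaller point of bookkeeping: the relevant poles in $w_1$ come both from the explicit factors $(w_2-w_1)(w_2'\pm w_1)$ and from the $G$-denominator (via Lemma~\ref{doublepolelemma}), not all from the $G$-denominator as your sketch suggests, and the geometric sums in $i,j$ couple the pairs $(w_1,w_2')$ and $(w_1',w_2)$ crosswise — you do get this right, but it governs where the residue extraction happens, so it is worth keeping front of mind.

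On the mixed $\K_{1,1}^{-1}\cdot B$ cross terms: you address them (with a quick sketch — bound one $\K_{1,1}^{-1}$ factor and steepest-descend the other $B$ factor), whereas the paper's proof does not explicitly return to them after writing down \eqref{temphorithj1}. Your plan here is plausible and, if carried out, would actually be more complete than what appears in the paper; just be aware that the single-$B$ sum over one index has a structure closer to the expectation computation (Proposition~\ref{ExpectationProp}) than to the quadruple integral, so its bound $O(na)$ needs the extra factor of $a$ from the remaining $\K_{1,1}^{-1}$ bound to come out $o(1)$.
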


Now we have the information we need to prove Proposition \ref{expectheightprop}.
\begin{proof}[Proof of Proposition \ref{expectheightprop}]
In the notation of the current section, given a fixed collection $(t_1,...,t_j)\in \R^j$,
\begin{align}
\{\text{$a$-height along $\partial S^T_{\{t\}_j}=4m$}\}=\{h(t_1')=n,..., h(t_j')=n\}.
\end{align}
We prove that 
\begin{align}
\P_{Az}(\{h(t_1')=n,..., h(t_j')=n\}^c)\rarrow 0.
\end{align}
Observe that
\begin{align}
\P(\{h(t_1')=n,..., h(t_j')=n\}^c)\leq \sum_{i=1}^j\P_{Az}(\{h(t_i')=n\}^c),
\end{align}
so it suffices to show 
\begin{align}
\P_{Az}(\{h(t')=n\}^c)\rarrow 0
\end{align} for $t\in \{t_1,...,t_j\}$. Recall the $a$-height is a multiple of four, by Chebyshev's/Markov's inequality we have
\begin{align}
\P_{Az}(\{h(t')=n\}^c)&=\P_{Az}(\{\abs{h(t')-n}>2\})\\
&\leq \P_{Az}(\{\abs{h(t')-\E_{Az}[h(t')]}>1\})+\P_{Az}(\{\abs{\E_{Az}[h(t')]-n}>1\})\nonumber\\
&\leq \text{Var}_{Az}(h(t'))+\P_{Az}(\{\abs{\E_{Az}[h(t')]-n}>1\}).\nonumber
\end{align}
By Proposition \ref{ExpectationProp}, for all $n$ large enough we have $\{\abs{\E_{Az}[h(t')]-n}>1\}=\emptyset$, so by Proposition \ref{vargoestozero} the last upper bound tends to zero as $n$ tends to infinity.
\end{proof}
\newpage

\section{Miscellaneous lemmas}\label{Misclemmas}
Our first lemma allows us to rewrite the definition of $V_{\eps_1,\eps_2}(w_1,w_2)$ which appears in \cite{C/J} in a way which is easier to deal with.
Recall that $V$ is defined in \cite{C/J} as
\begin{align}
V_{\eps_1,\eps_2}(w_1,w_2)=\frac{1}{2}(Z_{\eps_1,\eps_2}(w_1,w_2)+(-1)^{\eps_2+1}Z_{\eps_1,\eps_2}(w_1,-w_2))\label{VinCJ}
\end{align}
where
\begin{align}
Z_{\eps_1,\eps_2}(w_1,w_2)=\sum_{\gamma_1,\gamma_2=0}^1Q_{\gamma_1,\gamma_2}^{\eps_1,\eps_2}(w_1,w_2)
\end{align}
and $Q$ is defined as follows.
For the moment,  we consider general weights $a,b>0$. Let 
\begin{align}
f_{a,b}(u,v)=(2a^2uv+2b^2uv-ab(-1+u^2)(-1+v^2))(2a^2uv+2b^2uv+ab(-1+u^2)(-1+v^2))\label{fabCJ}
\end{align}
and define the following rational functions
\begin{align}
&y_{0,0}^{0,0}(a,b,u,v)=\frac{1}{4(a^2+b^2)^2f_{a,b}(u,v)}(2a^7u^2v^2-a^5b^2(1+u^4+u^2v^2-u^4v^2+v^4-u^2v^4)\nonumber\\
&\quad\qquad-a^3b^4(1+3u^2+3v^2+2u^2v^2+u^4v^2+u^2v^4-u^4v^4)-ab^6(1+v^2+u^2+3u^2v^2)),\nonumber\\
&y_{0,1}^{0,0}(a,b,u,v)=\frac{a(b^2+a^2u^2)(2a^2v^2+b^2(1+v^2-u^2+u^2v^2))}{4(a^2+b^2)f_{a,b}(u,v)},\nonumber\\
&y_{1,0}^{0,0}(a,b,u,v)=\frac{a(b^2+a^2v^2)(2a^2u^2+b^2(1-v^2+u^2+u^2v^2))}{4(a^2+b^2)f_{a,b}(u,v)},\nonumber\\
&y_{1,1}^{0,0}(a,b,u,v)=\frac{a(2a^2u^2v^2+b^2(-1+v^2+u^2+u^2v^2))}{4f_{a,b}(u,v)}.\nonumber
\end{align}
For $i,j\in\{0,1\}$ define 
\begin{align}
&y_{i,j}^{0,1}(a,b,u,v)=\frac{y_{i,j}^{0,0}(b,a,u,v^{-1})}{v^2},\nonumber \\
&y_{i,j}^{1,0}(a,b,u,v)=\frac{y_{i,j}^{0,0}(b,a,u^{-1},v)}{u^2},\nonumber\\
&y_{i,j}^{1,1}(a,b,u,v)=\frac{y_{i,j}^{0,0}(a,b,u^{-1},v^{-1})}{u^2v^2}.\nonumber
\end{align}
When $b=1$, write $y_{i,j}^{\eps_1,\eps_2}(a,1,u,v)=y_{i,j}^{\eps_1,\eps_2}(u,v)$. Set
\begin{align}
x_{\gamma_1,\gamma_2}^{\eps_1,\eps_2}(w_1,w_2)=\frac{G(w_1)G(w_2)(1-w_1^2w_2^2)}{\prod_{j=1}^2\sqrt{w_j^2+2c}\sqrt{w_j^{-2}+2c}}y_{\gamma_1,\gamma_2}^{\eps_1,\eps_2}(G(w_1),G(w_2)),
\end{align} 
and so finally,
\begin{align}
Q_{\gamma_1,\gamma_2}^{\eps_1,\eps_2}(w_1,w_2)=(-1)^{\eps_1+\eps_2+\eps_1\eps_2+\gamma_1(1+\eps_2)+\gamma_2(1+\eps_1)}s(w_1)^{\gamma_1}s(w_2^{-1})^{\gamma_2}G(w_1)^{\eps_1}G(w_2^{-1})^{\eps_2}x_{\gamma_1,\gamma_2}^{\eps_1,\eps_2}(w_1,w_2^{-1}).\label{QCJ}
\end{align}
We first give a lemma regarding $f_{a,1}$.
\begin{lemma}\label{fabsimp}
Let $\eps_1,\eps_2\in\{0,1\}$. We have
\begin{align}
f_{a,1}(G(w_1)^{1-2\eps_1},G(w_2)^{1-2\eps_2})=4(1+a^2)^2(1-w_1^2w_2^2)G(w_1)^{2-4\eps_1}G(w_2)^{2-4\eps_2}.
\end{align}
\begin{proof}
Observe that \eqref{fabCJ} is written in the form $(A-B)(A+B)$, where $A=2a^2uv+2b^2uv$, $B=ab(-1+u^2)(-1+v^2)$. Expand these brackets and then rewrite it to get
\begin{align}
f_{a,1}(u,v)=4(a^2+1)^2u^2v^2(1-\frac{c^2}{4}(u-1/u)^2(v-1/v)^2).
\end{align}
By the invariance of $u\mapsto (u-1/u)^2$ under $u\rarrow 1/u$ we have
\begin{align}
f_{a,1}(u^{1-2\eps_1},v^{1-2\eps_2})=4(1+a^2)^2(1-\frac{c^2}{4}(u-1/u)^2(v-1/v)^2)u^{2-4\eps_1}v^{2-4\eps_2}.
\end{align}
Now let $u=G(w_1)$, $v=G(w_2)$ and note that $G$ is the inverse of $u\mapsto \sqrt{c/2}(u-1/u)$.
\end{proof}
\end{lemma}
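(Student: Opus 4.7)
The plan is to exploit the fact that $f_{a,1}(u,v)$ factors as a difference of squares and then to use a defining algebraic identity for $G$ that eliminates the parameter $c$ from the final answer. First I would rewrite $f_{a,b}$ as $(A-B)(A+B) = A^2 - B^2$ with $A = 2(a^2+b^2)uv$ and $B = ab(u^2-1)(v^2-1)$, so that at $b=1$ one obtains
\begin{equation*}
f_{a,1}(u,v) = 4(a^2+1)^2 u^2 v^2 - a^2 (u^2-1)^2 (v^2-1)^2.
\end{equation*}
Factoring out $4(a^2+1)^2 u^2 v^2$ and using $c = a/(1+a^2)$, so that $a^2/(4(a^2+1)^2) = c^2/4$, and $(u^2-1)^2/u^2 = (u - 1/u)^2$, gives
\begin{equation*}
f_{a,1}(u,v) = 4(1+a^2)^2 u^2 v^2 \Bigl( 1 - \tfrac{c^2}{4}(u - 1/u)^2(v - 1/v)^2 \Bigr).
\end{equation*}

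Next I would use the key symmetry that $(u-1/u)^2$ is invariant under $u \mapsto 1/u$. Since $u^{1-2\eps_1}$ is either $u$ (when $\eps_1 = 0$) or $1/u$ (when $\eps_1 = 1$), the inner bracket is unchanged after substituting $u \mapsto u^{1-2\eps_1}$, $v \mapsto v^{1-2\eps_2}$, while the prefactor $u^2 v^2$ becomes $u^{2-4\eps_1} v^{2-4\eps_2}$. This reduces the claim to checking that, with $u = G(w_1)$ and $v = G(w_2)$,
\begin{equation*}
\tfrac{c^2}{4}(G(w_1) - 1/G(w_1))^2 (G(w_2) - 1/G(w_2))^2 = w_1^2 w_2^2.
\end{equation*}

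The last identity is precisely a consequence of $G$ being the inverse of the map $u \mapsto \sqrt{c/2}\,(u - 1/u)$ recorded just after \eqref{Gfunction}: this gives $G(w) - 1/G(w) = w\sqrt{2/c}$, so each factor $\tfrac{c}{2}(G(w_j) - 1/G(w_j))^2$ equals $w_j^2$, and multiplying the two together yields $w_1^2 w_2^2$. Substituting back finishes the proof. There is no real obstacle here; the only thing to watch is the bookkeeping of the parity factors $1 - 2\eps_j$ and the exponents $2 - 4\eps_j$, which are handled uniformly in $\eps_1,\eps_2 \in \{0,1\}$ because the whole argument uses only the invariance of $(u - 1/u)^2$ under inversion.
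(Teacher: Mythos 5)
Your proof is correct and follows essentially the same route as the paper's: factor $f_{a,1}$ as a difference of squares, rewrite it as $4(1+a^2)^2 u^2 v^2\bigl(1-\tfrac{c^2}{4}(u-1/u)^2(v-1/v)^2\bigr)$, exploit the invariance of $(u-1/u)^2$ under $u\mapsto 1/u$, and then use that $G$ inverts $u\mapsto\sqrt{c/2}(u-1/u)$. The only difference is that you make the final step explicit by verifying $\tfrac{c}{2}(G(w)-1/G(w))^2=w^2$, which the paper leaves implicit.
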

By their definitions we have the relation
\begin{align}
\tilde{y}_{i,j}^{0,0}(a,b,u,v)=f_{a,b}(u,v)y_{i,j}^{0,0}(a,b,u,v),\nonumber
\end{align}
and since $f_{a,b}(u,v)=f_{b,a}(u,v)$,
\begin{align}
\tilde{y}_{i,j}^{0,0}(a,b,u,v)&=v^2f_{a,b}(u,v^{-1})y_{i,j}^{0,1}(a,b,u,v)=\tilde{y}_{i,j}^{0,0}(b,a,u,v^{-1})\nonumber\\
\tilde{y}_{i,j}^{1,0}(a,b,u,v)&=u^2f_{a,b}(u^{-1},v)y_{i,j}^{1,0}(a,b,u,v)=\tilde{y}_{i,j}^{0,0}(b,a,u^{-1},v)\nonumber\\
\tilde{y}_{i,j}^{1,1}(a,b,u,v)&=u^2v^2f_{a,b}(u^{-1},v^{-1})y_{i,j}^{1,0}(a,b,u,v)=\tilde{y}_{i,j}^{0,0}(a,b,u^{-1},v^{-1}).\nonumber
\end{align}
Hence, 
\begin{align}
y_{\gamma_1,\gamma_2}^{\eps_1,\eps_2}(G(w_1),G(w_2))=\frac{\tilde{y}_{\gamma_1,\gamma_2}^{\eps_1,\eps_2}(a,1,G(w_1),G(w_2))}{G(w_1)^{2\eps_1}G(w_2)^{2\eps_2}f_{a,1}(G(w_1)^{1-2\eps_1},G(w_2)^{1-2\eps_2})}.
\end{align}
Now it follows by lemma \ref{fabsimp} that
\begin{align}
x_{\gamma_1,\gamma_2}^{\eps_1,\eps_2}(w_1,w_2)=\frac{G(w_1)^{2\eps_1-1}G(w_2)^{2\eps_2-1}}{4(1+a^2)^2\prod_{j=1}^2\sqrt{w_j^2+2c}\sqrt{w_j^{-2}+2c}}\tilde{y}_{\gamma_1,\gamma_2}^{\eps_1,\eps_2}(G(w_1),G(w_2)).
\end{align}
By substituting this into \eqref{QCJ} we see we have our definition of $Q$ given by \eqref{Qours}.
Looking at \eqref{Qours}, observe that we have
\begin{align}
Q_{\gamma_1,\gamma_2}^{\eps_1,\eps_2}(w_1,-w_2)=(-1)^{3\eps_2-1}Q_{\gamma_1,\gamma_2}^{\eps_1,\eps_2}(w_1,w_2)=(-1)^{-\eps_2-1}Q_{\gamma_1,\gamma_2}^{\eps_1,\eps_2}(w_1,w_2)\label{relQ}
\end{align}
which follows by $\tilde{y}_{i,j}^{\eps_1,\eps_2}(u,-v)=\tilde{y}_{i,j}^{\eps_1,\eps_2}(u,v)$, the symmetries \eqref{squarerootsymmetries}, \eqref{Gsymmetries} and the fact that $(-1)^{3\eps_2-1}=(-1)^{-\eps_2-1}$. Using the relation \eqref{relQ} in \eqref{VinCJ} we arrive at the following lemma
\begin{lemma}\label{tempVequivlemma}
The formula for $V_{\eps_1,\eps_2}(w_1,w_2)$ given by \eqref{VinCJ} is equal to the formula given by \eqref{Vours}.
\end{lemma}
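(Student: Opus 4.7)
The plan has two parts: first match the new explicit formula \eqref{Qours} for $Q$ against the original Chhita--Johansson form \eqref{QCJ}, and then exploit a parity symmetry of $Q$ under $w_2\to -w_2$ that turns the symmetrised average \eqref{VinCJ} into the plain sum \eqref{Vours}.

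For the first part, the entire task reduces to simplifying $x^{\eps_1,\eps_2}_{\gamma_1,\gamma_2}(w_1,w_2)$. Starting from its defining expression via $y^{\eps_1,\eps_2}_{\gamma_1,\gamma_2}$, I would use the algebraic identity
\begin{align*}
\tilde y^{\eps_1,\eps_2}_{i,j}(a,b,u,v) = f_{a,b}\bigl(u^{1-2\eps_1},v^{1-2\eps_2}\bigr)\,u^{2\eps_1}v^{2\eps_2}\,y^{\eps_1,\eps_2}_{i,j}(a,b,u,v),
\end{align*}
verified case by case from the definitions of $\tilde y$ and $y$ together with the symmetry $f_{a,b}=f_{b,a}$, combined with Lemma \ref{fabsimp} which evaluates $f_{a,1}$ at $(G(w_1)^{1-2\eps_1},G(w_2)^{1-2\eps_2})$. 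Together these yield the clean form
\begin{align*}
x^{\eps_1,\eps_2}_{\gamma_1,\gamma_2}(w_1,w_2) = \frac{G(w_1)^{2\eps_1-1}G(w_2)^{2\eps_2-1}}{4(1+a^2)^2\prod_{j=1}^2\sqrt{w_j^2+2c}\sqrt{w_j^{-2}+2c}}\;\tilde y^{\eps_1,\eps_2}_{\gamma_1,\gamma_2}\bigl(G(w_1),G(w_2)\bigr).
\end{align*}
Substituting this (with $w_2\mapsto w_2^{-1}$) into \eqref{QCJ} and combining powers via $G(w_j^{\pm 1})^{\eps_j}G(w_j^{\pm 1})^{2\eps_j-1}=G(w_j^{\pm 1})^{3\eps_j-1}$ produces \eqref{Qours} term by term.

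For the second part, I would verify the symmetry \eqref{relQ},
\begin{align*}
Q^{\eps_1,\eps_2}_{\gamma_1,\gamma_2}(w_1,-w_2) = (-1)^{-\eps_2-1}\,Q^{\eps_1,\eps_2}_{\gamma_1,\gamma_2}(w_1,w_2),
\end{align*}
by tracking each $w_2$-dependent factor in \eqref{Qours}. By \eqref{squarerootsymmetries} the square-root factors $\sqrt{w_2^{\pm 2}+2c}$ are even; by \eqref{Gsymmetries} the prefactor $G(w_2^{-1})^{3\eps_2-1}$ contributes $(-1)^{3\eps_2-1}$. A direct inspection of the base polynomials $\tilde y^{0,0}_{i,j}$ shows they depend only on even powers of $v$, and the involutions $(a,b,u,v)\mapsto (b,a,u,v^{-1})$ etc.\ used to extend the definition preserve this evenness, so $\tilde y^{\eps_1,\eps_2}_{\gamma_1,\gamma_2}(G(w_1),G(w_2^{-1}))$ is invariant. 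The remaining factor $s(w_2^{-1})^{\gamma_2}$ must be tracked together with the explicit sign $(-1)^{\gamma_2(1+\eps_1)}$ to verify that the net dependence is $\gamma$-independent; using $(-1)^{3\eps_2-1}=(-1)^{-\eps_2-1}$ then gives \eqref{relQ}.

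The conclusion is then immediate: summing \eqref{relQ} over $\gamma_1,\gamma_2$ gives $Z_{\eps_1,\eps_2}(w_1,-w_2)=(-1)^{-\eps_2-1}Z_{\eps_1,\eps_2}(w_1,w_2)$, and plugging into the CJ definition \eqref{VinCJ} yields
\begin{align*}
V_{\eps_1,\eps_2}(w_1,w_2)=\tfrac{1}{2}\bigl(Z+(-1)^{\eps_2+1}(-1)^{-\eps_2-1}Z\bigr)=Z=\sum_{\gamma_1,\gamma_2=0}^{1}Q^{\eps_1,\eps_2}_{\gamma_1,\gamma_2}(w_1,w_2),
\end{align*}
which is exactly \eqref{Vours}. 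The main (mild) obstacle is the sign bookkeeping in the second step, since an uncancelled $(-1)^{\gamma_2}$ from $s(w_2^{-1})^{\gamma_2}$ would spoil the $\gamma$-independent form of \eqref{relQ} and break the collapse of the averaging; everything else is routine algebra.
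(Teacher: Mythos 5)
Your two-part structure mirrors the paper's proof exactly: first derive the clean form of $x^{\eps_1,\eps_2}_{\gamma_1,\gamma_2}$ via the identity $\tilde y = f\cdot y$ (rearranged) and Lemma~\ref{fabsimp}, substitute into \eqref{QCJ} to recover \eqref{Qours}; then establish the parity relation \eqref{relQ} and use it to collapse the symmetrised average \eqref{VinCJ} to the plain sum \eqref{Vours}. This is precisely the paper's argument.

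However, the one point you flag as a ``main (mild) obstacle'' is left unresolved, and the way you frame it contains a misreading. You write that ``by \eqref{squarerootsymmetries} the square-root factors $\sqrt{w_2^{\pm 2}+2c}$ are even,'' but \eqref{squarerootsymmetries} states $\sqrt{(-w)^2+2c}=-\sqrt{w^2+2c}$; the chosen branch makes $w\mapsto\sqrt{w^2+2c}$ \emph{odd}, not even. (The pair $\sqrt{w_2^2+2c}\,\sqrt{w_2^{-2}+2c}$ in the denominator is even only because both factors flip sign.) This same oddness is what dissolves your concern about $s(w_2^{-1})^{\gamma_2}$: since $s(w)=w\sqrt{w^{-2}+2c}$, we have $s(w_2^{-1})=w_2^{-1}\sqrt{w_2^2+2c}$, the product of two odd functions of $w_2$, hence \emph{even} under $w_2\to -w_2$. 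So $s(w_2^{-1})^{\gamma_2}$ is invariant and there is no stray $(-1)^{\gamma_2}$ to cancel. Also note that $(-1)^{\gamma_2(1+\eps_1)}$ is constant in $w_2$, so ``tracking it together'' with $s(w_2^{-1})^{\gamma_2}$ cannot help — the resolution must come, and does come, purely from the branch symmetry of the square root. With that correction your computation of \eqref{relQ} and the final collapse of the average are complete and correct.
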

We give the following formula for $B^*_{\eps_1,\eps_2}(a,x_1,x_2,y_1,y_2)$ from \cite{C/J}.
\begin{align}
B^*_{\eps_1,\eps_2}(a,x_1,x_2,y_1,y_2)=& \ \frac{i}{a}(-1)^{\eps_1+\eps_2}(B_{1-\eps_1,\eps_2}(1/a,2n-x_1,x_2,2n-y_1,y_2)\\
& +B_{\eps_1,1-\eps_2}(1/a,x_1,2n-x_2,y_1,2n-y_2))\nonumber\\&-B_{1-\eps_1,1-\eps_2}(a,2n-x_1,2n-x_2,2n-y_1,2n-y_2).\nonumber
\end{align}
These three integrals have the following representations (Corollary 2.4 in \cite{C/J}),
\begin{align}
\label{firstBstar}&\frac{1}{a}B_{1-\eps_1,\eps_2}(1/a,2n-x_1,x_2,2n-y_1,y_2)=-\frac{i^{\frac{x_1-x_2-y_1-y_2}{2}}}{(2\pi i)^2}\\&\int_{\Gamma_r}\frac{dw_1}{w_1}\int_{\Gamma_{1/r}}dw_2\frac{w_2}{w_2^2-w_1^2}\frac{H_{x_1+1,x_2}(w_1)}{H_{2n-y_1,y_2+1}(w_2)}\sum_{\gamma_1,\gamma_2=0}^1(-1)^{\eps_2+\gamma_2}Q_{\gamma_1,\gamma_2}^{\eps_1,\eps_2}(w_1,w_2),\nonumber\\
&\frac{1}{a}B_{\eps_1,1-\eps_2}(1/a,x_1,2n-x_2,y_1,2n-y_2)=-\frac{i^{\frac{y_2-y_1-x_2-x_1}{2}}}{(2\pi i)^2}\\&\int_{\Gamma_r}\frac{dw_1}{w_1}\int_{\Gamma_{1/r}}dw_2\frac{w_2}{w_2^2-w_1^2}\frac{H_{x_1+1,2n-x_2}(w_1)}{H_{y_1,y_2+1}(w_2)}\sum_{\gamma_1,\gamma_2=0}^1(-1)^{\eps_1+\gamma_1}Q_{\gamma_1,\gamma_2}^{\eps_1,\eps_2}(w_1,w_2)\nonumber
\end{align}
and
\begin{align}
&B_{1-\eps_1,1-\eps_2}(a,2n-x_1,2n-x_2,2n-y_1,2n-y_2)=-\frac{i^{\frac{y_2+y_1+x_2+x_1}{2}}}{(2\pi i)^2}\\&\int_{\Gamma_r}\frac{dw_1}{w_1}\int_{\Gamma_{1/r}}dw_2\frac{w_2}{w_2^2-w_1^2}\frac{H_{x_1+1,2n-x_2}(w_1)}{H_{2n-y_1,y_2+1}(w_2)}\sum_{\gamma_1,\gamma_2=0}^1(-1)^{\eps_1+\gamma_1+\eps_2+\gamma_2}Q_{\gamma_1,\gamma_2}^{\eps_1,\eps_2}(w_1,w_2).\nonumber
\end{align}
We simplify these three integrals for further asymptotic analysis. We follow the procedure in \cite{C/J} for this simplification. We see that the factor in the integrand of \eqref{firstBstar} is
\begin{align}
\frac{w_2}{w_2^2-w_1^2}=\frac{1}{2}\Big(\frac{1}{w_2-w_1}+\frac{1}{w_2+w_1}\Big).
\end{align}
Write the integral as a sum over these two terms, and then make the change of variables $w_2\rarrow -w_2$. We see that
\begin{align}
H_{2n-y_1,y_2+1}(-w_2)=(-1)^{(y_1+y_2+1)/2}H_{2n-y_1,y_2+1}(w_2)=(-1)^{\eps_2+1}H_{2n-y_1,y_2+1}(w_2)
\end{align}
where the last equality holds because $y_1+y_2 \text{ mod }4=2\eps_2+1$ when $(y_1,y_2)\in B_{\eps_2}$. We see this factor of $(-1)^{\eps_2+1}$ multiplies with same factor in \eqref{relQ} to be $1$, so we get \eqref{firstBstar} as
\begin{align}
\label{B2}&\frac{1}{a}B_{1-\eps_1,\eps_2}(1/a,2n-x_1,x_2,2n-y_1,y_2)=-\frac{i^{\frac{x_1-x_2-y_1-y_2}{2}}}{(2\pi i)^2}\\&\int_{\Gamma_r}\frac{dw_1}{w_1}\int_{\Gamma_{1/r}}dw_2\frac{1}{w_2-w_1}\frac{H_{x_1+1,x_2}(w_1)}{H_{2n-y_1,y_2+1}(w_2)}\sum_{\gamma_1,\gamma_2=0}^1(-1)^{\eps_2+\gamma_2}Q_{\gamma_1,\gamma_2}^{\eps_1,\eps_2}(w_1,w_2).\nonumber
\end{align}
A similar argument gives
\begin{align}
\label{B3}&\frac{1}{a}B_{\eps_1,1-\eps_2}(1/a,x_1,2n-x_2,y_1,2n-y_2)=-\frac{i^{\frac{y_2-y_1-x_2-x_1}{2}}}{(2\pi i)^2}\\&\int_{\Gamma_r}\frac{dw_1}{w_1}\int_{\Gamma_{1/r}}dw_2\frac{1}{w_2-w_1}\frac{H_{x_1+1,2n-x_2}(w_1)}{H_{y_1,y_2+1}(w_2)}\sum_{\gamma_1,\gamma_2=0}^1(-1)^{\eps_1+\gamma_1}Q_{\gamma_1,\gamma_2}^{\eps_1,\eps_2}(w_1,w_2)\nonumber
\end{align}
and
\begin{align}
\label{B4}&B_{1-\eps_1,1-\eps_2}(a,2n-x_1,2n-x_2,2n-y_1,2n-y_2)=-\frac{i^{\frac{y_2+y_1+x_2+x_1}{2}}}{(2\pi i)^2}\\&\int_{\Gamma_r}\frac{dw_1}{w_1}\int_{\Gamma_{1/r}}dw_2\frac{1}{w_2-w_1}\frac{H_{x_1+1,2n-x_2}(w_1)}{H_{2n-y_1,y_2+1}(w_2)}\sum_{\gamma_1,\gamma_2=0}^1(-1)^{\eps_1+\gamma_1+\eps_2+\gamma_2}Q_{\gamma_1,\gamma_2}^{\eps_1,\eps_2}(w_1,w_2).\nonumber
\end{align}
These are the integrals we perform our asymptotic analysis on.
We have a simple lemma on the fact that $B^*$ is small near the main diagonal of the lower left quadrant.
\begin{lemma} \label{B*smallmaindiag} Fix $\eps>0$ small, e.g. $\eps<1/4$.
Let $(x,y)\in W_{\eps_1}\times B_{\eps_2}$ be such that
\begin{align}
x=(x_1,x_2)=n(1+\xi)\vec{e}_1+\tilde{x}_1\vec{e}_1+\tilde{x}_2\vec{e}_2,\nonumber\\
y=(y_1,y_2)=n(1+\xi)\vec{e}_1+\tilde{y}_1\vec{e}_1+\tilde{y}_2\vec{e}_2\nonumber
\end{align}
where $|\tilde{x}_1|,|\tilde{x}_2|,|\tilde{y}_1|,|\tilde{y}_2|\leq n^{1-\eps}$ and $-1\leq \xi<0$. There are positive constants $C_1,C_2$ such that
\begin{align}
|B^*_{\eps_1,\eps_2}(a,x_1,x_2,y_1,y_2)|\leq C_1e^{-C_2n}
\end{align}
uniformly for $\xi, \tilde{x}_i,\tilde{y_i}$ for $a>0$ small enough. Moreover, there are positive constants $C_3,C_4$ such that
\begin{align}
\abs{B_{\eps_1,\eps_2}(a,x_1,x_2,y_1,y_2)}\leq C_3e^{-C_4n}.\label{temp23efgfgf}
\end{align}
uniformly for $\tilde{x}_i,\tilde{y_i}$, $-1/2+\eps<\xi<0$, and $a>0$ small enough
\begin{proof}
A similar proposition is proved for $a$ fixed in lemma 3.6 in \cite{C/J}. In our setting the argument is simpler, so we will be rather brief. Consider just the  terms with an exponential dependence on $n$ in the three integrals \eqref{B2}, \eqref{B3} and \eqref{B4} that make up $B^*$. They are
\begin{align}
\frac{H_{x_1+1,x_2}(w_1)}{H_{2n-y_1,y_2+1}(w_2)},&& \frac{H_{x_1+1,2n-x_2}(w_1)}{H_{y_1,y_2+1}(w_2)},&& \frac{H_{x_1+1,2n-x_2}(w_1)}{H_{2n-y_1,y_2+1}(w_2)}\label{405iktporefld}
\end{align}
where $|w_1|=r, |w_2|=1/r$ for a fixed $r\in (\sqrt{2c},1)$.
By substitution
\begin{align}
&\label{tmptads}\frac{H_{x_1+1,x_2}(w_1)}{H_{y_1,y_2+1}(w_2)}\\&=\Big(\frac{w_1}{w_2}\Big)^{n/2}\Big(\frac{G(w_1)G(1/w_2)}{G(1/w_1)G(w_2)}\Big)^{n/2}\frac{G(1/w_1)^{x_2/2}}{G(w_1)^{x_1/2}}\frac{G(w_2)^{y_1/2}}{G(1/w_2)^{y_2/2}}\frac{1}{G(w_1)^{1/2}G(1/w_2)^{1/2}}.\nonumber
\end{align}
Observe the factor of $G(1/w_1)^{x_2/2}G(w_2)^{y_1/2}$ in the numerator. The idea is that this factor is also small in the cases \eqref{405iktporefld}. We just check the first of the cases in \eqref{405iktporefld}. We can ignore the terms depending on $\tilde{x}_1,\tilde{x}_2,\tilde{y}_1,\tilde{y}_2$ and the terms with no exponential dependence since they will not impact the final bound. We have $x_i,y_i\sim n(1+\xi)$, and $2n-y_1\sim n(1-\xi)=n(1+\xi)-2n\xi$. Hence
\begin{align}
&\frac{H_{x_1+1,x_2}(w_1)}{H_{2n-y_1,y_2+1}(w_2)}\\
&\sim\Big(\frac{w_1}{w_2}\Big)^{n/2}\Big(\frac{G(w_1)G(1/w_2)}{G(1/w_1)G(w_2)}\Big)^{n/2}\frac{G(1/w_1)^{x_2/2}}{G(w_1)^{x_1/2}}\frac{G(w_2)^{(2n-y_1)/2}}{G(1/w_2)^{y_2/2}}\nonumber\\
&=\Big(\frac{w_1}{w_2}\Big)^{n/2}\Big(\frac{G(w_1)G(1/w_2)}{G(1/w_1)G(w_2)}\Big)^{n\xi/2}G(w_2)^{-n\xi}.\nonumber
\end{align}
We have $|G(w_2)^{-n\xi}|\leq Ca^{-n\xi/2}\leq C$ uniformly in $\xi,w_2$. We also have $|w_1|/|w_2|=r^2<1$, and by the expansion \eqref{tempGoverGexpans}, 
\begin{align}
\abs{\frac{G(w_1)G(1/w_2)}{G(1/w_1)G(w_2)}}^{-1}\leq \frac{|w_1|^2}{|w_2|^2}+aR(w_1,w_2,a)< 1
\end{align}
for $a>0$ small enough.
Hence we see
\begin{align}
&\abs{\frac{H_{x_1+1,x_2}(w_1)}{H_{2n-y_1,y_2+1}(w_2)}}\leq C_1'e^{-C_2'n}.
\end{align}
The remaining cases in \eqref{405iktporefld} follow similarly. Likewise, the case \eqref{temp23efgfgf} follows by applying a similar, simple argument to \eqref{tmptads}.
\end{proof}
\end{lemma}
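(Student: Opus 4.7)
The plan is to bound the three double contour integrals comprising $B^*$ (via its decomposition into $B_{1-\eps_1,\eps_2}$, $B_{\eps_1,1-\eps_2}$, $B_{1-\eps_1,1-\eps_2}$ with reflected coordinates, using the simplified representations \eqref{B2}--\eqref{B4}) and $B$ itself via \eqref{Beps1eps2}. In each case the integrand splits as $V_{\eps_1,\eps_2}(w_1,w_2)/(w_2-w_1)$ times an $H$-ratio. On the contours $|w_1|=r$, $|w_2|=1/r$ with $\sqrt{2c}<r<1$ fixed, Lemma \ref{Veps1eps2formula} bounds $V_{\eps_1,\eps_2}$ uniformly and $1/(w_2-w_1)$ is bounded since the contours do not cross. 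So the task reduces to showing each $H$-ratio is bounded by $C e^{-C'n}$.

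The key step is to substitute $x_i, y_i \sim n(1+\xi)$ into the definition \eqref{Hfunc} and isolate the factor $(w_1/w_2)^{n/2}$ of magnitude $r^n$ on the contours; this is the source of the exponential decay. The remaining factors are powers of $G(w_j^{\pm 1})$, which I would estimate using the expansion $G(w) = -\sqrt{a/2}/w + O(a^{3/2})$ from \eqref{tempGexps234}, giving $|G(w_1)|, |G(1/w_2)| \sim \sqrt{a/2}/r$ and $|G(1/w_1)|, |G(w_2)| \sim \sqrt{a/2}\,r$. For one of the three $B^*$-integrals (the others are analogous under symmetric coordinate exchanges), the reflection $y_1\mapsto 2n-y_1$ flips the exponent of $G(w_2)$ from $(y_1-n)/2$ to $(n-y_1)/2$, and after bookkeeping the $H$-ratio is bounded by $r^{n(1+\xi)}(a/2)^{-n\xi/2}$. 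Taking logarithms gives $n\log r + n\xi\log(r\sqrt{2/a})$; the first summand is strictly negative, the second is non-positive (since $\xi<0$ and $r\sqrt{2/a}>1$ for $a$ small), so the total is at most $r^n$ uniformly for $\xi \in [-1,0)$.

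For $B$ itself no reflections occur, and the analogous computation yields an $H$-ratio bound of $r^{n(1+2\xi)}$, which is exponentially small precisely when $1+2\xi>2\eps$, matching the hypothesis $\xi>-1/2+\eps$. The subexponential corrections from $\tilde x_i, \tilde y_i \leq n^{1-\eps}$ enter as additional $G$-factors with exponents of order $n^{1-\eps}$, contributing at most $\exp(Cn^{1-\eps}\log(1/a))$ to the final exponent, which is $o(n)$ and hence absorbed into $e^{-Cn}$. The main obstacle, while ultimately mechanical, is the careful bookkeeping of $G$-exponents to verify that after the anti-diagonal reflections defining $B^*$ the $G$-factors balance in a way that preserves (and at interior $\xi$ values even enhances) the decay from $(w_1/w_2)^{n/2}$, rather than overwhelming it by a compensating $r^{-cn}$ factor.
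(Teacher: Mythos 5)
Your proposal is correct and follows essentially the same approach as the paper's proof: both reduce to bounding the three $H$-ratios from \eqref{B2}--\eqref{B4} (and the one from \eqref{Beps1eps2} for $B$ itself), isolate the factor $(w_1/w_2)^{n/2}$ of magnitude $r^n$ as the source of exponential decay, and use the expansion $G(w)\sim -\sqrt{a/2}/w$ to check that the remaining $G$-powers do not undo it. Your explicit bound $r^{n(1+\xi)}(a/2)^{-n\xi/2}$ for the $B^*$ ratio and $r^{n(1+2\xi)}$ for $B$ matches a direct recomputation (the paper's displayed simplification contains a sign slip in the exponent, but its ultimate bound is the same), and your observation that the $\tilde{x}_i,\tilde{y}_i$ corrections contribute only $\exp(O(n^{1-\eps}\log(1/a)))=\exp(o(n))$ is a point the paper glosses over but is correct.
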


Next, we have the following bound, this bound in particular tells us that the probability of seeing an $a$-dimer in the smooth phase is less than $Ca^2$.

\begin{lemma}
\label{boundK^-1}
Assume 
\begin{align}
e_j=(\tilde{x}_j,\tilde{y}_j)\in W_{\eps_1(j)}\times B_{\eps_2(j)}
\end{align}
are two edges $j\in \{0,1\}$ in $AD$ with coordinates
\begin{align}
\tilde{x}_j=(1+\tilde{\alpha}_j)\vec{e}_1+\tilde{\beta}_j\vec{e}_2+(0,2\eps_1-1)\\
\tilde{y}_j=(1+\tilde{\alpha}_j)\vec{e}_1+\tilde{\beta}_j\vec{e}_2+(2\eps_2-1,0)
\end{align}
where $(\tilde{\alpha}_j,\tilde{\beta}_j)\in (2\Z)^2$ and $\eps_i(j)=:\eps_i$.
Let 
\begin{align}
\tilde{\beta}=\tilde{\beta}_j-\tilde{\beta}_i,&&\tilde{\alpha}=\tilde{\alpha}_j-\tilde{\alpha}_i.
\end{align}
Then 
\begin{align}
|\K_{1,1}^{-1}(\tilde{x_j},\tilde{y}_i)|\leq \begin{cases}C a^{(|\tilde{\beta}+\tilde{\alpha}|+|\tilde{\beta}-\tilde{\alpha}|)/4-1}&(\tilde{\alpha},\tilde{\beta})\neq (0,0)\\
Ca& \tilde{\alpha},\tilde{\beta}=0
\end{cases}
\end{align}
when $a>0$ is small.
\begin{proof}
\begin{align}
|\K_{1,1}^{-1}(\tilde{x_j},\tilde{y}_i)|&\leq C_1\abs{\int_{\Gamma_1}\frac{dw}{w}\frac{a^{\eps_2}G(w)^{|\ell_1|}G(1/w)^{|k_1|}}{\sqrt{w^2+2c}\sqrt{1/w^2+2c}}} +C_2\abs{\int_{\Gamma_1}\frac{dw}{w}\frac{a^{1-\eps_2}G(w)^{|\ell_2|}G(1/w)^{|k_2|}}{\sqrt{w^2+2c}\sqrt{1/w^2+2c}}}\\
&\leq C(a^{\eps_2}a^{\frac{1}{2}(|\ell_1|+|k_1|)}+a^{1-\eps_2}a^{\frac{1}{2}(|\ell_2|+|k_2|)})
\end{align}
when $a$ is small, by \eqref{Gexpans}
Define 
\begin{align}\label{sig1sig2}
\sig_1=
\begin{cases}
\text{sign}(\tilde{\alpha}+\tilde{\beta}), & \text{if }\tilde{\alpha}\neq -\tilde{\beta}\\
-1,& \text{if }\tilde{\alpha}=-\tilde{\beta} \text{ and } \eps_1=0\\
1,& \text{if }\tilde{\alpha}=-\tilde{\beta} \text{ and } \eps_1=1
\end{cases},
&&
\sig_2=
\begin{cases}
\text{sign}(\tilde{\alpha}-\tilde{\beta}), & \text{if }\tilde{\alpha}\neq \tilde{\beta}\\
1,& \text{if }\tilde{\alpha}=\tilde{\beta} \text{ and } \eps_2=0\\
-1,& \text{if }\tilde{\alpha}=\tilde{\beta} \text{ and } \eps_2=1
\end{cases}
\end{align}
so that 
\begin{align}
&|k_1|=\frac{|\tilde{\alpha}+\tilde{\beta}|}{2}+\sig_1(2\eps_1-1)(1-\eps_2), &&|\ell_1|=\frac{|\tilde{\alpha}-\tilde{\beta}|}{2}+\sig_2(1-\eps_2),\\
&|k_2|=\frac{|\tilde{\alpha}+\tilde{\beta}|}{2}+\sig_1\eps_2(2\eps_1-1), &&|\ell_2|=\frac{|\tilde{\alpha}-\tilde{\beta}|}{2}-\sig_2\eps_2
\end{align}
(for a proof see lemma 10 in \cite{Mas}). So
\begin{align}
|\K_{1,1}^{-1}(\tilde{x_j},\tilde{y}_i)|\leq Ca^{(|\tilde{\beta}+\tilde{\alpha}|+|\tilde{\beta}-\tilde{\alpha}|)/4}(a^{\eps_2+(1-\eps_2)(\sigma_1(2\eps_1-1)+\sigma_2)/2}+a^{1-\eps_2+(\eps_1(\sigma_1(2\eps_2-1)-\sigma_2))/2}).
\end{align}
One can then check
\begin{align}
\eps_2+(1-\eps_2)(\sigma_1(2\eps_1-1)+\sigma_2)/2\geq -1\\
1-\eps_2+(\eps_1(\sigma_1(2\eps_2-1)-\sigma_2))/2\geq -1
\end{align}
and the above two expressions equal one when $\tilde{\alpha}=\tilde{\beta}=0$.
\end{proof}
\end{lemma}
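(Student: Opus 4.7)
The plan is to bound $|\K_{1,1}^{-1}(\tilde x_j,\tilde y_i)|$ directly from its formula \eqref{K_11} by estimating each of the two $E_{k,\ell}$ terms separately. The essential ingredient is that on the unit contour $\Gamma_1$ the function $G$ is uniformly small: from the expansion \eqref{Gexpans} together with the identity $|G(w)|^2=G(w)G(1/w)$ valid on $|w|=1$ (where $\bar w=1/w$), one obtains $|G(w)|=|G(1/w)|=\sqrt{a/2}\,(1+O(a))$, while $|\sqrt{w^2+2c}\sqrt{1/w^2+2c}|=1+O(a)$ is bounded away from zero. Since $E_{k,\ell}=E_{|k|,|\ell|}$ is given by \eqref{EKL1} as an integral over $\Gamma_1$ of these factors, the triangle inequality yields $|E_{k,\ell}|\leq C\, a^{(|k|+|\ell|)/2}$ for all sufficiently small $a$. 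Substituting into \eqref{K_11} gives
\begin{align}
|\K_{1,1}^{-1}(\tilde x_j,\tilde y_i)|\leq C\bigl(a^{\eps_2+(|k_1|+|\ell_1|)/2}+a^{1-\eps_2+(|k_2|+|\ell_2|)/2}\bigr).\nonumber
\end{align}

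The remaining task is to evaluate $|k_s|,|\ell_s|$ in terms of $\tilde\alpha,\tilde\beta,\eps_1,\eps_2$ for $s=1,2$. I would substitute the coordinates $(x_1,x_2)=\tilde x_j$, $(y_1,y_2)=\tilde y_i$ into \eqref{klh1}--\eqref{klh2}, reducing each of $k_s,\ell_s$ to an expression of the form $\tfrac12(\tilde\alpha\pm\tilde\beta)+\mathrm{const}(\eps_1,\eps_2)$, and then take absolute values. The sign of $\tilde\alpha\pm\tilde\beta$ extracted from the absolute value naturally produces the auxiliary quantities $\sigma_1,\sigma_2$ defined in \eqref{sig1sig2}; the degenerate conventions chosen there in the cases $\tilde\alpha=\pm\tilde\beta$ are precisely what is needed for the resulting identities
\begin{align}
|k_1|=\tfrac12|\tilde\alpha+\tilde\beta|+\sigma_1(2\eps_1-1)(1-\eps_2),\quad |\ell_1|=\tfrac12|\tilde\alpha-\tilde\beta|+\sigma_2(1-\eps_2),\nonumber
\end{align}
and the two analogous expressions for $|k_2|,|\ell_2|$ to hold uniformly. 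This step is routine algebraic case-checking, which I would justify by citing Lemma~10 of \cite{Mas}.

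With these formulas in hand, both exponents in the upper bound take the form $(|\tilde\alpha+\tilde\beta|+|\tilde\alpha-\tilde\beta|)/4$ plus a constant depending only on $(\eps_1,\eps_2,\sigma_1,\sigma_2)$. To conclude the stated bound, I would verify that each of the two constants,
\begin{align}
\eps_2+\tfrac12(1-\eps_2)\bigl(\sigma_1(2\eps_1-1)+\sigma_2\bigr)\ \text{and}\ 1-\eps_2+\tfrac12\eps_1\bigl(\sigma_1(2\eps_2-1)-\sigma_2\bigr),\nonumber
\end{align}
is at least $-1$, which is a finite inspection over the sixteen combinations of $(\eps_1,\eps_2,\sigma_1,\sigma_2)\in\{0,1\}^2\times\{-1,+1\}^2$. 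In the special case $\tilde\alpha=\tilde\beta=0$ the values of $\sigma_1,\sigma_2$ are fixed by their degenerate convention in \eqref{sig1sig2}, and a direct substitution shows that both constants equal exactly $1$, giving the sharper bound $Ca$. There is no substantial analytic difficulty; the main obstacle is simply patient sign bookkeeping across the four $(\eps_1,\eps_2)$ cases and the signs of $\tilde\alpha\pm\tilde\beta$, which is why I would defer the detailed combinatorial verification to the referenced lemma.
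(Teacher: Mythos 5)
Your proposal is correct and follows essentially the same route as the paper's proof: bound $|\K_{1,1}^{-1}|$ via the two $E_{k_s,\ell_s}$ terms from \eqref{K_11}, estimate $|G(w)|=|G(1/w)|=\sqrt{a/2}\,(1+O(a))$ on $\Gamma_1$ to get $|E_{k,\ell}|\leq Ca^{(|k|+|\ell|)/2}$, substitute the formulas for $|k_s|,|\ell_s|$ from Lemma~10 of \cite{Mas}, and finish with a finite sign check over $(\eps_1,\eps_2,\sigma_1,\sigma_2)$. The only difference is that you make explicit the intermediate $E_{k,\ell}$ estimate, which the paper compresses into the single phrase ``by \eqref{Gexpans}''.
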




\begin{thebibliography}{99}


\bibitem{BCJ1} Beffara, V., Chhita, S., Johansson, K. : Airy point process at the liquid-gas boundary. Ann. Probab. 46 (2018), no. 5, 2973--3013

\bibitem{BCJ2} Beffara, V., Chhita, S., Johansson, K. : Local Geometry of the rough-smooth interface in the two-periodic Aztec diamond. arXiv:2004.14068

\bibitem{BDS} Baik, J., Deift, P., Suidan, T., Combinatorics and random matrix theory, Graduate studies in mathematics, vol. 172, American Mathematical Society, Providence, RI, 2016.

\bibitem{BD} Berggren, T., Duits, M.: Correlation functions for determinantal processes defined by infinite block Toeplitz minors. arXiv:1901.10877

\bibitem{C/J} Chhita, S., Johansson, K.: Domino statistics of the two-periodic Aztec diamond. Adv. Math., 294, 37-149 (2016)

\bibitem{CY} Chhita, S., Young, B.: Coupling functions for domino tilings of Aztec diamonds. Adv. Math. 259  173?251, (2014)


\bibitem{C.T}  Crstici, B., Tudor, G.:  Comp\'ements au Trait\' e de D. S. Mitronovi\v{c} (VII): Sur une in\' egalit\' e de D. S. Mitrinovi\v{c}. Publikacije Elektrotehni\v{c}kog Fakulteta. Serija Matematika I Fizika, (498/541), 153-154, (1975)

\bibitem{DLMF} DLMF. NIST Digital Library of Mathematical Functions. URL http://dlmf.nist.gov/

\bibitem{DK} Duits, M., Kuijlaars, A. B. J. : The two-periodic Aztec diamond and matrix valued orthogonal polynomials. J. Eur. Math. Soc. (JEMS) 23 (2021), no. 4, 1075--1131

\bibitem{EKS} Elkies, N., Kuperberg, G., Larsen, M. et al.: Alternating-Sign Matrices and Domino Tilings (Part I). Journal of Algebraic Combinatorics 1, 111-132 (1992)

\bibitem{Gam} Gamelin, T., Complex Analysis. Springer-Verlag, New York, (2000)

\bibitem{GM} Garnett, J., Marshall, D.: Harmonic Measure. Cambridge University Press, New York
(2005)

\bibitem{K}  Kenyon, R.: Local statistics of lattice dimers. Ann. Inst. H. Poincar\'e Probab. Statist., 33(5):591-618, (1997)

\bibitem{K.O.S} Kenyon, R., Okounkov, A., Sheffield, S.: Dimers and amoebae. Ann. of Math. (2), 163(3):1019-
1056, (2006)

\bibitem{J.M} Johansson, K., Mason, S., Dimer-dimer correlations at the rough-smooth boundary. arxiv preprint arXiv:2110.14505v2

\bibitem{Mar} Marshall, D., Complex analysis, Cambridge University Press (2019)

\bibitem{Mas} Mason, S., Two-periodic weighted dominos and the sine-Gordon field at the free fermion point: I. arXiv preprint arXiv:2209.11111.  (2022). 

\bibitem{Atlas} Myland, J., Oldham, K. and Spanier, J.  An Atlas of Functions. with Equator, the atlas function calculator, 2nd ed. New York: Springer.  (2009).

\bibitem{Tonin} Toninelli, F. Lectures Notes on the Dimer model. http://math.univ-lyon1.fr/homes-www/toninelli/noteDimeri.pdf

\end{thebibliography}
\end{document}